\documentclass[a4paper,11pt]{amsart}

\usepackage{amsmath,amssymb,amsthm,mathtools,hyperref}
\usepackage{color}
\usepackage{empheq}
\usepackage{mathrsfs}
\usepackage{mathtools}

\usepackage{array}
\usepackage{booktabs}
\usepackage{diagbox} 
\usepackage{indentfirst}
\usepackage{cite}   

\makeatletter
\renewcommand{\subsection}{\@startsection{subsection}{2}{\z@}%
	{-3.25ex\@plus -1ex \@minus -.2ex}%
	{1.5ex \@plus .2ex}%
	{\normalsize\centering}}
\makeatother

\makeatletter
\def\@seccntformat#1{\csname the#1\endcsname\quad}
\renewcommand\section{%
	\@startsection{section}{1}{\z@}%
	{-3.5ex \@plus -1ex \@minus -.2ex}%
	{2.3ex \@plus.2ex}%
	{\normalfont\normalsize\bfseries\centering}%
}
\makeatother

\numberwithin{equation}{section} \theoremstyle{plain}
\newtheorem{thm}{Theorem}[section]

\newtheorem{prop}[thm]{Proposition}
\newtheorem{lem}[thm]{Lemma}
\newtheorem{cor}[thm]{Corollary}
\newtheorem{rem}[thm]{Remark}

\makeatletter

\def\({\left(}
\def\){\right)}
\newcommand{\R}{\mathbb{R}}
\newcommand{\CC}{\mathbb{C}}

\DeclareMathOperator{\oi}{\mathbf{i}}

\DeclareMathOperator{\odiag}{\mathrm{diag}}

\makeatother
\title[SDP Feasibility Problems and sos Representation Ranks]{SDP Feasibility Problems and sos Representation Ranks for OT-FKM Type Isoparametric Polynomials}
\author[J. Q. Ge]{Jianquan Ge}
\address{School of Mathematical Sciences, Beijing Normal University, Beijing 100875, P. R. China}
\email{jqge@bnu.edu.cn}
\author[K. Jia]{Kai Jia$^{*}$}
\address{School of Mathematical Sciences, Beijing Normal University, Beijing 100875, P. R. China}
\email{kk@mail.bnu.edu.cn}
\author[Y. Y. Zhao]{Yuyang Zhao}
\address{School of Mathematical Sciences, Beijing Normal University, Beijing 100875, P. R. China}
\email{yyzhao24@mail.bnu.edu.cn}
\subjclass[2010]{53C40, 14P99, 90C22, 15A63.}
\thanks {$^{*}$ the corresponding author.}
\date{}
\keywords{isoparametric polynomials, sum of squares, semidefinite programming,  \emph{sos} representation ranks}
\thanks{J. Q. Ge is partially supported by the NSFC (No. 12571049) and the Fundamental Research Funds for the Central Universities.}
\begin{document}
\maketitle
	
	
\begin{abstract}
	Semidefinite programming (SDP) provides a fundamental framework for studying properties of sum-of-squares (sos) representations of nonnegative polynomials. In this paper we study quartic forms associated with OT-FKM type isoparametric polynomials with g = 4. We characterize the sos property of these forms in terms of the feasibility of an explicit SDP determined by the underlying Clifford system. In the sos cases, we further obtain quantitative rank bounds for sos representations, together with rigidity phenomena when m is at least 3.
\end{abstract}



\section{Introduction}\label{secintro}

A real polynomial $p(x)$ in $n$ variables is called \emph{positive semidefinite}
(\emph{psd} for short) or \emph{nonnegative} if $p(x)\ge 0$ for all $x\in\mathbb{R}^n$;
it is called a \emph{sum of squares} (\emph{sos}) if there exist real polynomials $p_k$
such that $p=\sum_k p_k^2$. Since any \emph{psd} or \emph{sos} polynomial can be made
homogeneous by adding one extra variable (preserving the \emph{psd}/\emph{sos} property),
it is convenient to work with homogeneous polynomials (forms).
For an even degree $d$, we denote by $P_{n,d}$ the cone of \emph{psd} forms of degree $d$
in $n$ variables, and by $\Sigma_{n,d}\subseteq P_{n,d}$ the cone of \emph{sos} forms.
Determining whether a given $p\in P_{n,d}$ belongs to $\Sigma_{n,d}$ is a central topic
in real algebraic geometry.

A central computational tool for \emph{sos} is \emph{semidefinite programming} (SDP).
Parrilo and Lall \cite{Lall03} introduced a powerful framework that converts \emph{sos}
questions into SDPs, and Papachristodoulou \emph{et al.} \cite{Lall09} further developed
algorithmic constructions based on this approach in stability problems for nonlinear systems
with time delays.

A semidefinite program is a convex optimization problem that, in its standard (primal) form,
can be written as
\[
\begin{aligned}
	& \underset{X \in SM(n)}{\emph{minimize}}
	& & \langle C, X \rangle \\
	& \emph{subject\ to}
	& & \langle A_i, X \rangle = b_i,\quad i=1,\ldots,m,\\
	& & & X \succeq 0,
\end{aligned}
\]
where $SM(n)$ denotes the space of real symmetric $n\times n$ matrices,
$\langle C,X\rangle=\mathrm{tr}(C^{T}X)$ is the matrix inner product,
and $A_i,C\in SM(n)$, $b_i\in\mathbb{R}$ are given.

The equalities $\langle A_i,X\rangle=b_i$ define an affine subspace of
$SM(n)$ and are therefore referred to as the \emph{affine constraints}.
An SDP is said to be \emph{feasible} if there exists a matrix
$X\in SM(n)$ satisfying these affine constraints together with the
semidefinite constraint $X\succeq0$; such a matrix $X$ is called a
\emph{feasible solution (or feasible matrix)} of the SDP.
In the present paper we will mainly deal with this feasibility problem.

The \emph{sos} property of a polynomial can be characterized via semidefinite programming.
Indeed, Proposition~\ref{sostoSDP} shows that a form $p(x)$ of degree $2d$ is \emph{sos}
if and only if there exists a symmetric matrix $S\succeq 0$ such that
\[
p(x)=z(x)^T S z(x), \qquad z(x):=\bigl(x^\alpha\bigr)_{|\alpha|\le d}.
\]
Beyond feasibility, this SDP viewpoint also encodes quantitative information on \emph{sos}
representations.  

If $p(x)$ admits an \emph{sos} representation $p(x)=\sum_{k=1}^{N} p_k(x)^2$,
the \emph{rank of the sos representation} is defined by
\[
r:=\dim \operatorname{span}\{p_1,\ldots,p_N\},
\]
that is, the number of linearly independent polynomials among the summands.
Under the SDP characterization in Proposition~\ref{sostoSDP}, such a representation corresponds to
a feasible matrix $S\succeq 0$, and the above rank $r$ coincides with $\mathrm{rank}(S)$.
In particular, the set of all possible ranks of \emph{sos} representations of $p$ can be read off
from the ranks of feasible solutions $S$.  We develop this correspondence systematically in
Subsection~\ref{Ranks via SDP}.

A particularly interesting class of structured \emph{psd} forms arises from
isoparametric geometry in spheres.
A function $f$ on a Riemannian manifold is called \emph{isoparametric} if
$|\nabla f|^2$ and $\Delta f$ are functions of $f$.
These two conditions imply that the regular level sets $M_t:=f^{-1}(t)$ form a family of
parallel hypersurfaces with constant mean curvature (\emph{cf.} \cite{Cecil15,GeQianTangYan25}).
In a unit sphere (more generally, a real space form), this is equivalent to the classical
``constant principal curvatures'' condition;  for background on the classification theory of isoparametric hypersurfaces and its applications,
see \cite{Cecil15,Cecil07,Chi11,Chi13,Chi20,Fa17,Mun,OT75,FKM81,GT23,Miyaoka13,Miyaoka16,
GeQianTangYan25,GT13,QT15,TangYan13,TangXieYan14,Solomon92} and references therein.

A fundamental result of M\"unzner \cite{Mun} asserts that an isoparametric hypersurface
$M\subset \mathbb{S}^{n-1}$ is (an open part of) a regular level set of an isoparametric function
$f=F|_{\mathbb{S}^{n-1}}$, where $F$ is a homogeneous polynomial on $\mathbb{R}^{n}$
satisfying the Cartan--M\"unzner equations
\begin{equation}\label{CMeq}
	\left\{\begin{array}{ll}
		|\nabla F|^2 =g^2|x|^{2g-2}, &\\[2pt]
		\Delta F=\frac{g^2}{2}(m_{-}-m_{+})|x|^{g-2},
	\end{array}
	\right. \quad x\in\mathbb{R}^{n},
\end{equation}
where $g=\deg(F)$ equals the number of distinct principal curvatures, and
$m_{\pm}$ are their multiplicities (with respect to the normal direction $\nabla f/|\nabla f|$).
Moreover, $g\in\{1,2,3,4,6\}$ \cite{Mun}; see also \cite{Fa17} for an independent proof.
The restriction $f=F|_{\mathbb{S}^{n-1}}$ satisfies $|\nabla f|^2=g^2(1-f^2)$ on $\mathbb{S}^{n-1}$,
so $\operatorname{Im}(f)=[-1,1]$.
For $t\in(-1,1)$, the level sets $f^{-1}(t)$ are isoparametric hypersurfaces in $\mathbb{S}^{n-1}$,
and the singular level sets
\[
M_{\pm}:=f^{-1}(\pm1)
\]
are smooth submanifolds of codimension $m_{\pm}+1$, called the \emph{focal submanifolds}.

Starting from an isoparametric polynomial $F$, Ge and Tang \cite{GT23} introduced the following
explicit \emph{psd} forms:
\begin{equation}\label{psdform}
	\left\{\begin{array}{ll}
		G_F^{\pm}(x):=|x|^g\pm F(x)\in P_{n,g}, & g \text{ even},\ g=2,4,6;\\[2pt]
		H_F(x):=|x|^{2g}-F(x)^2\in P_{n,2g},    & g=1,2,3,4,6.
	\end{array}
	\right.
\end{equation}
They completely classified the \emph{sos}/\emph{non-sos} behavior of \eqref{psdform}
for all possible degrees $g$ in accordance with the classification of isoparametric hypersurfaces.
In particular, $H_F$ is always \emph{sos}; this follows from Lagrange's identity,
Euler's formula, and the Cartan--M\"unzner equations \eqref{CMeq}.
For the forms $G_F^{\pm}$, the behavior depends on the degree $g$ and the associated multiplicity pair $(m_+,m_-)$.
In the quartic case $g=4$, the minus form $G_F^{-}=|x|^{4}-F(x)$
admits a direct \emph{sos} representation, whereas the main difficulty lies in the plus form $G_F^{+}$.

In this paper we study the \emph{sos} property and the possible ranks of \emph{sos} representations
 for the plus form
$G_F^{+}$, where $F$ is the quartic
isoparametric polynomial of OT-FKM type determined by a symmetric Clifford system on $\mathbb{R}^{2l}$
(with multiplicity pair $(m_+,m_-)=(m,l-m-1)$). For convenience, throughout the paper we work with
the normalized form
\[
G_F:=\frac{G_F^+}{2}=\frac{|x|^4+F(x)}{2},
\]
which is exactly the \emph{psd} quartic form in \eqref{nonnegativepolyG}.
Writing $n:=2l$, we thus work on $\mathbb{R}^n$ with $n$ even.

The choice of $G_F$ is also motivated by geometry.  Let $f=F|_{\mathbb{S}^{n-1}}$ be the
associated isoparametric function and let $M_\pm=f^{-1}(\pm 1)$ be the focal submanifolds.
Since $G_F=(1+f)/2$ on $\mathbb{S}^{n-1}$, we have
\[
G_F(x)=0\ \text{on}\ \mathbb{S}^{n-1}
\quad\Longleftrightarrow\quad
x\in M_-,
\]
and hence the zero set of $G_F$ in $\mathbb{R}^n$ is exactly the cone over $M_-$.
If $G_F$ is \emph{sos}, say $G_F=\sum_{j} q_j^2$ with quadratic forms $q_j$,
then each $q_j$ vanishes on $M_-$,  forcing the focal cone to be an intersection of finitely many quadrics.  This is closely related to Solomon's study of
quadratic focal varieties and their spectral consequences \cite{Solomon92}, where quadratic
forms vanishing on $M_\pm$ produce explicit Laplace eigenfunctions on the minimal isoparametric
hypersurfaces with eigenvalue $2n$.

Ge and Tang \cite{GT23} completely determined, for all isoparametric polynomials,
whether the associated forms in \eqref{psdform} are \emph{sos} or not.
In particular, for OT-FKM type isoparametric quartics they obtained a definitive qualitative classification of
the \emph{sos}/\emph{non-sos} behavior of the plus form $G_F^{+}$ in terms of the multiplicity
pair $(m_+,m_-)$ and Clifford-algebraic invariants.  However, this qualitative dichotomy does
not address quantitative questions when $G_F$ \emph{is} \emph{sos}, such as the number of
quadratic summands or, more intrinsically, the dimension of the span of these summands.

Our first result gives an explicit SDP characterization for the \emph{sos} property of $G_F$. More precisely, it shows that deciding whether $G_F$ is \emph{sos} can be reduced to the feasibility of a concrete SDP in the matrix variable $B$, whose affine constraints are determined by the Clifford system defining the underlying OT-FKM type isoparametric polynomial.

\begin{thm}\label{thm:SDP-feasible-B}
	Let $G_F$ be the \emph{psd} form in \eqref{nonnegativepolyG} on $\mathbb{R}^n$
	associated with an OT-FKM type isoparametric polynomial $F$.
	Then $G_F$ is \emph{sos} if and only if the following SDP feasibility
	problem admits a solution in the matrix $B$:
	\[
	\begin{cases}
		B\succeq 0,\\[2pt]
		B_{ii}=I_l,\ \ B_{ik}=-B_{ik}^{T},\quad \forall\, 1\le i\neq k\le l,\\[2pt]
		R_iB_{ij}=R_j\quad \forall\, 1\le i,j\le l,
	\end{cases}
	\]
	where $B=(B_{ik})_{i,k=1}^{l}$ is viewed as an $l\times l$ block matrix with blocks
	$B_{ik}\in\mathbb{R}^{l\times l}$, and $R_i$ is defined in \eqref{Define R_q} from the Clifford system.
\end{thm}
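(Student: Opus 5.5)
The approach is to reduce $G_F$ to a biquadratic form in two groups of $l$ variables. Once that is done, Proposition~\ref{sostoSDP} becomes a Gram-matrix problem indexed by the monomials $u_iv_k$, and the displayed block conditions should be exactly the coefficient-matching equations.

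\emph{Step 1 (normal form).} Up to an orthogonal change of coordinates on $\mathbb{R}^{2l}$, the symmetric Clifford system can be taken as
\[
P_0=\begin{pmatrix} I_l&0\\0&-I_l\end{pmatrix},\quad
P_1=\begin{pmatrix} 0&I_l\\ I_l&0\end{pmatrix},\quad
P_{1+a}=\begin{pmatrix} 0&E_a\\ -E_a&0\end{pmatrix},
\]
where $E_1,\dots,E_{m-1}$ are skew-symmetric and anticommuting with $E_a^2=-I_l$. The sos property is invariant under such changes of coordinates. Writing $x=(u,v)$ with $u,v\in\mathbb{R}^l$, we have $F=|x|^4-2\sum_\alpha\langle P_\alpha x,x\rangle^2$. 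A direct expansion then gives
\[
G_F=|x|^4-\sum_{\alpha}\langle P_\alpha x,x\rangle^2=4\Bigl(|u|^2|v|^2-\langle u,v\rangle^2-\sum_{a}\langle E_au,v\rangle^2\Bigr).
\]
This is a form of bidegree $(2,2)$. I expect the matrices $R_i$ of \eqref{Define R_q} to be built from the rows of $I_l$ and the $E_a$, namely the data recording the coefficients of $u_i$ in the bilinear forms $\langle u,v\rangle$ and $\langle E_au,v\rangle$. I would first restate \eqref{Define R_q} in these coordinates.

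\emph{Step 2 (squares are bilinear).} Suppose $G_F=\sum_j q_j^2$ and write $q_j=u^TA_ju+u^TM_jv+v^TC_jv$. The pure $u^4$ part gives $\sum_j(u^TA_ju)^2=0$, so every $A_j=0$, and similarly every $C_j=0$. Hence $G_F$ is sos if and only if it is a sum of squares of bilinear forms $u^TM_jv$.

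\emph{Step 3 (Gram matrix).} Introduce $w=u\otimes v=(u_iv_k)\in\mathbb{R}^{l^2}$. By Step 2 and Proposition~\ref{sostoSDP}, $G_F$ (suitably normalized) is sos if and only if $G_F=w^TBw$ for some $B\succeq0$. We view $B$ as an $l\times l$ block matrix with blocks indexed by the $u$-indices $i,k$ and entries indexed by the $v$-indices.

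Next, match the coefficient of $u_iu_kv_sv_t$ on both sides. Diagonal blocks should give the normalization $B_{ii}=I_l$: the coefficient of $u_i^2v_s^2$ in $|u|^2|v|^2$ is $1$, and the remaining terms come from the Clifford part. For $i\ne k$, the symmetric part of $B_{ik}$ in $(s,t)$ is forced by the coefficient of $|u|^2|v|^2$ to vanish, which yields $B_{ik}=-B_{ik}^T$. The genuinely Clifford-dependent equations come from subtracting $\langle u,v\rangle^2+\sum_a\langle E_au,v\rangle^2$. I would show these equations are equivalent to $R_iB_{ij}=R_j$, possibly after the normalization built into \eqref{Define R_q}, which may also rescale or recombine the diagonal conditions.

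\emph{Step 4 (both directions).} Reading Step 3 forwards, an sos decomposition gives a feasible $B$. Conversely, a feasible $B$ factors as $B=\sum_j b_jb_j^T$, and this produces $G_F=\sum_j (b_j^Tw)^2$, a sum of squares of bilinear forms.

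The main obstacle is Step 3. The Gram matrix is not unique, because $w^TBw$ sees only the part of $B$ symmetric under the swap $(i,s)\leftrightarrow(k,t)$. Coefficient matching therefore yields linear conditions only on symmetrized combinations of entries. I must check that these are precisely $B_{ii}=I_l$, skew-symmetry of the off-diagonal blocks, and $R_iB_{ij}=R_j$, and that no constraint is lost or spuriously added. To do this I would first write $G_F=w^TB_0w$ for one explicit particular solution $B_0$, using Lagrange's identity in the form $\sum_{i<k}(u_iv_k-u_kv_i)^2$. I would then describe the kernel of the map $B\mapsto w^TBw$ explicitly and verify that the affine space in the theorem equals $B_0$ plus that kernel, using the relations $E_a^T=-E_a$ and $E_aE_b+E_bE_a=-2\delta_{ab}I_l$ to handle the cross terms.
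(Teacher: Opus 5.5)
Your Steps 1 and 2 are correct. Step 2 is in fact a much quicker way than the paper's minor-chasing in Lemma~\ref{AP-psd} to see that only the monomials $u_iv_k$ matter.

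The gap is in Step 3, where you identify the matrix $B$ of the theorem with a Gram matrix of $G_F$ with respect to $w=u\otimes v$. That identification is false for every normalization. The monomial $u_i^2v_s^2$ arises only from $w_{(i,s)}^2$, so any Gram matrix $G$ of $G_F/4$ has $G_{(i,s),(i,s)}=1-\sum_{\alpha=1}^{m}\bigl((E_{\alpha-1})_{is}\bigr)^2$. This equals $0$ for $s=i$, so $G_{ii}\neq I_l$.

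Moreover, $R_iB_{ij}=R_j$ is not a coefficient identity at all. It is an extra cut on top of the first two lines; for $m=1$ it prescribes the $i$-th row of $B_{ij}$ to be $v_j$, which skew-symmetry does not force. So the affine set in the theorem is not ``$B_0$ plus the kernel of $B\mapsto w^TBw$'' for any $B_0$, and no coefficient-matching argument can produce it.

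Step 4 breaks for the same reason. A feasible $B=\sum_j b_jb_j^T$ gives $\sum_j(b_j^Tw)^2=w^TBw=|u|^2|v|^2$, not $G_F/4$.

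The missing idea is a shift by $R^TR$, followed by a Schur-complement argument. The rows of $R$ are the coefficient vectors of the bilinear forms $u^TE_{\alpha-1}v$, so $G_F/4=|u|^2|v|^2-\|Rw\|^2$. The conditions ``$B$ symmetric, $B_{ii}=I_l$, $B_{ik}^T=-B_{ik}$'' are exactly the coefficient equations for $w^TBw=|u|^2|v|^2$. Hence the Gram matrices of $G_F/4$ are precisely the matrices $B-R^TR$ with $B$ satisfying the first two lines of the SDP. By your Step 2, $G_F$ is sos if and only if some such $B$ has $B-R^TR\succeq0$.

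To convert this into $B\succeq0$ plus the linear equations $R_iB_{ij}=R_j$, proceed as follows:
\begin{enumerate}
\item Note that $B-R^TR\succeq0$ if and only if $\begin{pmatrix} I_m & R\\ R^T & B\end{pmatrix}\succeq0$.
\item Use $R_iR_i^T=I_m$, which holds because the $E_\alpha$ are orthogonal and $E_\alpha E_\beta^T$ is skew for $\alpha\neq\beta$, together with $B_{ii}=I_l$.
\item Subtract $R_i$ times the $i$-th block row from the first block row, and do the transposed column operation. This clears the $I_m$ corner to $0$.
\item What remains off the corner is the block $(R_j-R_iB_{ij})_j$, which must vanish for positive semidefiniteness.
\end{enumerate}
This is the paper's Lemma~\ref{BRthm}, and it is where $R_iB_{ij}=R_j$ genuinely comes from. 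With these two ingredients your route would actually be shorter than the paper's. It bypasses Lemma~\ref{AP-psd} and the final verification of \eqref{b_iikk} and \eqref{b_ji}.
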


The matrix $B$ in Theorem~\ref{thm:SDP-feasible-B} is not merely an auxiliary variable in the SDP characterization. In fact, once a feasible $B$ is obtained, the corresponding \emph{sos} representation of $G_F$ can be written explicitly. More precisely, by Proposition~\ref{R(Q)=R(B-RTR)},
\[
G_F(x)=4\,\widetilde{X}^{T}\bigl(B-R^{T}R\bigr)\widetilde{X},
\]
where $\widetilde{X}$ and $R$ are defined in \eqref{widetilde-X} and \eqref{Define R}, respectively. Therefore any feasible solution $B$ immediately yields an explicit \emph{sos} representation of $G_F$. As a first application of this SDP characterization, we obtain an alternative proof of the complete \emph{sos} classification for $G_F$ associated with OT-FKM type isoparametric polynomials.

\begin{thm}\label{sos thm}
	For all \emph{psd} polynomials $G_F$ in \eqref{nonnegativepolyG} associated with OT-FKM type isoparametric polynomials, the form $G_F$ is \emph{sos} if and only if the multiplicity pair $(m_+, m_-)=(1, k)$, $(2, 2k - 1)$, $(3, 4)$, $(4, 3)^I$ (of indefinite class), $(5, 2)$ or $(6, 1)$ for any $k \in \mathbb{N}^+$.
\end{thm}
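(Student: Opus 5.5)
We will derive Theorem~\ref{sos thm} directly from Theorem~\ref{thm:SDP-feasible-B}, so the plan splits into a \emph{construction} direction (exhibit a feasible $B$ for each listed pair) and an \emph{obstruction} direction (show the SDP is infeasible otherwise). Throughout we use the standard structure of a symmetric Clifford system $P_0,\dots,P_m$ on $\mathbb{R}^{2l}$: writing $P_0=\mathrm{diag}(I_l,-I_l)$, $P_1=\begin{pmatrix}0&I_l\\ I_l&0\end{pmatrix}$ and $P_{1+\alpha}=\begin{pmatrix}0&E_\alpha\\ -E_\alpha&0\end{pmatrix}$, the skew matrices $E_1,\dots,E_{m-1}$ satisfy $E_\alpha E_\beta+E_\beta E_\alpha=-2\delta_{\alpha\beta}I_l$. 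They thus give a representation of the Clifford algebra $C_{m-1}$ on $\mathbb{R}^l$, and we assume the matrices $R_i$ in \eqref{Define R_q} are built from the $i$-th columns of the $E_\alpha$. The multiplicity pair is $(m,l-m-1)$, with $l=k\delta(m)$, where $\delta(m)$ is the dimension of an irreducible $C_{m-1}$-module.

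For the construction direction, first try the natural ansatz $B_{ij}=\sum_{\alpha}c_{ij}^{\alpha}E_\alpha+ \delta_{ij}I_l$, i.e.\ blocks lying in the span of $I_l$ and the Clifford generators. With this ansatz, skew-symmetry of the off-diagonal blocks and $B_{ii}=I_l$ are automatic, and the constraint $R_iB_{ij}=R_j$ becomes a set of bilinear identities among the $E_\alpha$. For $m=1,2$ (all $k$) we expect $B$ to be essentially a Gram matrix of the vectors $\{E_\alpha e_j\}$, which gives $B\succeq0$ for free. For the exceptional cases $(3,4)$, $(4,3)^I$, $(5,2)$, $(6,1)$ we have $l=8$, and the $E_\alpha$ come from octonion (or quaternion) multiplication. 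Here we would write $B=\widetilde{R}^T\widetilde{R}$ for a suitable enlargement $\widetilde{R}$ of $R$, using the fact that $\mathbb{R}^8$ with the $E_\alpha$ is "large enough" that the column vectors $E_\alpha e_j$ span a space on which the required orthogonality relations can be realized. Positivity then follows from the Gram form. The definite case $(4,3)^D$ needs to be excluded later, and the distinction between the two classes will appear through the value of $E_1E_2E_3$ (central with $\pm$ sign versus not).

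For the obstruction direction, we take a feasible $B$ and test it against well-chosen vectors. Since $B\succeq0$ and $B_{ii}=I_l$, every off-diagonal block has operator norm at most $1$, and for any unit vectors $u,v$ the $2\times2$ principal minors give $|u^TB_{ij}v|\le1$. The constraint $R_iB_{ij}=R_j$ forces $B_{ij}$ to map certain unit vectors (the rows of $R_j$, which are orthonormal up to scaling) to specified ones. Applying this to triples $i,j,k$ and invoking the Cauchy--Schwarz equality case, we get $B_{ij}B_{jk}=B_{ik}$ on the relevant subspaces. This cocycle relation should force a consistency condition on the products $E_\alpha E_\beta$. The resulting condition can hold only when $m\le2$, or when the $C_{m-1}$-module is irreducible of dimension $8$ (so $l=8$ and $m\le6$), with an extra sign condition excluding $(4,3)^D$. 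We also expect it to exclude $(m,l-m-1)$ with $m\ge3$ and $l>8$, and in particular $(3,4k)$ and $(7,8k)$ etc.

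The main obstacle is this obstruction step: turning the block constraints into a clean algebraic contradiction for large $l$ or $m\ge7$. Our first attempt will be to restrict $B$ to the subspace spanned by $\{E_\alpha e_i\}$ and compare its dimension, at most $l$, with the rank that the constraints force, at least $(m-1)+\dots$. A rank or dimension count of this kind should fail exactly outside the listed pairs, reproducing the Ge--Tang classification \cite{GT23}.
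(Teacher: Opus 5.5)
Your overall frame (deduce the classification from the SDP characterization, split into a construction and an obstruction) is the paper's. However, neither direction is carried out, and the steps you do specify fail.

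\textbf{Construction.} Under your ansatz an off-diagonal block is $B_{ij}=\sum_{\alpha=1}^{m-1}c_{ij}^{\alpha}E_\alpha$, since skew-symmetry kills the $I_l$ part. Because $E_0=I_l$, the first row of $R_iB_{ij}=R_j$ reads $v_iB_{ij}=v_j$. So all $l-1$ independent vectors $v_j$, $j\neq i$, would have to lie in $\mathrm{span}\{v_iE_1,\dots,v_iE_{m-1}\}$, which has dimension at most $m-1$. But $l-1=m+m_-\ge m+1$, so the ansatz is infeasible for every admissible pair; for $m=1$ it even gives $B_{ij}=0$. This agrees with Lemma~\ref{Btauthm}: forced blocks are sign-twisted matrices $\mp\tau_i(E_{\alpha-1})$, which do not lie in the span of $I_l$ and the $E_\beta$.

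Your plan for $m=1,2$, a Gram matrix of the vectors $E_\alpha e_j\in\mathbb{R}^l$, also fails. Such a matrix has rank at most $l$, and $B_{ii}=I_l$ forces rank at least $l$, so the rank is exactly $l$. By Lemma~\ref{rank(B)=l gives Cl algebra}, the blocks $B_{12},\dots,B_{1l}$ would then generate $C_{l-1}$ on $\mathbb{R}^l$, which is impossible unless $l\in\{4,8\}$. For general $l$ you need feasible matrices of much larger rank that are not built from the $E_\alpha$. The paper uses $B_{ij}=E_{ij}-E_{ji}$ (matrix units) for $m=1$, of rank $l(l-1)/2+1$, and a $\otimes I_2$/$\otimes\iota(\oi)$ version of it for $m=2$.

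For $l=8$, your ``enlargement $\widetilde R$ of $R$'' has the right shape, but all the content lies in producing it. One needs an $8\times 64$ block row $V^{(6)}$ of orthogonal blocks with $R_1B_{1j}=R_j$ and with $B_{12},\dots,B_{18}$ generating $C_7$; then $B^{(6)}=(V^{(6)})^TV^{(6)}$ is feasible. The pair $(4,3)^I$ also needs its own argument. The paper truncates the $m=6$ system and observes that the truncation must be indefinite, because $(4,3)^D$ is non-sos.

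\textbf{Obstruction.} This is where the theorem really lives, and here you offer only expectations. The dimension count you propose cannot succeed. The pairs $(4,3)^D$ and $(4,3)^I$ share $(m,l)=(4,8)$ and all matrix sizes, yet have opposite answers, so any count depending only on $m$, $l$, $\delta(m)$ cannot tell them apart. Your predicted criterion (``$m\le2$, or the $C_{m-1}$-module is irreducible of dimension $8$'') is also wrong as written. For $(3,4)$ and $(4,3)^I$ the module on $\mathbb{R}^8$ is reducible ($l=2\delta(m)$), yet $G_F$ is sos.

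Two ingredients are missing.
\begin{enumerate}
\item Monotonicity in $m$ (Proposition~\ref{reduction principle}). The identity $G_F^{(m_1)}=G_F^{(m_0)}+\sum_{\alpha=m_1+1}^{m_0}\langle P_\alpha x,x\rangle^2$, together with uniqueness of the geometric class when $m\not\equiv0\pmod 4$, reduces every non-sos claim to $(4,3)^D$ and to $(m,l)=(3,4r)$ with $r\ge3$. So $m\ge7$ and general $l$ never have to be handled directly.
\item Explicit contradictions extracted from the forced blocks of Lemma~\ref{Btauthm} and your cocycle relation (Lemma~\ref{BBthm}). For $(4,3)^D$, the forced product $B_{25}=B_{21}B_{15}$ fails to be skew-symmetric. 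For $(3,4r)$, the $4\times4$ sub-blocks $B^{15}_{12}$, $B^{19}_{13}$, $B^{59}_{23}$ are all forced to equal $I_4^{(4)}$. Hence $B$ has the principal submatrix $\begin{pmatrix}1&-1&-1\\-1&1&-1\\-1&-1&1\end{pmatrix}$, which is indefinite.
\end{enumerate}
The second is a positivity obstruction (three unit vectors cannot be pairwise antipodal), not a rank obstruction. It needs three irreducible summands, which is exactly why $(3,8)$ is sos while $(3,12)$ is not.
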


More importantly, the SDP viewpoint also allows us to go beyond the mere existence of an \emph{sos} representation and study the possible ranks of such representations. This is essentially different from earlier approaches, which usually prove that $G_F$ is \emph{sos} by constructing one explicit representation (for example, via Lagrange's identity), but do not describe the full range of attainable \emph{sos} representation ranks. By relating \emph{sos} representation ranks to the ranks of feasible SDP matrices through the framework developed in Subsection~\ref{Ranks via SDP}, we obtain the following complete description.

\begin{thm}\label{rank thm}
	Let $G_F$ be the \emph{psd} polynomial of the form \eqref{nonnegativepolyG}
	associated with an OT-FKM type isoparametric polynomial, and assume that $G_F$ is \emph{sos}.
	For any \emph{sos} representation of $G_F$, let $r$ denote its rank
	(i.e., the dimension of the span of the quadratic summands).
	Write the multiplicity pair as $(m_+,m_-)=(m,l-m-1)$.
	\begin{enumerate}
		\item If $(m_+,m_-)=(1,k)$ with $k\in\mathbb{N}^+$, then $l=k+2\geq 3$ and
		\[
		l-1\le r\le \frac{l(l-1)}{2}.
		\]
		\item If $(m_+,m_-)=(2,2k-1)$ with $k\in\mathbb{N}^+$, then $l=2k+2\geq 4$ and
		\[
		l-2\le r\le \frac{l(l-2)}{4}.
		\]
		\item If $(m_+,m_-)=(3,4)$, $(4,3)^I$, $(5,2)$ or $(6,1)$, then the rank is unique and equals
		\[
		r=8-m.
		\]
	\end{enumerate}
	Moreover, in cases \textnormal{(1)} and \textnormal{(2)}, the upper bound can be attained,
	for instance by the explicit \emph{sos} representations obtained from Lagrange's identity, whereas the lower bound can be attained if and only if $l=4$ or $l=8$.
\end{thm}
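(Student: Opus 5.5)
By the correspondence of Subsection~\ref{Ranks via SDP} and Proposition~\ref{R(Q)=R(B-RTR)}, the ranks of \emph{sos} representations of $G_F$ are exactly the numbers $\operatorname{rank}(B-R^TR)$ as $B$ runs over the feasible matrices of Theorem~\ref{thm:SDP-feasible-B}. We use the identity $G_F=4\widetilde{X}^T(B-R^TR)\widetilde{X}$, and we also need that the quadratic monomials in $\widetilde{X}$ are linearly independent, so that the Gram matrix is $4(B-R^TR)$ and nothing is lost. The proof is therefore a matrix problem: determine the rank range of $B-R^TR$ over the feasible set. The feasible set is a spectrahedron cut out by $B\succeq0$, $B_{ii}=I_l$, skew off-diagonal blocks, and $R_iB_{ij}=R_j$.

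\textbf{Step 1: structure of $B$ and $B-R^TR$.} From $R_iB_{ij}=R_j$ with $B_{ii}=I$, the blocks $B_{ij}$ are prescribed on the row space of $R_i$, which comes from the Clifford system $P_0,\dots,P_m$. We decompose $\mathbb{R}^l$ accordingly and write $B=R^TR+C$, where $C\succeq0$ is the free part. We then show that $C$ is supported on the complement of $\operatorname{row}(R)$ and satisfies the induced constraints. The first of these is that $C$ has zero diagonal blocks after projection. The second is that $C$ has skew off-diagonal blocks modulo the $R$-part.

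\textbf{Step 2: upper bound.} Since $G_F$ is a quartic in $n=2l$ variables and the relevant quadratics lie in a restricted space, we bound $\operatorname{rank}C$ by the dimension of the space of admissible skew-type entries. For $m=1$ we expect this to be $\binom{l}{2}$, the dimension of the span of the Lagrange-type $2\times2$ minors. For $m=2$ the quaternionic structure should halve both indices, giving $\binom{l/2}{2}\cdot 2=l(l-2)/4$. Attainment comes from writing the Lagrange identity representation in the form $4\widetilde{X}^T(B-R^TR)\widetilde{X}$ and checking that its quadratics are independent.

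\textbf{Step 3: lower bound.} The sum of the diagonal blocks of $C$ is forced, via $B_{ii}=I_l$, to have trace at least a fixed amount; equivalently, $G_F$ restricted to suitable slices is a positive definite form in $l-m$ variables. This gives $\operatorname{rank}\ge l-m$, i.e.\ $l-1$ and $l-2$ in cases (1) and (2). Equality forces $C$ to be a rank-$(l-m)$ matrix with identity-like diagonal blocks and skew off-diagonal blocks. This produces $l-m$ vectors $v_1,\dots,v_{l-m}$ with $\langle v_i,v_j\rangle=\delta_{ij}$ and skew pairings, which amounts to an orthogonal multiplication $\mathbb{R}^{l}\times\mathbb{R}^{l-m}\to\mathbb{R}^{l-m}$ compatible with the Clifford structure. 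By Hurwitz--Radon this exists only when $l\in\{4,8\}$ (e.g.\ the octonions for $l=8$, $m=1$), and we construct it explicitly in those two cases.

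\textbf{Step 4: case (3).} When $m\ge3$ we have $l=8$ or $l=9$-type dimensions. Here the constraints $R_iB_{ij}=R_j$ together with the rigidity of Clifford systems with $m\ge3$ on $\mathbb{R}^{16}$ should pin down $C$ uniquely. The feasible set is then a single point, of rank $l-m-1+1=8-m$. We verify this by showing that the row spaces of the $R_i$ already span enough that the free part vanishes, except for a fixed block.

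The main obstacle is Step 3's \emph{only if}: converting minimal rank into a Hurwitz-type composition and excluding all $l\neq4,8$. I would first attempt a Radon--Hurwitz count via the skew matrices $C_{ij}$, which should form a Clifford family.
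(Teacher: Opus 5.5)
The framework in your proposal is sound and matches the paper in substance. This covers the reduction to $\operatorname{rank}(B-R^TR)$, the splitting $B=R^TR+C$ with $C$ living off the row space of $R$ (so $r=\operatorname{rank}B-m$), and the lower bound $r\ge l-m$ via a diagonal block $B_{ii}=I_l$ or the slice $u=e_1$. The genuine gap is case (3). There the whole content is that the feasible set is a single point, and you give no mechanism for it.

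\textbf{Case (3).} ``Rigidity of Clifford systems'' is the wrong reason. Uniqueness of the system up to equivalence says nothing about uniqueness of Gram matrices: for $m=1,2$ the systems are equally unique, yet the feasible sets are large. For $m=4$ there are two classes, so you would also have to explain why only the indefinite one matters. (Also, $l=8$ in all four cases.)

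Nor do the affine constraints alone pin $B$ down. For $m=3$, the relations $R_1B_{15}=R_5$ and $R_5B_{51}=R_1$, together with skew-symmetry, determine $B_{15}$ only up to one free entry $d$ at positions $(4,8),(8,4)$. The paper closes this as follows:
\begin{enumerate}
\item Reduce to $m=3$ via $\mathcal{B}(R^{(6)})\subseteq\mathcal{B}(R^{(5)})\subseteq\mathcal{B}(R^{(4)})\subseteq\mathcal{B}(R^{(3)})$, obtained by truncating rows of $R$ (Lemma~\ref{B(R)inB(R')}). The $m=4$ truncation of the $m=6$ system is sos, hence indefinite.
\item Use Lemma~\ref{Btauthm} to identify $B_{12},B_{17},B_{28},B_{43},B_{46},B_{56}$ as orthogonal matrices $\mp\tau_i(E_\alpha)$.
\item Use positive semidefiniteness through Lemma~\ref{BBthm}: an orthogonal off-diagonal block forces $B_{ik}=B_{ij}B_{jk}$. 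This gives $B_{15}=B_{56}B_{16}$, which forces $d=1$.
\item Then every $B_{1i}$ is orthogonal and $B=V^TV$ with $V$ the first block row, so $\operatorname{rank}B=8$ and $r=8-m$ by Lemma~\ref{rank(B)-m}.
\end{enumerate}
Positivity enters essentially in step 3, and that step is absent from your plan.

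\textbf{Upper bounds (Step 2).} These are only conjectured; you need an actual argument. One option is the paper's row count: all rows $\rho_{ii}$ of $B$ coincide and $\rho_{ji}=-\rho_{ij}$; for $m=2$, first eliminate the even block rows using the orthogonal blocks $B_{2s-1,2s}$. Another option: by the support of $Q$ in Lemma~\ref{AP-psd}, each summand is a bilinear form $u^TMv$ that vanishes whenever $v\in\operatorname{span}\{u,E_1u,\dots,E_{m-1}u\}$. This forces $M$ to be skew and to anticommute with $E_1,\dots,E_{m-1}$. For $m=1,2$ that space has dimension $\binom{l}{2}$, resp.\ $l(l-2)/4$. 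Note that for $m=2$ the structure is complex, not quaternionic.

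\textbf{Equality case (Step 3).} The composition $\mathbb{R}^l\times\mathbb{R}^{l-m}\to\mathbb{R}^{l-m}$ cannot exist: for a fixed second argument it would be a similarity $\mathbb{R}^l\to\mathbb{R}^{l-m}$. The correct object comes from adjoining the $m$ forms $\langle E_\alpha u,v\rangle$ to the $l-m$ summands $q_j/2$. This gives a normed bilinear map $\mathbb{R}^l\times\mathbb{R}^l\to\mathbb{R}^l$, and Hurwitz then forces $l\in\{1,2,4,8\}$. This is equivalent to Lemma~\ref{rank(B)=l gives Cl algebra} and is routine, not the main obstacle. For the converse at $(m,l)=(2,8)$ you also need compatibility with the given $E_1$. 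The paper obtains this from the $m=6$ case via Corollary~\ref{cor:r+m-m'} and geometric invariance.
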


In Theorem~\ref{rank thm}, the feasible matrices $B$ corresponding to the extremal cases
can be written explicitly once a representative Clifford system is fixed.
For cases \textnormal{(1)} and \textnormal{(2)}, the upper bounds are attained,
for instance, by the matrices $B(1,l)$ and $B(2,l)$ defined in
\eqref{B(1,l)} and \eqref{B(2,l)}, respectively, which arise as feasible
solutions of the associated SDP for the chosen Clifford system.
The lower bounds in these two cases occur when $l=4$ and $l=8$, corresponding
to the matrices $B(2,4)$ and $B^{(6)}$ (defined in \eqref{Define B^6}),
respectively.
In case \textnormal{(3)}, the feasible matrix is always $B^{(6)}$; in fact,
for each of the four multiplicity pairs, it is the unique feasible solution
of the SDP.

An \emph{sos} representation $G_F=\sum_{j=1}^r q_j^2$ produces $r$ linearly independent quadratic
forms vanishing on $M_-$.  By Solomon's result \cite{Solomon92}, such quadratic forms give rise
to Laplace eigenfunctions with eigenvalue $2n$ on the minimal isoparametric hypersurfaces, and hence
Theorem~\ref{rank thm} provides explicit lower bounds for the dimension of the corresponding
eigenspace.  

On the other hand, there is a close connection between \emph{sos} representations of $G_F$
and orthogonal multiplications.  In the OT-FKM type case with $g=4$, the existence of an \emph{sos}
representation of $G_F$ implies the existence of an orthogonal multiplication
\[
T:\mathbb{R}^l\times\mathbb{R}^l\longrightarrow \mathbb{R}^{m+r}
\quad\text{of type }[l,l,m+r],
\]
naturally associated with the underlying Clifford system (see \cite{GT23}).
The existing results provide such a multiplication for some $r$ (equivalently, for some target
dimension $m+r$), but do not determine the possible values of $r$.
Our rank theorem fills this gap: it determines the admissible ranks $r$ of
\emph{sos} representations of $G_F$, and therefore yields corresponding quantitative constraints on the
target dimension $m+r$ of the associated orthogonal multiplications.  In particular, for $m\ge 3$
the rank is uniquely determined and satisfies $m+r=8$, which pins down the target dimension.

\section{Preliminaries}\label{secpre}
All discussions on the OT-FKM type isoparametric polynomial in this paper are based on the following proposition, which transforms the \emph{sos} problem into the feasibility of an SDP problem.
\begin{prop}\label{sostoSDP}
	Let $p(x)$ be a nonnegative polynomial of degree $2d$ in $n$ variables. Then $p(x)$ is \emph{sos} if and only if the following SDP is feasible, i.e., there exists a positive semidefinite matrix $S$ satisfying
	\begin{equation*}
		\begin{cases}
			&S\succeq 0, \\
			&p(x) = z(x)^T S z(x), 
		\end{cases}
	\end{equation*}
	where $z(x)=\bigl(x^\alpha\bigr)_{|\alpha|\le d}$ is the vector of all monomials in $x_1, \ldots, x_n$ of degree at most $d$.
	
\end{prop}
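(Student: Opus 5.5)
The plan is to prove both implications directly, using the Gram-matrix correspondence between quadratic forms in the monomial vector and symmetric matrices. Nonnegativity of $p$ plays no role in the equivalence; it only makes the question meaningful.

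For the direction ``SDP feasible $\Rightarrow$ sos'', suppose $S\succeq 0$ and $p(x)=z(x)^TSz(x)$. Since $S$ is real symmetric positive semidefinite, the spectral theorem gives
\[
S=\sum_{k=1}^{N}\lambda_k v_kv_k^T
\]
with $\lambda_k\ge 0$, $N=\operatorname{rank}S$, and orthonormal vectors $v_k$. Substituting,
\[
p(x)=\sum_k \lambda_k\bigl(v_k^Tz(x)\bigr)^2=\sum_k p_k(x)^2,\qquad p_k:=\sqrt{\lambda_k}\,v_k^Tz(x).
\]
Each $p_k$ is a real polynomial of degree at most $d$, so $p$ is sos. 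Equivalently, one can use a factorization $S=L^TL$ and take $p_k$ to be the entries of $Lz(x)$.

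For the converse, suppose $p=\sum_{k=1}^N p_k^2$ with real polynomials $p_k$. The main step, and the only real obstacle, is a degree bound: each $p_k$ must have degree at most $d$. Let $e=\max_k\deg p_k$ and suppose $e>d$. Let $h_k$ be the homogeneous component of $p_k$ of degree $e$. The degree-$2e$ component of $\sum p_k^2$ is $\sum_k h_k^2$. This is a nonzero polynomial, because some $h_k\ne 0$, so $h_k(x_0)\neq 0$ for some $x_0\in\mathbb{R}^n$, and then $\sum_k h_k(x_0)^2>0$. Hence $\deg p=2e>2d$, a contradiction. The point is that top-degree terms cannot cancel in a sum of squares over $\mathbb{R}$.

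Once the bound $\deg p_k\le d$ is known, each $p_k$ is a linear combination of the monomials in $z(x)$, so $p_k=c_k^Tz(x)$ for some coefficient vector $c_k$. Setting
\[
S:=\sum_k c_kc_k^T\succeq 0
\]
gives $z(x)^TSz(x)=\sum_k (c_k^Tz)^2=p(x)$, so the SDP is feasible.

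The same construction gives $\operatorname{rank}S=\dim\operatorname{span}\{c_k\}=\dim\operatorname{span}\{p_k\}$, because the monomials are linearly independent. This is the rank correspondence used later in the paper.

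In the homogeneous case (a form of degree $2d$), the same top-degree argument applied to the lowest-degree components shows that each $p_k$ is a form of degree exactly $d$. One may therefore restrict $z(x)$ to monomials of degree exactly $d$. This is the version used for $G_F$ with quadratic summands.
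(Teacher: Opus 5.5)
Your proof is correct and follows the same Gram-matrix argument as the paper: factor $S\succeq 0$ to produce the squares, and conversely write each $p_k=c_k^Tz(x)$ to build $S=\sum_k c_kc_k^T$. What you add is a justification of the degree bound $\deg p_k\le d$, via the non-vanishing of the top-degree part $\sum_k h_k^2$, which the paper only asserts; you also check the rank correspondence and the homogeneous reduction, which the paper states separately in \eqref{r=rank(S)} and Remark~\ref{rem of sostoSDP}.
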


\begin{proof}
	(Necessity): Assume that $p(x)=\sum_{k=1}^{N}p_k(x)^2$ is \emph{sos}.
	Since $\deg p=2d$, each $p_k$ has degree at most $d$.
	Thus, we can let $V_k$ be the vector such that $V_k^T z(x) = p_k(x)$ for $1 \leq k \leq N$, and define $V := (V_1, \cdots, V_N)^T$. It is obvious that the matrix $V^T V$ is positive semidefinite and satisfies $p(x) = z(x)^T (V^T V) z(x)$. Thus, we can take $S = V^T V$.
	
	(Sufficiency): Assume that $p(x) = z(x)^T S z(x)$ with $S \succeq 0$. Then there exists a matrix $V$ such that $S = V^T V$. Hence $p(x) = z(x)^T V^T V z(x) = \| V z(x) \|^2$ is \emph{sos}, where $\|\cdot\|$ denotes the Euclidean norm.
\end{proof}

\begin{rem}\label{rem of sostoSDP}
	For certain special polynomials $ p(x) $, the number of monomials in $ z(x) $ can be reduced to simplify the problem. For instance, if $ p(x) $ is a homogeneous polynomial, taking $ z(x) $ to be all monomials of degree exactly $ d $ is sufficient to obtain the conclusion of Proposition \ref{sostoSDP}.
\end{rem}

Recall that an OT-FKM type isoparametric polynomial is defined as (\textit{cf.} \cite{OT75, FKM81})
\begin{equation}\label{FKM isop. poly.}
	F(x) = |x|^4 - 2\displaystyle\sum_{\alpha = 0}^{m}{\langle
	P_{\alpha}x,x\rangle^2},  \quad x\in\mathbb{R}^{2l},
\end{equation}
where $\{P_0,\cdots,P_m\}$ is  a symmetric Clifford system on $\mathbb{R}^{2l}$, i.e., $P_{\alpha}$'s are symmetric matrices satisfying $P_{\alpha}P_{\beta}+P_{\beta}P_{\alpha}=2\delta_{\alpha\beta}I_{2l}$.
Then the multiplicity pair is $(m_+, m_-)=(m, l-m-1)$. Two Clifford systems $\{P_0,\cdots,P_m\}$ and $\{Q_0,\cdots,Q_m\}$ on $\mathbb{R}^{2l}$ are called \emph{algebraically equivalent} if there exists $A\in O(\mathbb{R}^{2l})$ such that $Q_\alpha = AP_\alpha A^{T}$ for all $\alpha \in\{0, \cdots, m\}$. They are called \emph{geometrically equivalent} when there exists $B\in O(\mathrm{Span}\{P_0, \cdots,P_m\})$ such that $\{Q_0,\cdots,Q_m\}$ and $\{B(P_0),\cdots,B(P_m)\}$ are algebraically equivalent, which give two isoparametric polynomials that are congruent under an orthogonal transformation of $\mathbb{R}^{2l}$.
	
From now on, we write $G_F=G_F^+/2$ for simplicity. Then
\begin{equation}\label{nonnegativepolyG}
	G_F(x)=(F(x)+|x|^4)/2=|x|^4-\sum_{\alpha=0}^m\langle P_\alpha x,x\rangle^2.
\end{equation}

Let $n:=2l$. In order to transcribe $n$-variable  \emph{psd} polynomial $G_F$ into quadratic forms, we define $X$ and $\widetilde{P}_{\alpha}$ as $\bar{n}:=\frac{n(n+1)}{2}$ dimensional column vectors satisfying
\begin{align*}
	&X :=(x_1^2, x_2^2, \cdots, x_n^2, x_1x_2, \cdots, x_ix_j, \cdots, x_{n-1}x_n)^T, & \quad 1 \leq i<j \leq n, \\
	&\widetilde{P}_{\alpha}^{T}X  :=\langle P_\alpha x,x\rangle= \sum_{i=1}^n P^{\alpha}_{ii}x_i^2+2\sum_{1\leq i<j \leq n} P^{\alpha}_{ij}x_ix_j, & \quad 0 \leq \alpha \leq m,
\end{align*}
where $P^{\alpha}_{ij}$ is the $(i,j)$-entry of $P_{\alpha}$.

Let $D$ be a $\bar{n} \times \bar{n}$ matrix that has the $n \times n$ all-ones matrix in its upper-left block and zeros everywhere else, and $\widetilde{P}:=\sum_{\alpha=0}^m \widetilde{P}_{\alpha}\widetilde{P}_{\alpha}^{T}=(\widetilde{P}_{ij,kh})_{\bar{n} \times \bar{n}}$ $(1 \leq i\leq j \leq n, 1 \leq k\leq h \leq n)$ that is a symmetric matrix with 
\begin{equation}\label{barP}
	\widetilde{P}_{ii,kk}=\sum_{\alpha=0}^m P^{\alpha}_{ii}P^{\alpha}_{kk}, \quad \widetilde{P}_{ii,kh}=2\sum_{\alpha=0}^m P^{\alpha}_{ii}P^{\alpha}_{kh}, \quad \widetilde{P}_{ij,kh}=4\sum_{\alpha=0}^m P^{\alpha}_{ij}P^{\alpha}_{kh},
\end{equation}
for $i\neq j, k\neq h$.
Note that the indices $ij$ and $kh$ are ordered as follows: first $\{ii\}_{i=1}^n$, then $\{ij\}_{1 \leq i < j \leq n}$ in lexicographic order. This order, which matches the sequence of $X$, is used for the rows and columns of all $\bar{n} \times \bar{n}$ matrices herein.
Then
\begin{equation*}
	|x|^4  =|x|^2 \cdot |x|^2=X^{T}DX, 
\end{equation*}
\begin{equation*}
	\sum_{\alpha=0}^m\langle P_\alpha x,x\rangle^2  =\sum_{\alpha=0}^m X^{T}\widetilde{P}_{\alpha}\widetilde{P}_{\alpha}^{T}X = X^{T}\widetilde{P}X,
\end{equation*}
\begin{equation}\label{D-P}
	G_F(x) = X^{T}(D - \widetilde{P})X.
\end{equation}

Without loss of generality, we can write the Clifford system $\{P_0,\cdots,P_m\}$ in matrix form under the decomposition $\mathbb{R}^{2l}=E_+(P_0)\oplus E_-(P_0)\cong\mathbb{R}^l\oplus \mathbb{R}^l$, where $E_{\pm}(P_0)$ are the eigenspaces of the eigenvalues $\pm1$ of $P_0$, by
	\begin{equation}\label{Cliffordsys-alg}
		P_0=\begin{pmatrix}
			I_l &  0 \\
			0 & -I_l
		\end{pmatrix}, \quad P_1=\begin{pmatrix}
			0 & I_l    \\
			I_l & 0
		\end{pmatrix}, \quad P_{\alpha+1}=\begin{pmatrix}
			0 & E_\alpha    \\
			-E_\alpha & 0
		\end{pmatrix}, \quad 1\leq\alpha\leq m-1,
	\end{equation}
	where $\{E_1,\cdots,E_{m-1}\}$ generates a Clifford algebra $C_{m-1}$ on $\mathbb{R}^l$, i.e., $E_\alpha$'s are skew-symmetric matrices satisfying $E_\alpha E_\beta+E_\beta E_\alpha=-2\delta_{\alpha\beta}I_l$.
	
	Thus, the entries of matrix $\widetilde{P}=(\widetilde{P}_{ij,kh})_{\bar{n} \times \bar{n}}$ in \eqref{barP} are given by
	\begin{align}
		\widetilde{P}_{ii,kk}&= P^{0}_{ii}P^{0}_{kk}= \begin{cases}
		1, &i,k \leq l ~or ~i,k > l, \\
		-1, &otherwise,
        \end{cases} \label{barP1}\\
		 \widetilde{P}_{ii,kh}&=2P^{0}_{ii}P^{0}_{kh}=0, \label{barP2}\\ \widetilde{P}_{ij,kh}&=4\sum_{\alpha=1}^m P^{\alpha}_{ij}P^{\alpha}_{kh}= \begin{cases}
		 4\sum_{\alpha=1}^m P^{\alpha}_{ij}P^{\alpha}_{kh}, &i,k \leq l ~and ~j,h > l, \\
		 0, &otherwise,\label{barP3}
		\end{cases}
	\end{align}
for $i\neq j, k\neq h$.

Since $G_F$ is a quartic homogeneous form and $X$ consists of all quadratic monomials, the following lemma follows immediately from Proposition \ref{sostoSDP} and Remark \ref{rem of sostoSDP}.

\begin{lem}\label{SDP}
	The \emph{psd} form $G_F$ in \eqref{nonnegativepolyG} on $\mathbb{R}^{n}$ is \emph{sos} if and only if the following SDP is feasible, i.e., there exists a positive semidefinite matrix $Q$ satisfying
	\begin{equation*}
		\begin{cases}
			&Q\succeq 0, \\
			&G_F(x)=X^{T}QX. 
		\end{cases}
	\end{equation*}
\end{lem}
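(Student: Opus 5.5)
This is a direct specialization of Proposition~\ref{sostoSDP} as refined by Remark~\ref{rem of sostoSDP}. The plan is to verify that the monomial vector there can be taken to be exactly $X$.

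\emph{Necessity.} Suppose $G_F$ is \emph{sos}, say $G_F=\sum_{k=1}^N p_k^2$.
\begin{enumerate}
\item First show each $p_k$ is a quadratic form. Write $p_k=\sum_{j} p_{k,j}$ with $p_{k,j}$ homogeneous of degree $j$. Let $j_{\max}$ be the largest index with some $p_{k,j_{\max}}\neq 0$.
\item If $j_{\max}>2$, the degree-$2j_{\max}$ part of $\sum_k p_k^2$ is $\sum_k p_{k,j_{\max}}^2$. This is a nonzero \emph{sos} form, so it cannot vanish, contradicting $\deg G_F=4$.
\item Symmetrically, let $j_{\min}$ be the smallest index with some $p_{k,j_{\min}}\neq0$. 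If $j_{\min}<2$, the degree-$2j_{\min}$ part is $\sum_k p_{k,j_{\min}}^2\neq 0$, contradicting homogeneity of $G_F$. Hence every $p_k$ is homogeneous of degree $2$.
\item Every quadratic form is a linear combination of the entries of $X$, namely $x_i^2$ and $x_ix_j$ for $i<j$. So write $p_k=V_k^TX$ with $V_k\in\mathbb{R}^{\bar n}$.
\item Set $V:=(V_1,\dots,V_N)^T$ and $Q:=V^TV\succeq0$. Then $X^TQX=\sum_k (V_k^TX)^2=G_F(x)$.
\end{enumerate}

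\emph{Sufficiency.} Suppose $Q\succeq0$ and $G_F(x)=X^TQX$. Factor $Q=V^TV$, for instance by a spectral decomposition. Then $G_F(x)=\|VX\|^2$, and each entry of $VX$ is a quadratic form, so $G_F$ is a sum of squares.

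The only step needing care is the homogeneity argument in steps 2 and 3, which justifies dropping the lower- and higher-degree monomials from $z(x)$. It rests on the observation that the extreme-degree component of a sum of squares is itself a nonzero sum of squares, so it cannot cancel. Everything else follows exactly as in the proof of Proposition~\ref{sostoSDP}.
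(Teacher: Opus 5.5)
Your proof is correct and follows the paper's route. The paper obtains the lemma directly from Proposition~\ref{sostoSDP}, using Remark~\ref{rem of sostoSDP} to shrink the monomial vector to the degree-$2$ monomials $X$; your extreme-degree non-cancellation argument in steps 2--3 supplies the justification for that remark, which the paper states without proof.
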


 Let $\mathscr{A}:=\{A \in \mathbb{R}^{\bar{n}\times \bar{n}}: A^{T}=A,\ X^{T}AX=0\}$. Since $G_F(x) = X^{T}(D - \widetilde{P})X$ (see \eqref{D-P}), the lemma states that 
 \[
 G_F\text{ is \emph{sos}} \iff \exists A\in \mathscr{A} \text{ such that } Q = A + D - \widetilde{P} \succeq 0.
 \]

\section{SDP Characterization for the \emph{sos} Property of $G_F$}\label{secSDP}

In this section, we establish the SDP characterization for the \emph{sos} property of $G_F$, and in particular prove Theorem~\ref{thm:SDP-feasible-B}. 
Our main goal is to show that the question whether $G_F$ is \emph{sos} is equivalent to the feasibility problem of an explicit semidefinite program in the matrix variable $B$. 

To achieve this, we introduce several auxiliary matrices and derive a number of structural identities and lemmas. 
Although these preliminary results are obtained here in the course of proving Theorem~\ref{thm:SDP-feasible-B}, they will also play an essential role in the later sections, both in the \emph{sos} classification and in the study of the possible ranks of \emph{sos} representations.

For the remainder of this paper, assume 
\begin{equation}\label{Q=A+D-P}
	Q:=A +D - \widetilde{P}.
\end{equation} 
We establish some relations between the matrices $A=(A_{ij,kh})_{\bar{n} \times \bar{n}}$ and $Q=(Q_{ij,kh})_{\bar{n} \times \bar{n}}$ in Lemma \ref{AP-psd}.
\begin{lem}\label{AP-psd}
$A\in \mathscr{A}$ and $Q$ is positive semidefinite if and only if the following conditions hold:
\begin{enumerate}
    \item \label{condition of A} for indices satisfying $1\leq i,k \leq l$ and $l<j,h \leq n\ (=2l)$, 
    \begin{equation}\label{l2-dimA}
		\left\{\begin{array}{lll}
			A_{ij,ij}=4, \quad A_{ij,ih}=0 ~(h \neq j),\\
			A_{i(i+l),k(k+l)}=4, \quad  A_{i(i+l),kh}=0 ~(h \neq k+l),\\
			A_{ij,kh}=A_{kh,ij}, \\
			A_{ij,kh}=-A_{ih,kj} ~(k \neq i), \\
			A_{ij,kh}=-A_{(j-l)(i+l),kh} ~(j \neq i+l),
		\end{array}
		\right.
	\end{equation}
    \begin{equation}\label{A-P}
    Q_c:=\(Q_{ij,kh}\)_{l^2 \times l^2} =\(A_{ij,kh}-4 \sum _{\alpha=1}^{m} P^{\alpha}_{ij}P^{\alpha}_{kh}\)_{l^2 \times l^2}\succeq 0; 
\end{equation}
    \item for indices satisfying $1\leq i\leq j\leq n$ and $1\leq k\leq h\leq n$ but not satisfying the cases of \emph{(1)},
    \[
    Q_{ij,kh}=0.
    \]
    \label{CONSTofQ}
\end{enumerate}

\end{lem}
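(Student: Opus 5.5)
We combine two ingredients: the linear description of $\mathscr{A}$, and the fact that a positive semidefinite matrix with a zero diagonal entry has the whole corresponding row and column equal to zero. The first step is to describe $\mathscr{A}$ explicitly. Expanding $X^TAX$ as a quartic form and collecting coefficients, $A\in\mathscr{A}$ holds if and only if, for every monomial $x_ax_bx_cx_d$, the sum of the entries $A_{ij,kh}$ over all index pairs $(ij,kh)$ with $\{i,j\}\cup\{k,h\}=\{a,b,c,d\}$ (as multisets) vanishes. In particular, the coefficient of $x_i^4$ is $A_{ii,ii}$, and the coefficient of $x_i^2x_j^2$ is $2A_{ii,jj}+A_{ij,ij}$. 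This produces a list of linear relations: diagonal entries of the type $A_{ii,ii}$ must vanish, cross relations link entries of the type $A_{ii,jj}$ with $A_{ij,ij}$, and so on.

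Next we compute the diagonal of $Q=A+D-\widetilde P$ using \eqref{barP1}--\eqref{barP3}. Since $D_{ii,ii}=1=\widetilde P_{ii,ii}$, we get $Q_{ii,ii}=A_{ii,ii}=0$. Positive semidefiniteness then forces the entire row and column indexed by $ii$ of $Q$ to vanish. This pins down $A_{ii,kk}=\widetilde P_{ii,kk}-1$ (which is $0$ or $-2$) and $A_{ii,kh}=0$. Feeding these values back into the monomial relations $2A_{ii,jj}+A_{ij,ij}=0$ determines $A_{ij,ij}$: it equals $0$ when $i,j$ lie on the same side of the splitting $\mathbb{R}^l\oplus\mathbb{R}^l$, and $4$ when $i\le l<j$. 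For pairs $ij$ with both indices on the same side, the relation gives $Q_{ij,ij}=A_{ij,ij}-\widetilde P_{ij,ij}=0$ by \eqref{barP3}, so those rows and columns vanish as well. What remains is exactly the $l^2\times l^2$ block indexed by mixed pairs $ij$ with $i\le l<j$, together with the vanishing statement (2).

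On this mixed block, the remaining monomial relations read as follows: for $x_i^2x_jx_h$ (with $i\le l$ and $j\ne h>l$) we get $A_{ij,ih}+A_{ii,jh}$-type sums, which give $A_{ij,ih}=0$; symmetrically for $x_ix_kx_j^2$; and for distinct $i,k,j,h$ the relation $A_{ij,kh}+A_{ih,kj}+A_{ik,jh}=0$, where $A_{ik,jh}=0$ because $ik$ is a same-side pair. This gives the antisymmetry $A_{ij,kh}=-A_{ih,kj}$. We then need to translate these relations into the stated form \eqref{l2-dimA}. The entries $A_{i(i+l),k(k+l)}=4$ and the relation involving $(j-l)(i+l)$ appear because the paper indexes the mixed pairs via $\mathbb{R}^l\otimes\mathbb{R}^l$; the identification $x_j$, $j>l$, with the index $j-l$ lets us swap roles, and the coefficient of $x_i^2x_{k+l}^2$-type terms combined with the zero rows already obtained yields the value $4$. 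Finally, $Q_c=(A_{ij,kh}-\widetilde P_{ij,kh})$ by \eqref{barP3} and $D=0$ off the $ii$ block, which is \eqref{A-P}.

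For the converse, given conditions (1) and (2), we define $A$ by filling in the forced values. Checking all monomial coefficients then gives $A\in\mathscr{A}$, and $Q$ is, up to zero rows and columns, equal to $Q_c\succeq0$. The main obstacle is bookkeeping: correctly matching each monomial class with the entries it involves, especially the factor-of-two conventions in $X$ versus $\widetilde P$ and the mixed relation $A_{ij,kh}=-A_{(j-l)(i+l),kh}$. I would handle this by organizing the monomials by how many of their indices exceed $l$.
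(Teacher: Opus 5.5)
Your treatment of the $ii$ rows, the same-side rows, the relations $A_{ij,ih}=0$ and $A_{ij,kh}=-A_{ih,kj}$, and the converse is fine and matches the paper. The gap is where you say that $A_{i(i+l),k(k+l)}=4$, $A_{i(i+l),kh}=0$ and $A_{ij,kh}=-A_{(j-l)(i+l),kh}$ are a matter of indexing. You claim they follow from the $x_i^2x_{k+l}^2$ coefficients and the zero rows already found. They are not consequences of $A\in\mathscr{A}$ at all.

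Every linear relation defining $\mathscr{A}$ couples only entries attached to the same monomial. But $A_{ij,kh}$ and $A_{(j-l)(i+l),kh}$ are attached to the distinct monomials $x_ix_jx_kx_h$ and $x_{j-l}x_{i+l}x_kx_h$ (since $j\neq i+l$), so no relabeling can link them. Similarly, for $i\neq k$ the entry $A_{i(i+l),k(k+l)}$ is off-diagonal. The $x_i^2x_{k+l}^2$ coefficient involves only $A_{i(k+l),i(k+l)}$ and $A_{ii,(k+l)(k+l)}$. The only relation containing $A_{i(i+l),k(k+l)}$ (from $x_ix_{i+l}x_kx_{k+l}$) gives $A_{i(i+l),k(k+l)}=-A_{i(k+l),k(i+l)}$, which leaves its value free. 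Both facts are forced by positive semidefiniteness together with the normal form \eqref{Cliffordsys-alg}, and your argument never uses the Clifford system.

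To close the gap, first apply your zero-diagonal principle once more inside the mixed block. Since $P^1_{i(i+l)}=1$ and $P^{\alpha+1}_{i(i+l)}=(E_\alpha)_{ii}=0$, you get $Q_{i(i+l),i(i+l)}=4-4\sum_{\alpha=1}^m(P^\alpha_{i(i+l)})^2=0$. So the entire row $i(i+l)$ of $Q$ vanishes, and $A_{i(i+l),kh}=4\sum_{\alpha}P^\alpha_{i(i+l)}P^\alpha_{kh}=4P^1_{kh}$, which is $4$ if $h=k+l$ and $0$ otherwise. Combined with your antisymmetry, this yields $A_{ij,(j-l)(i+l)}=-A_{i(i+l),(j-l)j}=-4$.

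Next, the mixed block $(A_{ij,kh})$ is positive semidefinite, being $Q_c$ plus the psd matrix $4\sum_{\alpha=1}^m p_\alpha p_\alpha^T$ with $p_\alpha:=(P^\alpha_{ij})_{ij}\in\mathbb{R}^{l^2}$. Take its principal minor on the rows $ij$, $(j-l)(i+l)$, $kh$ (for $kh$ distinct from the other two; the remaining cases are immediate). It equals $\det\begin{pmatrix}4&-4&a\\-4&4&b\\a&b&4\end{pmatrix}=-4(a+b)^2$ with $a=A_{ij,kh}$, $b=A_{(j-l)(i+l),kh}$, and nonnegativity forces $a+b=0$. This $3\times 3$ minor argument is the missing idea behind the last relation in \eqref{l2-dimA}.
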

\begin{proof}
	For simplicity, we impose the following symmetry conditions on the matrix $A$ for all $i \geq j$ and $k \geq h$:
	$$
	A_{ij,kh} = A_{ij,hk} = A_{ji,kh} := A_{ji,hk}.
	$$
	The same conditions also apply to the matrices $D$, $\widetilde{P}$ and $Q$.
	 Since the monomials \{$x_i^4,\ x_i^3x_j,\ x_i^2x_j^2,\ x_i^2x_jx_k,\ x_ix_jx_kx_h\}_{i, j, k, h \text{ distinct}}$ form a basis for real quartic homogeneous polynomials, $A\in \mathscr{A}$ if and only if, for any $1\leq i,j,k,h\leq n$,
		\begin{equation}\label{AinA}
		\left\{\begin{array}{lll}
			A_{ij,kh}=A_{kh,ij} , \\
			A_{ii,ii}=0=A_{ii,ij}, \\
			A_{ij,ij}+2A_{ii,jj}=0, \\
			A_{ij,ik}+A_{ii,jk}=0 ~ (i,j,k \ \text{distinct}), \\
			A_{ij,kh}+A_{ik,jh}+A_{ih,jk}=0 ~(i,j,k,h \ \text{distinct}).
		\end{array}
		\right.
	\end{equation}
	
	(Necessity): Let the matrix $Q$ be positive semidefinite. This implies that all second-order principal minors of $Q$ are nonnegative. Denote the second-order principal minor of $Q$ formed by rows and columns indexed $ij$ and $kh$ as $$Q\binom{kh}{ij}:=Q_{ij,ij}Q_{kh,kh}-Q_{ij,kh}Q_{kh,ij}.$$ Next, we compute the second-order principal minors $ Q\binom{jj}{ii} $, $ Q\binom{jh}{ii} $, and $ Q\binom{kh}{ij} $ of matrix $ Q $ to determine specific properties of the entries in matrices $ Q $ and $ A $. 
	
	First we have $Q_{ii,ii}=A_{ii,ii}+D_{ii,ii}-\widetilde{P}_{ii,ii}=0$ by \eqref{barP1}, \eqref{Q=A+D-P} and \eqref{AinA}. (The following derivations will repeatedly use \eqref{Q=A+D-P} and \eqref{AinA} without further mention.)
	
	\emph{Case 1:} For any $1\leq i\neq j\leq n$, $Q\binom{jj}{ii} = -Q_{ii,jj}^2 \geq 0$ yields 
	\begin{equation}\label{Qiijj}
	      Q_{ii,jj} = 0.
	\end{equation}

	By \eqref{barP1}, we have
	\begin{align}
		A_{ii,jj}&=Q_{ii,jj}-1+\widetilde{P}_{ii,jj}= \begin{cases}
			-2, &i \leq l ~and ~j>l, \\
			0, &otherwise, 
		\end{cases} 
	\notag\\
		A_{ij,ij}&=-2A_{ii,jj}= \begin{cases}
			4, &i \leq l ~and ~j>l, \\
			0, &otherwise. \label{A_ijij}
		\end{cases}
	\end{align}
	 
	 \emph{Case 2:} For any $1\leq i,j,h\leq n$ with $j\neq h$, $Q\binom{jh}{ii}=-Q_{ii,jh}^2 \geq 0 $ yields 
	 \begin{equation}\label{Qiijh}
	 	Q_{ii,jh}=0.
	\end{equation}	 

	 By \eqref{barP2}, we have
	 \begin{align}
	 	A_{ii,jh}&=Q_{ii,jh}+\widetilde{P}_{ii,jh}= 0, 
	 	\notag\\
	 	A_{ij,ih}&=-A_{ii,jh}=0, \label{A_ab}
	 \end{align}
	where $i,j,h$ are distinct.
	
	\emph{Case 3:} For any $1\leq i\neq j\leq n$ and $1\leq k\neq h\leq n$, $$Q\binom{kh}{ij}=Q_{ij,ij}Q_{kh,kh}-Q_{ij,kh}^2 \geq 0.$$ 

	By \eqref{barP3} and \eqref{A_ijij}, we have 
	\begin{equation*}\label{Q_ijij}
		Q_{ij,ij}=A_{ij,ij}-\widetilde{P}_{ij,ij}= \begin{cases}
			4-4\sum_{\alpha=1}^m (P^{\alpha}_{ij})^2, &i \leq l ~and ~j>l, \\
			0, &otherwise.
		\end{cases}
	\end{equation*}
	\begin{itemize}
		\item If $i<j \leq l$ or $l < i<j$, then $Q_{ij,ij}=0=\widetilde{P}_{ij,kh}$. 
		Hence, $$Q\binom{kh}{ij} = -Q_{ij,kh}^2 = -A_{ij,kh}^2 \geq 0,$$ which implies
	\begin{equation} \label{Q_ijkh for ij}
	    Q_{ij,kh} = A_{ij,kh} = 0\ \text{for }i<j \leq l \text{ 	or } l < i<j.
	\end{equation}
		\item If $k<h \leq l$ or $l < k<h$, by \eqref{Q_ijkh for ij}, then
		\begin{equation}\label{Q_ijkh for kh}
			Q_{ij,kh}=Q_{kh,ij}=0\ \text{for }k<h \leq l \text{ 	or } l < k<h.
		\end{equation}
		\item If $i,k \leq l ~and ~j,h>l$, by \eqref{barP3}, then
		\begin{equation}\label{Q=A-4P}
			Q_{ij,kh}=A_{ij,kh}+D_{ij,kh}-\widetilde{P}_{ij,kh}=A_{ij,kh}-4 \sum _{\alpha=1}^{m} P^{\alpha}_{ij}P^{\alpha}_{kh}.
		\end{equation}
		By \eqref{AinA} and \eqref{Q_ijkh for ij}, we have 
        \begin{equation}\label{jhAntisym}
        A_{ij,kh}=-A_{ik,jh}-A_{ih,kj}=-A_{ih,kj}\ \text{for }i\neq k \text{ and }  j\neq h.
        \end{equation}

	Now, we consider a special case with respect to index $j=i+l$. By the Clifford algebra representation \eqref{Cliffordsys-alg}, we have $P_{i(i+l)}^{1}=1$ and $P_{i(i+l)}^{\alpha +1}=E_{ii}^{\alpha}=0$ for $1\leq\alpha\leq m-1$. Hence, 
	\begin{align*}
		&Q_{i(i+l),i(i+l)}=4-4(P_{i(i+l)}^{1})^2=0,\\
		&Q_{i(i+l),kh}=A_{i(i+l),kh}-4P_{kh}^{1}=\begin{cases}
			A_{i(i+l),k(k+l)}-4, &h=k+l, \\
			A_{i(i+l),kh}, &h \neq k+l.
		\end{cases} 
	\end{align*}
    Thus, 
\begin{equation}\label{Q_ihatikh}
    Q\binom{kh}{i(i+l)} = -Q_{i(i+l),kh}^2 \geq 0 \quad \text{implies} \quad Q_{i(i+l),kh} = 0.
\end{equation}
Consequently,
\begin{equation} \label{A_ihatikh}
    A_{i(i+l),k(k+l)} = 4 \quad \text{and} \quad A_{i(i+l),kh} = 0 \quad \text{for } h \neq k+l.
\end{equation}

\end{itemize}

By \eqref{Q=A-4P}, $Q_c=\(A_{ij,kh}-4 \sum _{\alpha=1}^{m} P^{\alpha}_{ij}P^{\alpha}_{kh}\)_{l^2 \times l^2}$. Since $Q$ is positive semidefinite and $Q_c$ is its principal submatrix, it follows that $Q_c \succeq 0$. Given that
$$\(\sum _{\alpha=1}^{m} P^{\alpha}_{ij}P^{\alpha}_{kh}\)_{l^2 \times l^2} =
\sum _{\alpha=1}^{m}\( P^{\alpha}_{ij}\)_{l^2 \times 1} \(P^{\alpha}_{kh}\)^T_{1 \times l^2}\succeq 0,$$ 
the positive semidefiniteness of $Q_c$  implies $(A_{ij,kh})_{l^2 \times l^2} \succeq 0$ for $i,k \leq l$ and $j,h>l$.  By \eqref{jhAntisym} and \eqref{A_ihatikh}, 
$$A_{ij,(j-l)(i+l)}=-4 \ \text{ for } 1\leq i\neq (j-l)\leq l.$$
Further, by direct calculation, the third-order principal minor of $(A_{ij,kh})_{l^2 \times l^2}$ formed by rows and columns indexed $ij$, $kh$, and $(j-l)(i+l)$ equals $-4(A_{ij,kh}+A_{(j-l)(i+l),kh})^2$. Hence,
\begin{equation} \label{A_hatji}
	A_{ij,kh}=-A_{(j-l)(i+l),kh} ~(j \neq i+l).
\end{equation}
By \eqref{A_ab}, \eqref{jhAntisym}, \eqref{A_ihatikh} and \eqref{A_hatji}, we have
\begin{equation}\label{A_ijkh=-A_ihkj}
	A_{ij,kh}=-A_{ih,kj}\ \text{for }1\leq i\neq k\leq l \text{ and }  l< j, h\leq n.
\end{equation}

In summary, equations \eqref{AinA}, \eqref{A_ijij}, \eqref{A_ab}, \eqref{A_ihatikh}, \eqref{A_hatji} and \eqref{A_ijkh=-A_ihkj} collectively yield condition \eqref{condition of A}, while equations \eqref{Qiijj}, \eqref{Qiijh}, \eqref{Q_ijkh for ij} and \eqref{Q_ijkh for kh} establish condition \eqref{CONSTofQ}. 
Thus we complete the proof of necessity.
	
	(Sufficiency): Assume $A$ and $Q$ satisfy \eqref{condition of A} and \eqref{CONSTofQ}. It follows that $Q$ is supported on the principal submatrix corresponding to $Q_c$, with all other entries being zero. Hence, the positive semidefiniteness of $Q_c$ guarantees the positive semidefiniteness of $Q$.

	On the other hand, by \eqref{barP1}--\eqref{Q=A+D-P}, $Q$ satisfies \eqref{CONSTofQ} if and only if 
	\begin{align*}
		A_{ii,kk}&=  \begin{cases}
			0, &i,k \leq l ~or ~i,k > l, \\
			-2, &otherwise,
		\end{cases} \\
		A_{ii,kh}&=0, \\
		A_{ij,kh}&= 
		\begin{cases}
			A_{ij,kh}, &i,k \leq l ~and ~j,h > l, \\
			0, &otherwise,
		\end{cases}
	\end{align*}
	for $i\neq j, k\neq h$. Together with \eqref{l2-dimA}, this equivalence can be shown to imply \eqref{AinA} by a straightforward verification, so that $A \in \mathscr{A}$.
\end{proof}

\begin{rem}\label{R(Q)=R(Qc)}
	It follows directly from \eqref{CONSTofQ} that $\mathrm{rank}(Q) = \mathrm{rank}(Q_c)$. Moreover, $Q_c$ has at least $l$ zero rows and $l$ zero columns,
	as the $i,(i+l)$-th rows and $k,(k+l)$-th columns of $Q$ are entirely zero for any $1\leq i,k\leq l$ by \eqref{Q_ihatikh}.
\end{rem}

In the following, we always assume that $Q$ satisfies \eqref{CONSTofQ} of Lemma \ref{AP-psd}.

Before proving Lemma \ref{BR-psd}, we introduce the following notation. Let $E_0:=I_l$. Let $\{v_q\}_{q=1}^l \subset \mathbb{R}^l$ and $\{w_{\alpha}\}_{\alpha=1}^m \subset \mathbb{R}^m$ be the standard basis row vectors, meaning the $q$-th component of $v_q$ and the $\alpha$-th component of $w_{\alpha}$ are $1$, with all other components being $0$. For each $q$ with $1 \leq q \leq l$, we form a matrix $R_q \in M(m \times l,~\R)$ by taking the $q$-th row of each matrix $E_0, \cdots, E_{m-1}$ (see \eqref{Cliffordsys-alg}), arranging them in order as row vectors, and combining them into a new matrix, i.e., 
\begin{equation}\label{Define R_q}
	R_q:=\begin{pmatrix}
		v_q E_0 \\
		\vdots \\
		v_q E_{m-1}
	\end{pmatrix}, \quad 1 \leq q \leq l.
\end{equation}
Define
\begin{equation}\label{Define R}
	R:=(R_1, \cdots, R_l) \in M(m \times l^2,~\R).
\end{equation} 
For each $1\leq \alpha\leq m$ and $1\leq i,j\leq l$, let $E_{\alpha-1}=(E^{\alpha-1}_{ij})_{l\times l}$ and $R=(R_{\alpha,ij})_{m\times l^2}$ where $ R_{\alpha,ij} $ denotes the entry of $ R $ at the $ \alpha $-th row and $ ((i-1)l + j) $-th column. Then we have $ R_{\alpha,ij}=E^{\alpha-1}_{ij} $. Hence 
$$R^{T}R=\(\sum_{\alpha=1}^m E^{\alpha-1}_{ij}E^{\alpha-1}_{kh}\)_{l^2 \times l^2}=\(\sum_{\alpha=1}^m P^{\alpha}_{i(j+l)}P^{\alpha}_{k(h+l)}\)_{l^2 \times l^2}.$$
Note that throughout the paper the indices $ij$ and $kh$ follow the lexicographic order
(i.e., $\{11,12,\dots,1l,\dots,l1,\dots,ll\}$).

From now on, let $1\leq i,j,k,h \leq l$, we denote $B=\(b_{ij,kh}\)_{l^2 \times l^2}:=\frac{1}{4}\(A_{i(j+l),k(h+l)}\)_{l^2 \times l^2}$. 
If $A\in \mathscr{A}$ and $Q$ is positive semidefinite, by \eqref{l2-dimA}, then the entries of $B$ satisfy
    \begin{empheq}[left=\empheqlbrace]{align}
		b_{ij,ij}&=1, \quad b_{ij,ih}=0  ~(h \neq j), \label{b_ijij}\\
		b_{ii,kk}&=1, \quad  b_{ii,kh}=0  ~(h \neq k), \label{b_iikk}\\
		b_{ij,kh}&=b_{kh,ij}, \label{b_sym}\\
		b_{ij,kh}&=-b_{ih,kj}  ~(k \neq i), \label{b_anti}\\
		b_{ij,kh}&=-b_{ji,kh} ~(j \neq i). \label{b_ji}
	\end{empheq}

Using the notations defined above, we can rewrite Lemma \ref{AP-psd} as:
\begin{lem}\label{BR-psd}
	$A\in \mathscr{A}$ and $Q$ is positive semidefinite if and only if the matrix $B$ satisfies \eqref{b_ijij}--\eqref{b_ji} and the matrix 
	$(B-R^{T}R)$
	is positive semidefinite.
\end{lem}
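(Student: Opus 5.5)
This is essentially a translation of Lemma~\ref{AP-psd} into the variables $B$ and $R$, so I would prove it by matching the two sets of conditions one by one.

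\textbf{Step 1: the PSD condition.} Restrict to indices $1\le i,k\le l$ and $l<j,h\le 2l$, and write $j=j'+l$, $h=h'+l$ with $1\le j',h'\le l$. By definition of $B$,
\[
A_{i(j'+l),k(h'+l)}=4\,b_{ij',kh'} .
\]
For the Clifford sum, the formula for $R^TR$ stated before the lemma gives
\[
\sum_{\alpha=1}^m P^{\alpha}_{i(j'+l)}P^{\alpha}_{k(h'+l)}=(R^TR)_{ij',kh'} .
\]
So, with the lexicographic ordering of $ij'$ matched to the ordering of rows of $Q_c$, we get $Q_c=4(B-R^TR)$. Hence condition \eqref{A-P} is equivalent to $B-R^TR\succeq0$.

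\textbf{Step 2: the linear constraints.} I would check that \eqref{l2-dimA}, rewritten in the index pair $(i,j')$, is exactly \eqref{b_ijij}--\eqref{b_ji}:
\begin{itemize}
\item $A_{ij,ij}=4$ and $A_{ij,ih}=0$ ($h\ne j$) become \eqref{b_ijij}.
\item $A_{i(i+l),k(k+l)}=4$ and $A_{i(i+l),kh}=0$ ($h\neq k+l$) become \eqref{b_iikk}.
\item Symmetry of $A$ becomes \eqref{b_sym}.
\item $A_{ij,kh}=-A_{ih,kj}$ ($k\ne i$) becomes $b_{ij',kh'}=-b_{ih',kj'}$, which is \eqref{b_anti}.
\item $A_{ij,kh}=-A_{(j-l)(i+l),kh}$ ($j\neq i+l$) becomes $b_{ij',kh'}=-b_{j'i,kh'}$ ($j'\ne i$), which is \eqref{b_ji}.
\end{itemize}
Each correspondence is a bijective relabeling, so the two lists are equivalent.

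\textbf{Step 3: the entries of $Q$ outside $Q_c$.} The remaining requirement of Lemma~\ref{AP-psd} is condition \eqref{CONSTofQ}, the vanishing of the other entries of $Q$. This holds by the standing assumption made right after Remark~\ref{R(Q)=R(Qc)}. Note also that, given this assumption, the entries of $A$ outside the $l^2\times l^2$ block are forced to be the values listed in the sufficiency part of that proof. So $A$ is determined by $B$, and conversely any $B$ satisfying \eqref{b_ijij}--\eqref{b_ji} defines such an $A$.

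Combining the three steps with Lemma~\ref{AP-psd} gives the equivalence in both directions. The only delicate point is the bookkeeping of the index shift $j\mapsto j+l$ in Step 2, in particular confirming that the hypothesis $j\neq i+l$ corresponds to $j'\neq i$. There is no genuine obstacle.
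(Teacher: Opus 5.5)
Your proposal is correct and follows the same route as the paper. You identify $Q_c=4(B-R^TR)$ from the formula for $R^TR$, check that the linear conditions \eqref{l2-dimA} translate under $j\mapsto j+l$ exactly into \eqref{b_ijij}--\eqref{b_ji}, and use the standing assumption \eqref{CONSTofQ} before applying Lemma~\ref{AP-psd}; your index-by-index check spells out what the paper calls ``readily verified.''
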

\begin{proof}
	It is readily verified that \eqref{l2-dimA} is equivalent to \eqref{b_ijij}--\eqref{b_ji}.
	And, as previously assumed, \eqref{CONSTofQ} always holds. 
	Therefore, the conclusion follows immediately from Lemma \ref{AP-psd} by noting that 
	\begin{equation}\label{B-RTR}
		B-R^{T}R=\dfrac{1}{4}\(A_{i(j+l),k(h+l)}-4\sum_{\alpha=1}^m P^{\alpha}_{i(j+l)}P^{\alpha}_{k(h+l)}\)_{l^2 \times l^2}=\dfrac{1}{4}Q_c.
	\end{equation} 
\end{proof}

Let 
\begin{equation}\label{widetilde-X}
	\widetilde{X}:=\bigl(x_i x_{l+j}\bigr)_{1\le i,j\le l}\in\mathbb R^{l^2}
\end{equation}
ordered lexicographically by $(i,j)$.
Then, directly from Remark \ref{R(Q)=R(Qc)} and \eqref{B-RTR}, we obtain:
\begin{prop}\label{R(Q)=R(B-RTR)}
	\[
	\mathrm{rank}(Q)=\mathrm{rank}\bigl(B-R^{T}R\bigr),
	\]
	and moreover
	\[
	G_F(x)=X^{T}QX=4\,\widetilde{X}^{T}\bigl(B-R^{T}R\bigr)\widetilde{X}.
	\]
\end{prop}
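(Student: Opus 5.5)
The plan is to deduce both assertions from the block structure of $Q$ established in Lemma~\ref{AP-psd}(\ref{CONSTofQ}) and the identification \eqref{B-RTR}. We work under the standing assumption that $Q=A+D-\widetilde{P}$ satisfies condition (\ref{CONSTofQ}), and we may also assume the entries of $A$ obey \eqref{l2-dimA} when needed.

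\textbf{Rank identity.} Reorder the index set of $\bar n\times\bar n$ matrices so that the $l^2$ indices $ij$ with $1\le i\le l<j\le 2l$ come first. By condition (\ref{CONSTofQ}), every entry $Q_{ij,kh}$ with $(ij,kh)$ outside this block vanishes. Hence $Q$ is permutation-similar to $\odiag(Q_c,0)$, which gives $\mathrm{rank}(Q)=\mathrm{rank}(Q_c)$; this is Remark~\ref{R(Q)=R(Qc)}. Relabel the block indices by $(i,j)\mapsto (i,j-l)$ with $1\le i,j\le l$. Then \eqref{B-RTR} states that $Q_c=4(B-R^{T}R)$, since $R^TR$ has entries $\sum_\alpha P^\alpha_{i(j+l)}P^\alpha_{k(h+l)}$. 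Multiplying by the nonzero scalar $4$ does not change rank, so $\mathrm{rank}(Q)=\mathrm{rank}(B-R^TR)$.

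\textbf{Representation.} Start from $G_F=X^T(D-\widetilde P)X$, which is \eqref{D-P}. For $A\in\mathscr A$ we have $X^TAX=0$, so $G_F=X^TQX$. By the block structure above,
\[
X^TQX=\sum_{1\le i,k\le l<j,h\le 2l} Q_{ij,kh}\,(x_ix_j)(x_kx_h).
\]
There is one point to check here. The vector $X$ contains each mixed monomial $x_ix_j$ with $i<j$ exactly once, and for $i\le l<j$ we indeed have $i<j$. Consequently the coordinates of $X$ indexed by this block are exactly the entries $x_ix_{l+j'}$ of $\widetilde X$, in the same lexicographic order. The symmetrization convention in the proof of Lemma~\ref{AP-psd} does not introduce extra factors, because we only sum over the listed index pairs. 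Substituting $Q_c=4(B-R^TR)$ then yields $G_F=4\widetilde X^T(B-R^TR)\widetilde X$.

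No step here is a real obstacle. The only care needed is the bookkeeping just described, namely matching the ordering of the block indices of $Q_c$ with that of $\widetilde X$ and confirming that no off-block entry of $Q$ contributes.
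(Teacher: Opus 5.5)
Your proposal is correct and follows the paper's route. The rank identity is Remark~\ref{R(Q)=R(Qc)} combined with $Q_c=4(B-R^{T}R)$ from \eqref{B-RTR}, and the representation holds because $Q$ is supported on the $Q_c$ block, whose indices $i\le l<j$ match the coordinates of $\widetilde{X}$ in lexicographic order; your explicit check of this ordering and of $X^{T}AX=0$ just fills in what the paper states as following ``directly''.
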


 Let the matrix $B$ be partitioned into $l \times l$ blocks $\(B_{ik}\)_{i,k=1}^{l}$, where each $B_{ik}$ is an $l \times l$ matrix whose $(j,h)$-entry is given by 
 \begin{equation}\label{Bik_jh=bijkh}
 	(B_{ik})_{jh} = b_{ij,kh}.
 \end{equation}
\begin{lem}\label{BRthm}
	The matrix 
	$B-R^{T}R$
	is positive semidefinite if and only if $B$ is positive semidefinite and $R_{i}B_{ij}=R_{j}$ for $1 \leq i,j \leq l$. 
\end{lem}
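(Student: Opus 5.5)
The plan is to pass through the block matrix
\[
M:=\begin{pmatrix} I_m & R\\ R^{T} & B\end{pmatrix},
\]
and to use two facts about $R$ coming from the Clifford relations. First, $R_iR_i^{T}=I_m$ for every $i$. Indeed, the $(\alpha,\beta)$-entry of $R_iR_i^{T}$ is $v_iE_{\alpha-1}E_{\beta-1}^{T}v_i^{T}=(E_{\alpha-1}E_{\beta-1}^{T})_{ii}$. For $\alpha=\beta$ this equals $1$, since the $E$'s are orthogonal. For $\alpha\neq\beta$ the matrix $E_{\alpha-1}E_{\beta-1}^{T}$ is skew-symmetric, because $E_0=I_l$, the $E_\alpha$ are skew, and distinct $E_\alpha$ anticommute; hence its diagonal entries vanish. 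Consequently $RR^{T}=\sum_i R_iR_i^{T}=l\,I_m$.

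Second, the constraints \eqref{b_ijij} say exactly that the diagonal blocks satisfy $B_{ii}=I_l$. By \eqref{b_sym} we also have $B_{ji}=B_{ij}^{T}$. Since $I_m\succ 0$, the Schur complement criterion gives $B-R^{T}R\succeq0 \iff M\succeq0$.

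\emph{Necessity.} Assume $B-R^{T}R\succeq0$. Then $B=(B-R^{T}R)+R^{T}R\succeq0$ immediately.

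For the linear relations, fix $i$ and $y\in\mathbb{R}^m$. Let $u$ be the vector whose first ($\mathbb{R}^m$) component is $-y$, whose $i$-th $l$-block is $R_i^{T}y$, and whose other blocks are zero. Using $R_iR_i^{T}=I_m$ and $B_{ii}=I_l$,
\[
u^{T}Mu=|y|^2-2y^{T}R_iR_i^{T}y+y^{T}R_iB_{ii}R_i^{T}y=|y|^2-2|y|^2+|y|^2=0 .
\]
Since $M\succeq0$, this forces $Mu=0$. Reading off the $j$-th $l$-block row of $Mu=0$ gives $-R_j^{T}y+B_{ji}R_i^{T}y=0$ for all $y$. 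Hence $B_{ji}R_i^{T}=R_j^{T}$, and transposing yields $R_iB_{ij}=R_j$.

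\emph{Sufficiency.} Assume $B\succeq0$ and $R_iB_{ij}=R_j$ for all $i,j$. Summing over $i$, the $j$-th block of $RB$ is $\sum_iR_iB_{ij}=lR_j$, so $RB=lR$.

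Put $P:=\frac1l R^{T}R$. Because $RR^{T}=lI_m$, $P$ is an orthogonal projection. Then
\[
BP=\tfrac1l (RB)^{T}R=R^{T}R=lP,
\]
and transposing gives $PB=lP$ as well, so $B$ commutes with $P$. Therefore
\[
B-R^{T}R=B-lP=B-BP=B(I-P)=(I-P)B(I-P)\succeq0 .
\]

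The main obstacle I expect is the necessity direction: extracting the exact linear identities $R_iB_{ij}=R_j$ from mere semidefiniteness. The device above handles it by producing an explicit null vector of $M$, and it works only because of the normalizations $B_{ii}=I_l$ and $R_iR_i^{T}=I_m$. Those two identities, especially the Clifford computation for $R_iR_i^{T}$, are what I will verify carefully.
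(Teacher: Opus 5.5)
Your proof is correct. The necessity direction follows the paper's own route, and the sufficiency direction takes a different one.

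\textbf{Necessity.} The paper also passes to $\begin{pmatrix} I_m & R\\ R^{T} & B\end{pmatrix}$ by the Schur complement. For a fixed $i$, it then applies the block congruence that subtracts $R_i$ times the $(i+1)$-st block row and column from the first. This produces a zero upper-left block whose off-diagonal blocks are $R_j-R_iB_{ij}$, which must then vanish. Your vector $u$ is, up to sign, the first block column of that congruence applied to $y$. So $u^{T}Mu=0$ is exactly the vanishing of the new upper-left block, and it uses $B_{ii}=I_l$ and $R_iR_i^{T}=I_m$ in the same way.

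\textbf{Sufficiency.} The paper reads the same congruence backwards: if $R_iB_{ij}=R_j$ for one fixed $i$ and all $j$, the congruent matrix is $\mathrm{diag}(0,B)\succeq 0$. You instead sum the relations to get $RB=lR$. You then use $RR^{T}=lI_m$ to make $P=\frac1l R^{T}R$ an orthogonal projection commuting with $B$, and derive the explicit identity $B-R^{T}R=(I-P)B(I-P)$.

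\textbf{Comparison.} The paper's approach is more uniform: one congruence handles both directions, and sufficiency needs the relations for only a single $i$. Your approach avoids the Schur complement in that direction and gives more information. Since $B=lP+(I-P)B(I-P)$ with orthogonal ranges, it yields $\mathrm{rank}(B-R^{T}R)=\mathrm{rank}(B)-m$ immediately. The paper proves this separately later, in Lemma~\ref{rank(B)-m}, via a spectral decomposition that is essentially your projection argument.
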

\begin{proof}
	We first note that
	\[
	B-R^TR\succeq0
	\quad\Longleftrightarrow\quad
	\mathcal{B}:=\begin{pmatrix} I_m & R\\ R^T & B\end{pmatrix}\succeq0,
	\]
	since $I_m$ is positive definite and $B-R^TR$ is the Schur complement of $I_m$ in $\mathcal{B}$.
	
	By \eqref{Define R_q}, we have
	\[
	R_qR_q^T=\bigl(v_qE_{\alpha-1}E_{\beta-1}^Tv_q^T\bigr)_{\alpha,\beta=1}^m.
	\]
	Now $E_0=I_l$, each $E_{\alpha-1}$ is orthogonal, and for $\alpha\neq\beta$ the matrix
	$E_{\alpha-1}E_{\beta-1}^T$ is skew-symmetric by the Clifford relations. Hence
	\[
	v_qE_{\alpha-1}E_{\beta-1}^Tv_q^T=
	\begin{cases}
		1,& \alpha=\beta,\\
		0,& \alpha\neq\beta,
	\end{cases}
	\]
	and therefore
	\begin{equation}\label{R_qR^T_q}
		R_qR_q^T=I_m
	\end{equation}
	for $1\le q\le l$.
	Since $B = (B_{ik})_{i,k=1}^{l}$ and $R = (R_1, \cdots, R_l)$, we can view $\mathcal{B}$ as an $(l+1) \times (l+1)$ block matrix and perform elementary row and column operations on it to annihilate the upper-left identity submatrix while preserving the lower-right block $B$. Specifically, for any $1\leq i\leq l$,
	\begin{itemize}
		\item left-multiply the $(i+1)$-th row by $-R_i$ and add it to the first row;
		\item right-multiply the $(i+1)$-th column by $-R_i^{T}$ and add it to the first column.
	\end{itemize}
	Hence we get 
	\begin{equation}\label{RBij}
		\begin{pmatrix}
			0 &  (R_j-R_iB_{ij})_{j=1}^l \\
			{(R_j-R_iB_{ij})_{j=1}^l} ^{T} & B
		\end{pmatrix},
	\end{equation}
	where $(R_j-R_iB_{ij})_{j=1}^l$ is an $1\times l$ block matrix with its $j$-th block being the $m \times l$ matrix $(R_j-R_iB_{ij})$.
	The block matrix \eqref{RBij} is positive semidefinite if and only if $B$ is positive semidefinite and $R_{i}B_{ij}=R_{j}$ for $1 \leq i,j \leq l$.
\end{proof}

Combining Lemmas \ref{SDP}, \ref{BR-psd} and \ref{BRthm}, one easily obtains:

\begin{prop}\label{QBcor}
	The \emph{psd} form $G_F$ in \eqref{nonnegativepolyG} on $\mathbb{R}^{n}$ is \emph{sos} if and only if there exists an $l^2 \times l^2$ matrix $B$ satisfying
	\begin{enumerate}
		\item conditions \eqref{b_ijij}--\eqref{b_ji}; \label{Bentry}
		\item $R_{i}B_{ij}=R_{j}$ for $1 \leq i,j \leq l$; \label{BRrelation}
		\item $B$ is positive semidefinite. \label{Bpsd}
	\end{enumerate}
\end{prop}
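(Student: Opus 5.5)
The statement to be proved is Proposition~\ref{QBcor}, and the plan is simply to chain the three preceding equivalences. First, by Lemma~\ref{SDP} together with the reformulation given right after it, $G_F$ is \emph{sos} if and only if there exists $A\in\mathscr{A}$ such that $Q=A+D-\widetilde{P}\succeq 0$.

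Second, Lemma~\ref{AP-psd} characterizes exactly when $A\in\mathscr{A}$ and $Q\succeq0$ hold simultaneously, and Lemma~\ref{BR-psd} restates this in the variable $B=\frac14\bigl(A_{i(j+l),k(h+l)}\bigr)$. The condition becomes: \eqref{b_ijij}--\eqref{b_ji} hold and $B-R^TR\succeq0$. Third, Lemma~\ref{BRthm} shows that $B-R^TR\succeq0$ is equivalent to $B\succeq0$ together with $R_iB_{ij}=R_j$ for all $i,j$.

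Substituting these equivalences in turn yields conditions (1)--(3) of the proposition.

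The step that needs care is the passage between $A$ and $B$ in both directions.

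\emph{Forward direction.} Starting from an $A$ that works, define $B$ as above. The remark preceding Lemma~\ref{BR-psd} shows that \eqref{l2-dimA} gives \eqref{b_ijij}--\eqref{b_ji}.

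\emph{Backward direction.} Starting from a $B$ satisfying (1)--(3), construct $A$ as follows:
\begin{itemize}
\item on the block with $i,k\le l$ and $j,h>l$, set $A_{i(j+l),k(h+l)}:=4b_{ij,kh}$;
\item set $A_{ii,kk}=-2$ when $i$ and $k$ lie on opposite sides of $l$;
\item set all other entries to $0$.
\end{itemize}
These are precisely the values forced in the sufficiency part of Lemma~\ref{AP-psd}, so condition \eqref{CONSTofQ} holds for this $A$. The equivalence of \eqref{l2-dimA} with \eqref{b_ijij}--\eqref{b_ji} gives condition (1) of Lemma~\ref{AP-psd}. Finally, $Q_c=4(B-R^TR)\succeq0$ by Lemma~\ref{BRthm}, so Lemma~\ref{AP-psd} yields $A\in\mathscr{A}$ and $Q\succeq0$, and hence $G_F$ is \emph{sos}.

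I do not expect any real obstacle, since every step is an earlier lemma. The only point worth double-checking is that the standing assumption ``$Q$ satisfies \eqref{CONSTofQ}'' used in Lemma~\ref{BR-psd} does not cost generality. It does not: in the forward direction it is a consequence of $Q\succeq0$ by the necessity part of Lemma~\ref{AP-psd}, and in the backward direction it is built into the construction of $A$.
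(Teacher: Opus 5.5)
Your proposal is correct and is the paper's own argument: the paper derives Proposition~\ref{QBcor} by chaining Lemma~\ref{SDP}, Lemma~\ref{BR-psd} (which is Lemma~\ref{AP-psd} rewritten in the variable $B$) and Lemma~\ref{BRthm}, and your explicit reconstruction of $A$ from $B$, together with your remark that the standing assumption \eqref{CONSTofQ} costs no generality, just spells out what the paper dismisses with ``one easily obtains.'' One cosmetic slip: in your first bullet the index ranges should read $1\le i,j,k,h\le l$, since with $j,h>l$ the index $j+l$ would exceed $n=2l$.
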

Proposition~\ref{QBcor} gives a preliminary SDP characterization of the \emph{sos} property of $G_F$ in terms of the matrix $B$. To prove the more concise characterization in Theorem~\ref{thm:SDP-feasible-B}, we next analyze the structural properties of matrices $B$ satisfying the conditions in Proposition~\ref{QBcor}. These properties will allow us to simplify the constraints in Proposition~\ref{QBcor} and thereby complete the proof of Theorem~\ref{thm:SDP-feasible-B}. They will also be used later in the analysis of the \emph{sos} and non-\emph{sos} cases.

\begin{lem}\label{properties of B}
	If $B=(B_{ik})_{i,k=1}^{l}$ satisfies \eqref{b_ijij}--\eqref{b_ji}, then $B_{ii}=I_l$,
	the matrix $B$ is symmetric, each off-diagonal block $B_{ik}$ is skew-symmetric, and
	$B_{ki}=-B_{ik}$ for $i\neq k$.
\end{lem}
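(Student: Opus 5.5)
The plan is to translate each of the identities \eqref{b_ijij}--\eqref{b_ji} into a statement about the blocks via \eqref{Bik_jh=bijkh}, $(B_{ik})_{jh}=b_{ij,kh}$, and read off the four claims one at a time.

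\emph{Step 1: the diagonal blocks.} For the diagonal blocks we have $(B_{ii})_{jh}=b_{ij,ih}$. By \eqref{b_ijij} this equals $1$ when $h=j$ and $0$ when $h\neq j$. Hence $B_{ii}=I_l$.

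\emph{Step 2: symmetry of $B$.} Symmetry of the whole matrix is exactly \eqref{b_sym}, since $b_{ij,kh}$ is the entry in row $ij$ and column $kh$. In block form this says $B_{ki}=B_{ik}^T$.

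\emph{Step 3: skew-symmetry of the off-diagonal blocks.} Fix $i\neq k$. Then
\[
(B_{ik})_{hj}=b_{ih,kj}=-b_{ij,kh}=-(B_{ik})_{jh}
\]
by \eqref{b_anti}, which applies because $k\neq i$. This holds for all $j,h$, including $j=h$, where it forces the diagonal entries to vanish. Hence $B_{ik}^T=-B_{ik}$.

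\emph{Step 4: combining.} For $i\neq k$, Steps 2 and 3 give
\[
B_{ki}=B_{ik}^T=-B_{ik}.
\]

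\emph{Role of the remaining conditions.} Conditions \eqref{b_iikk} and \eqref{b_ji} are not needed for these four claims. They are consistent with them; for example, \eqref{b_iikk} is compatible with $B_{ii}=I_l$ together with skew-symmetry of $B_{ik}$, which forces $b_{ii,kk}$-type entries to match. The only point that needs care is that the antisymmetry \eqref{b_anti} must be invoked under its hypothesis $k\neq i$. That is exactly why the statement restricts skew-symmetry to off-diagonal blocks, while the diagonal blocks are instead the identity. So no step is a real obstacle; the argument is bookkeeping with the index convention.
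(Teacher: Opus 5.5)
Your proof is correct and follows the paper's argument exactly. Via $(B_{ik})_{jh}=b_{ij,kh}$ you read off $B_{ii}=I_l$ from \eqref{b_ijij}, symmetry of $B$ from \eqref{b_sym}, and skew-symmetry of the off-diagonal blocks from \eqref{b_anti}, then combine to get $B_{ki}=B_{ik}^T=-B_{ik}$. (Your closing remark that \eqref{b_iikk} is ``compatible'' is vague but harmless, since, as you say, neither it nor \eqref{b_ji} is needed.)
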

\begin{proof}
	Note that $(B_{ik})_{jh} = b_{ij,kh}$ by \eqref{Bik_jh=bijkh}. Thus $B_{ii}=I_l$ by \eqref{b_ijij}, $B$ is symmetric by \eqref{b_sym} and  $B_{ik}$ ($i \neq k$) is skew-symmetric  by \eqref{b_anti}, which implies 
	$$B_{ki}=B_{ik}^T=-B_{ik}\ (i \neq k).$$
\end{proof}

Lemma~\ref{properties of B} shows that the conditions \eqref{b_ijij}--\eqref{b_ji} impose a rigid block structure on $B$: the diagonal blocks are identity matrices, while the off-diagonal blocks are skew-symmetric. We next introduce the involutions $\tau_k$, which provide a convenient way to describe certain special skew-symmetric blocks that will arise from the relations $R_iB_{ij}=R_j$. The following lemma makes this connection precise.

For $s\in\mathbb{N}^+$ and $1\le k\le s$, let
\[
I_s^{(k)}:=\mathrm{diag}(1,\ldots,1,\!-1,1,\ldots,1)\in M(s,\mathbb{R}),
\]
where the entry $-1$ appears in the $k$-th diagonal position.
For each fixed $k\in\mathbb{N}^+$ and each $s\ge k$, we define a map (still denoted by $\tau_k$)
\begin{equation}\label{tau_{k}}
	\tau_k: M(s,\mathbb{R})\longrightarrow M(s,\mathbb{R}),\qquad
	\tau_k(E):=I_s^{(k)}\,E\,I_s^{(k)}.
\end{equation}
Equivalently, $\tau_k$ multiplies both the $k$-th row and the $k$-th column of $E$ by $-1$.

Recall that $\{v_q\}_{q=1}^l \subset \mathbb{R}^l$ and $\{w_{\alpha}\}_{\alpha=1}^m \subset \mathbb{R}^m$ are the standard basis row vectors, defined such that the $q$-th component of $v_q$ and the $\alpha$-th component of $w_{\alpha}$ equal $1$, while all other components are $0$.

\begin{lem}\label{Btauthm}
	Assume that $B=\(B_{ik}\)_{i,k=1}^{l}$ satisfies \eqref{b_ijij}--\eqref{b_ji}. Given $1 \leq i \neq j \leq l$ and $2 \leq \alpha \leq m$, if 
	\begin{equation}\label{v_j B_{ik}}
		v_j B_{ik}=\pm w_{\alpha} R_k,\quad \forall 1 \leq k \leq l,
	\end{equation}
	then $B_{ij}=\mp \tau_{i}(E_{\alpha-1})=\mp \tau_{j}(E_{\alpha-1})$.
\end{lem}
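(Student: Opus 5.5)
The plan is to translate hypothesis \eqref{v_j B_{ik}} into entrywise identities for $b_{\cdot,\cdot}$, and then use the symmetries \eqref{b_ijij}--\eqref{b_ji} to compute every entry of $B_{ij}$.

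\textbf{Step 1 (entrywise hypothesis).} By \eqref{Bik_jh=bijkh}, the row vector $v_jB_{ik}$ has $h$-th entry $b_{ij,kh}$. By \eqref{Define R_q}, $w_\alpha R_k=v_kE_{\alpha-1}$ has $h$-th entry $E^{\alpha-1}_{kh}$. So the hypothesis says exactly
\[
b_{ij,kh}=\pm E^{\alpha-1}_{kh}\qquad\text{for all } 1\le k,h\le l .
\]
The target is $(B_{ij})_{ph}=b_{ip,jh}$, for all $p,h$.

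\textbf{Step 2 (rows $i$ and $j$ of $E_{\alpha-1}$).} Take $k=i$ in the identity of Step 1. By \eqref{b_ijij} we have $b_{ij,ij}=1$ and $b_{ij,ih}=0$ for $h\ne j$. Hence row $i$ of $E_{\alpha-1}$ equals $\pm v_j$.

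Since $E_{\alpha-1}$ ($\alpha\ge2$) is skew-symmetric and orthogonal, this forces:
\begin{itemize}
\item row $j$ of $E_{\alpha-1}$ equals $\mp v_i$;
\item column $i$ of $E_{\alpha-1}$ is supported only at position $j$;
\item column $j$ of $E_{\alpha-1}$ is supported only at position $i$.
\end{itemize}
It follows that $\tau_i(E_{\alpha-1})=\tau_j(E_{\alpha-1})$. Indeed, both maps leave untouched the entries outside rows and columns $i,j$. Inside those rows and columns the only nonzero entries are $(i,j)$ and $(j,i)$, and each of these gets flipped exactly once by either $\tau_i$ or $\tau_j$.

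\textbf{Step 3 (generic entries).} For $p\notin\{i,j\}$, chain the symmetries:
\[
b_{ip,jh}\overset{\eqref{b_ji}}{=}-b_{pi,jh}\overset{\eqref{b_anti}}{=}b_{ph,ji}\overset{\eqref{b_sym}}{=}b_{ji,ph}\overset{\eqref{b_ji}}{=}-b_{ij,ph}=\mp E^{\alpha-1}_{ph}.
\]
The index conditions $p\ne i$, $j\ne p$ and $i\ne j$ are exactly what each step needs. This agrees with $\mp\tau_i(E_{\alpha-1})_{ph}$ in two cases:
\begin{itemize}
\item if $h\ne i$, the map $\tau_i$ does not change the entry $(p,h)$;
\item if $h=i$, Step 2 gives $E^{\alpha-1}_{pi}=0$, so both sides vanish.
\end{itemize}

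\textbf{Step 4 (rows $p=i$ and $p=j$).} For $p=i$, \eqref{b_iikk} gives $b_{ii,jh}=\delta_{jh}$. By Step 2, $\mp\tau_i(E_{\alpha-1})_{ih}=\pm E^{\alpha-1}_{ih}=\delta_{jh}$, so the two agree.

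For $p=j$, Step 1 with $k=j$ gives $b_{ij,jh}=\pm E^{\alpha-1}_{jh}$. This vanishes for $h\ne i$ and equals $-1$ at $h=i$. On the other side, $\mp\tau_i(E_{\alpha-1})_{jh}$ also vanishes for $h\ne i$. At $h=i$ it equals $\pm E^{\alpha-1}_{ji}=-1$.

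\textbf{Main obstacle.} The only delicate point is the bookkeeping of indices in the symmetry chain of Step 3 and the sign conventions of $\tau_i$. The key structural input is Step 2: the orthogonality and skew-symmetry of $E_{\alpha-1}$ make rows $i,j$ of $E_{\alpha-1}$ degenerate. This is exactly what makes the boundary entries consistent with the $\tau$-twist and gives $\tau_i=\tau_j$ on $E_{\alpha-1}$.
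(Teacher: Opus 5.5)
Your proof is correct and follows essentially the same route as the paper: the same symmetry chain \eqref{bikjh=-bijkh} for rows $p\notin\{i,j\}$, \eqref{b_iikk} for row $i$, and the hypothesis for row $j$. The only difference is in the final sign bookkeeping. The paper reads off the degenerate structure of rows and columns $i,j$ from $B_{ij}$ itself, using its skew-symmetry, and concludes with $B_{ij}=I_l^{(j)}B_{ij}I_l^{(i)}$. You instead extract that structure from $E_{\alpha-1}$, using its skew-symmetry and orthogonality, and then compare entrywise.
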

\begin{proof}
	For $B_{ij}=\(b_{ik,jh}\)_{k,h=1}^l$, when $k \notin \{i, j\}$, 
	\begin{equation}\label{bikjh=-bijkh}
		b_{ik,jh}\overset{\eqref{b_ji}}{=}-b_{ki,jh}\overset{\eqref{b_anti}}{=}b_{kh,ji}\overset{\eqref{b_sym}}{=}b_{ji,kh}\overset{\eqref{b_ji}}{=}-b_{ij,kh}, ~\forall  1 \leq h \leq l.
	\end{equation}
	Hence, by the assumption, we have
	$$v_k B_{ij}\overset{\eqref{bikjh=-bijkh}}{=}-v_j B_{ik}=\mp w_{\alpha} R_k\overset{\eqref{Define R_q}}{=}\mp v_k E_{\alpha-1},\ k \notin \{i, j\}.$$
	
	When $k \in \{i, j\}$, $b_{ik,jh}=b_{ij,kh}$ for all $1 \leq h \leq l$ since
	\begin{equation}\label{biijh=bijih}
		b_{ii,jh}\overset{\eqref{b_iikk}}{=}\delta_{jh}\overset{\eqref{b_ijij}}{=}b_{ij,ih}.
	\end{equation}
	By the assumption, we have
	$$v_k B_{ij}=v_j B_{ik}=\pm w_{\alpha} R_k=\pm v_k E_{\alpha-1},\ k \in \{i, j\}.$$
	
	Combining the above two cases, $B_{ij}=\mp I_{l}^{(i)}I_{l}^{(j)}E_{\alpha -1}$. 
	By \eqref{biijh=bijih} and 
	$$b_{ij,jh}\overset{\eqref{b_ji}}{=}-b_{ji,jh}\overset{\eqref{b_ijij}}{=}-\delta_{ih},\ \forall 1\leq h\leq l,$$
	the matrix $B_{ij}$ has the $i$-th row equal to $v_j$ and the $j$-th row equal to $-v_i$. Since $B_{ij}$ is skew-symmetric (see Lemma \ref{properties of B}), it follows that the $i$-th column is $-v_j^T$ and the $j$-th column is $v_i^T$.
	Therefore,
	\begin{align*}
		&B_{ij}=I_{l}^{(j)}B_{ij}I_{l}^{(i)}=\mp I_{l}^{(i)}E_{\alpha -1}I_{l}^{(i)}=\mp \tau_{i}(E_{\alpha-1}),\\
		&B_{ij}=I_{l}^{(i)}B_{ij}I_{l}^{(j)}=\mp I_{l}^{(j)}E_{\alpha -1}I_{l}^{(j)}=\mp \tau_{j}(E_{\alpha-1}).
	\end{align*}	
\end{proof}

Lemma~\ref{Btauthm} identifies the precise form of some off-diagonal blocks once their interaction with the matrices $R_k$ is prescribed. We next record a general consequence of positive semidefiniteness, showing that an orthogonal off-diagonal block forces a multiplicative relation among the other blocks of $B$. This observation will be used repeatedly in the sequel.

\begin{lem}\label{BBthm}
	For some fixed $1\leq i,j\leq l$, if $B=\(B_{ik}\)_{i,k=1}^{l}$ is a positive semidefinite matrix satisfying \eqref{b_ijij} and the block $B_{ij}$ is an orthogonal matrix, then 
	$$B_{ik} = B_{ij}B_{jk}$$ 
	for all $1 \leq k \leq l$.
\end{lem}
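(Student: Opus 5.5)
We argue with the $3\times 3$ block principal submatrix of $B$ indexed by $i,j,k$; a Gram-vector argument then forces the claimed relation. By \eqref{b_ijij} every diagonal block equals $I_l$, and $B$ is symmetric since it is positive semidefinite. We may assume $k\notin\{i,j\}$ and $i\neq j$. Indeed, if $k=j$ the claim reads $B_{ij}=B_{ij}I_l$. If $k=i$, it reads $I_l=B_{ij}B_{ji}=B_{ij}B_{ij}^T$, which holds because $B_{ij}$ is orthogonal and $B_{ji}=B_{ij}^T$. If $i=j$, then $B_{ij}=I_l$ and the claim is trivial.

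Since $B\succeq 0$, we factor $B=V^TV$ and write $V=(V_1,\dots,V_l)$ in column blocks, each $V_p$ having $l$ columns. Then $B_{pq}=V_p^TV_q$, and $V_p^TV_p=I_l$ says each $V_p$ has orthonormal columns. The key step is to show that
\[
V_i=V_jB_{ij}^T .
\]
To do this, compute
\[
(V_i-V_jB_{ij}^T)^T(V_i-V_jB_{ij}^T)=I_l-B_{ij}B_{ji}-B_{ij}B_{ij}^T\cdot 0\ldots,
\]
or more precisely, expand it as
\[
V_i^TV_i-V_i^TV_jB_{ij}^T-B_{ij}V_j^TV_i+B_{ij}V_j^TV_jB_{ij}^T
= I_l-B_{ij}B_{ij}^T-B_{ij}B_{ij}^T+B_{ij}B_{ij}^T
= I_l-B_{ij}B_{ij}^T .
\]
This vanishes by orthogonality of $B_{ij}$, so $V_i=V_jB_{ij}^T$. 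Multiplying on the right side of the transpose gives
\[
B_{ik}=V_i^TV_k=B_{ij}V_j^TV_k=B_{ij}B_{jk}
\]
for every $k$.

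If one prefers to avoid the factorization, the same argument can be phrased as a Schur complement. One considers the principal submatrix
\[
\begin{pmatrix} I_l & B_{ij} & B_{ik}\\ B_{ji} & I_l & B_{jk}\\ B_{ki} & B_{kj} & I_l\end{pmatrix}\succeq 0 .
\]
Conjugating by the block-elementary matrix that subtracts $B_{ij}$ times the second block row from the first block row, and the corresponding column operation, turns the $(1,1)$ block into $I_l-B_{ij}B_{ij}^T=0$ and the $(1,3)$ block into $B_{ik}-B_{ij}B_{jk}$. A positive semidefinite matrix with a zero diagonal block must have the corresponding off-diagonal blocks equal to zero, so $B_{ik}=B_{ij}B_{jk}$.

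The only delicate point is the identity $B_{ji}=B_{ij}^T$, which holds because $B$ is symmetric. Apart from that, the argument is a direct computation; I expect no real obstacle.
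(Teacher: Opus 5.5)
Your proof is correct, and in its Schur-complement form it is exactly the paper's argument. A block congruence on the principal submatrix indexed by $i,j,k$ produces the zero diagonal block $I_l-B_{ij}B_{ij}^T$ next to the off-diagonal block $B_{ik}-B_{ij}B_{jk}$, which positive semidefiniteness then forces to vanish. Your Gram-factorization version, which establishes $V_i=V_jB_{ij}^T$, is the same computation in factored form; it has the small bonus of handling every $k$ uniformly without a case split, though you should delete the garbled first display that precedes the phrase ``or more precisely''.
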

\begin{proof}
	Under the given conditions, we have  $B_{ji}B_{ij}=B^{T}_{ij}B_{ij}=I_{l}$ and $B_{hh}=I_{l}$ for all $1\leq h\leq l$.
	
	When $i=j$, the conclusion holds trivially. Now consider the case $i \neq j$. For $k = i$ or $k = j$, the relation $B_{ik} = B_{ij}B_{jk}$ follows directly. For any $k \notin \{i,j\}$, 
	consider the principal submatrix of $B$ corresponding to the $i$-, $j$-, and $k$-th block rows and columns
	(permute block indices so that the order is $\{i,j,k\}$). Applying a congruence transformation yields
	\begin{equation*}
		\begin{pmatrix}
			I_l &  B_{ij} & B_{ik} \\
			B_{ji} & I_l & B_{jk} \\
			B_{ki} & B_{kj} &I_l
		\end{pmatrix}
		\rightarrow 
		\begin{pmatrix}
			0 &  0 & B_{ik}-B_{ij}B_{jk} \\
			0 & I_l & B_{jk} \\
			B_{ki}-B_{kj}B_{ji} & B_{kj} &I_l
		\end{pmatrix}.
	\end{equation*}
	Since the principal submatrix of $B$ is positive semidefinite, it follows that $B_{ik} = B_{ij}B_{jk}$.
\end{proof}

We now complete the proof of Theorem~\ref{thm:SDP-feasible-B}. 
By Proposition~\ref{QBcor}, it suffices to show that the SDP constraints in Theorem~\ref{thm:SDP-feasible-B} imply all the relations \eqref{b_ijij}--\eqref{b_ji}. 
Among these, Lemma~\ref{properties of B} already yields \eqref{b_ijij}, \eqref{b_sym}, and \eqref{b_anti}: indeed, from the block formulation in Theorem~\ref{thm:SDP-feasible-B} we know that $B_{ii}=I_l$, that $B$ is symmetric, and that each off-diagonal block $B_{ij}$ is skew-symmetric. 
Therefore the only relations from Proposition~\ref{QBcor} that still need to be recovered are \eqref{b_iikk} and \eqref{b_ji}. We verify them below.

Recall that the first row of $R_i$ is the $i$-th row of $E_0=I_l$, namely $v_i$.
Taking the first row on both sides of $R_iB_{ik}=R_k$ yields
\[
v_i B_{ik}=v_k \qquad (1\le i,k\le l).
\]
Using $(B_{ik})_{jh}=b_{ij,kh}$, we obtain
\[
b_{ii,kh}=\delta_{kh},
\]
which is exactly \eqref{b_iikk}.

Fix $i\neq j$ and arbitrary $k,h$. We verify \eqref{b_ji}. 
If $(k,h)=(i,j)$ or $(k,h)=(j,i)$, then the conclusion follows directly from
\eqref{b_ijij}, \eqref{b_iikk}, and \eqref{b_anti}. 

Now assume $(k,h)\notin\{(i,j),(j,i)\}$. Since $B\succeq0$, the $3\times3$ principal
submatrix of $B$ indexed by the three distinct index pairs $ij$, $ji$, and $kh$ is
positive semidefinite. Using $B_{ii}=I_l$ and the skew-symmetry of $B_{ij}$, its
determinant reduces to
\[
-(b_{ij,kh}+b_{ji,kh})^{2}\ge0,
\]
and hence $b_{ij,kh}+b_{ji,kh}=0$. Therefore \eqref{b_ji} holds for all $k,h$.

This verifies the remaining relations required in Proposition~\ref{QBcor}, and hence completes the proof of Theorem~\ref{thm:SDP-feasible-B}.

\section{A Reduction to Representative Cases for Theorem~\ref{sos thm}}\label{Simplify}
In this section we use the representation theory of irreducible Clifford systems to derive a reduction
principle for \emph{sos} certification (see Proposition~\ref{reduction principle}).  This principle
substantially decreases the number of multiplicity pairs that need to be checked individually.

Recall from \cite{FKM81} that every Clifford system is algebraically equivalent to a direct sum of irreducible Clifford systems. Let $\delta(m)$ denote the minimal dimension of an irreducible real representation of the Clifford algebra $C_{m-1}$.
Then an irreducible Clifford system $\{P_0,\cdots,P_m\}$ on $\mathbb{R}^{2l}$ exists precisely for the following values of $m$ with $l=\delta(m)$:
	\begin{table}[htbp]
		\centering
		\begin{tabular}{|c|c|c|c|c|c|c|c|c|c|}
			\hline
			$m$ & $1$ & $2$ & $3$ & $4$ & $5$ & $6$ & $7$ & $8$ & $\cdots~ m+8$\\
			\hline
			$\delta(m)$& $1$ &$2$ &$4$ &$4$ & $8$ & $8$ & $8$ & $8$ & $\cdots~ 16\delta(m)$\\
			\hline
		\end{tabular}
	\vspace{1em}
	\caption{The minimal dimension $\delta(m)$ of an irreducible real representation of the Clifford algebra $C_{m-1}$}
	\label{representation of Cl}
	\vspace{-1em}
\end{table}

Consider the decomposition of $\{P_0,\cdots,P_m\}$ on $\mathbb{R}^{2l}$ with $l=k\delta(m)$ into a direct sum of $k\geq 1$ irreducible Clifford systems on $\mathbb{R}^{2\delta(m)}$ (denoted with a superscript $r=1,\cdots,k$) so that
\begin{equation}\label{Clifforddecom-irr}
	\begin{array}{cccc}
		\mathbb{R}^{2l}=&\mathbb{R}^{2\delta(m)} & \oplus  \cdots \oplus & \mathbb{R}^{2\delta(m)}  \\
		(P_0,\cdots,P_m)=&(P_0^1,\cdots,P_m^1) & \oplus \cdots  \oplus & (P_0^k,\cdots,P_m^k).
	\end{array}
\end{equation}
Here the irreducible Clifford systems $\{P_0^r,\cdots,P_m^r\}$ on $\mathbb{R}^{2\delta(m)}$ can be expressed in the form as (\ref{Cliffordsys-alg}) so that
\begin{equation}\label{Cliffordsys-alg-irr}
	P_0^r=\begin{pmatrix}
		I_{\delta(m)} &  0 \\
		0 & -I_{\delta(m)}
	\end{pmatrix}, \quad P_1^r=\begin{pmatrix}
		0 & I_{\delta(m)}     \\
		I_{\delta(m)}  & 0
	\end{pmatrix}, \quad P_{\alpha+1}^r=\begin{pmatrix}
		0 & E_\alpha^r    \\
		-E_\alpha^r & 0
	\end{pmatrix},  \\
	~~
\end{equation}
$\alpha=1,\cdots,m-1,$
where $\{E_1^r,\cdots,E_{m-1}^r\}$ generates an irreducible Clifford algebra on each $\mathbb{R}^{\delta(m)}$ of the decomposition of $\{E_1,\cdots,E_{m-1}\}$ on $\mathbb{R}^{l}=\mathbb{R}^{\delta(m)}  \oplus  \cdots \oplus  \mathbb{R}^{\delta(m)}$.
The multiplicities of an isoparametric hypersurface of OT-FKM type are
\begin{equation*}
	m_+ = m, \quad m_- = l-m-1=k\delta(m) - m - 1, \quad k \geq 1,
\end{equation*}
where $k$ is chosen sufficiently large so that $m_->0$. In the table below of possible multiplicities of the principal curvatures of an isoparametric hypersurface of OT-FKM type, the cases where $m_- \leq 0$ are denoted by a dash.

\begin{table}[htbp]
	\centering
	\begin{tabular}{|c|c|c|c|c|c|c|c|c|c|c|}
		\hline
		\diagbox[width=3.5em, height=2.7em, innerleftsep=10pt, innerrightsep=0pt]{$k$}{$\delta(m)$} & $1$ & $2$ & $4$ & $4$ & $8$ & $8$ & $8$ & $8$ & $16$ & $\cdots$ \\
		\hline
		$1$ & $-$ & $-$ & $-$ & $-$ & $(5, 2)$ & $(6, 1)$ & $-$ & $-$ & $(9, 6)$ & $\cdots$ \\[0.3em]
		\hline
		$2$ & $-$ & $(2, 1)$ & $(3, 4)$ & \underline{$(4, 3)$} & $(5, 10)$ & $(6, 9)$ & $(7, 8)$ & \underline{$(8, 7)$} & $(9, 22)$ & $\cdots$\\[0.3em]
		\hline
		$3$ & $(1, 1)$ & $(2, 3)$ & $(3, 8)$ & \underline{$(4, 7)$} & $(5, 18)$ & $(6, 17)$ & $(7, 16)$ & \underline{$(8, 15)$} & $(9, 38)$ & $\cdots$\\[0.3em]
		\hline
		$4$ & $(1, 2)$ & $(2, 5)$ & $(3, 12)$ & \underline{\underline{$(4, 11)$}} & $(5, 26)$ & $(6, 25)$ & $(7, 24)$ & \underline{\underline{$(8, 23)$}} & $(9, 54)$ & $\cdots$\\[0.3em]
		\hline
		$5$ & $(1, 3)$ & $(2, 7)$ & $(3, 16)$ & \underline{\underline{$(4, 15)$}} & $(5, 34)$ & $(6, 33)$ & $(7, 32)$ & \underline{\underline{$(8, 31)$}} & $(9, 70)$ & $\cdots$\\[0.3em]
		\hline
		$\vdots$ & $\vdots$ & $\vdots$ & $\vdots$ & $\vdots$ & $\vdots$ & $\vdots$ & $\vdots$ & $\vdots$ & $\vdots$ & $\ddots$\\
		\hline
	\end{tabular}
	\vspace{1em}
	\caption{Multiplicities of principal curvatures of OT-FKM type hypersurfaces}\label{table of m+,m-}
	\vspace{-1em}
\end{table}

Geometrically equivalent Clifford systems determine congruent families of isoparametric hypersurfaces. In Table \ref{table of m+,m-}, the underlined multiplicities,
\[
\underline{(m_+, m_-)}, \quad \underline{\underline{(m_+, m_-)}},
\]
denote the two, respectively, three geometrically inequivalent Clifford systems for the multiplicities $(m_+, m_-)$. Ferus, Karcher, and Münzner show that these geometrically inequivalent Clifford systems with $m \equiv 0 \pmod{4}$ and $l = k\delta(m)$ actually lead to incongruent families of isoparametric hypersurfaces, of which there are $\lfloor k/2\rfloor+1$.

\begin{lem}\label{geom. equi. with sos}
	The \emph{sos} property of $G_F$ is invariant under geometric equivalence of Clifford systems; that is, if $G_F$ is \emph{sos} for one Clifford system in an equivalence class, then it is \emph{sos} for all Clifford systems in that class.
\end{lem}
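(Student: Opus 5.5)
The plan is to show that geometrically equivalent Clifford systems give isoparametric polynomials that differ by an orthogonal change of variables. The \emph{sos} property is invariant under linear changes of variables, so the result follows. We work directly with the definition of $G_F$ in \eqref{nonnegativepolyG}, not with the SDP of Theorem~\ref{thm:SDP-feasible-B}, because that SDP depends on the normal form \eqref{Cliffordsys-alg}.

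\emph{Step 1 (algebraic equivalence).} Suppose $Q_\alpha=AP_\alpha A^T$ with $A\in O(\mathbb{R}^{2l})$. Then $\langle Q_\alpha x,x\rangle=\langle P_\alpha A^Tx,A^Tx\rangle$, and $|x|=|A^Tx|$. Hence $G_{F_Q}(x)=G_{F_P}(A^Tx)$.

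\emph{Step 2 (action of $O(\mathrm{Span}\{P_0,\dots,P_m\})$).} Let $C\in O(\mathrm{Span}\{P_\alpha\})$, where the span carries the inner product $\langle X,Y\rangle=\frac{1}{2l}\operatorname{tr}(XY)$, for which $\{P_\alpha\}$ is orthonormal by the Clifford relations. Write $C(P_\alpha)=\sum_\beta c_{\beta\alpha}P_\beta$ with $(c_{\beta\alpha})$ an orthogonal $(m+1)\times(m+1)$ matrix. Then
\[
\sum_\alpha\langle C(P_\alpha)x,x\rangle^2=\sum_\alpha\Bigl(\sum_\beta c_{\beta\alpha}\langle P_\beta x,x\rangle\Bigr)^2=\sum_\beta\langle P_\beta x,x\rangle^2 ,
\]
so the polynomial $G_F$ is literally unchanged. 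A small check is needed here: $\{C(P_\alpha)\}$ is again a symmetric Clifford system, since orthogonal linear combinations preserve the anticommutation relations. This justifies that $G_F$ for $\{C(P_\alpha)\}$ makes sense.

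\emph{Step 3.} Combining Steps 1 and 2, geometrically equivalent systems yield $G_{F_Q}(x)=G_{F_P}(Ux)$ for some orthogonal $U$. If $G_{F_P}=\sum_j q_j^2$, then $G_{F_Q}=\sum_j (q_j\circ U)^2$, and each $q_j\circ U$ is again a quadratic form. The converse direction is symmetric.

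There is essentially no obstacle here. The only point requiring care is the orthonormality of the $P_\alpha$ in the trace inner product, which follows from $\operatorname{tr}(P_\alpha P_\beta)=\frac12\operatorname{tr}(P_\alpha P_\beta+P_\beta P_\alpha)=2l\,\delta_{\alpha\beta}$. As a remark, linear invariance also preserves the span dimension of the summands, so the same argument shows that the \emph{sos} representation ranks are invariant as well.
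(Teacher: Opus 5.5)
Your proof is correct and follows essentially the same route as the paper. Both arguments show that $G_F'(x)=G_F(Wx)$ for an orthogonal $W$. They use that the coefficient matrix of the orthogonal map on $\mathrm{Span}\{P_0,\dots,P_m\}$ lies in $O(m+1)$, which leaves $\sum_\alpha\langle P_\alpha x,x\rangle^2$ unchanged, and that conjugation by an orthogonal matrix is an orthogonal change of variables. Your extra checks (orthonormality of the $P_\alpha$ in the trace inner product, and that orthogonal combinations preserve the Clifford relations) make explicit what the paper takes for granted. Your closing remark on ranks is exactly the paper's later Lemma~\ref{R(G_F) invariant}.
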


\begin{proof}
	Assume $\{P_{0},\cdots,P_{m}\}$ and $\{{P}_{0}' ,\cdots,{P}_{m}'\}$ are two geometrically equivalent Clifford systems on $\mathbb{R}^{2l}$, and denote
	$$G_F(x):=|x|^4-\sum_{\alpha=0}^{m}\langle P_\alpha x,x\rangle^2,\quad {G}_F'(x):=|x|^4-\sum_{\alpha=0}^{m}\langle {P}_\alpha' x,x\rangle^2.$$
	It suffices to prove that if $G_F$ is \emph{sos}, then ${G}_F'$ is \emph{sos}.
	
	Suppose that $G_F$ is \emph{sos}. Since $\{P_{0},\cdots,P_{m}\}$ and $\{{P}_{0}' ,\cdots,{P}_{m}'\}$ are geometrically equivalent, there exist an orthogonal transformation $U\in O(\mathrm{Span}\{P_0, \cdots,P_m\})$ and an orthogonal matrix $W\in O(\mathbb{R}^{2l})$   such that
	$$
	P_{\alpha}'= W^T U(P_{\alpha}) W,\quad \forall \alpha=0,1,\dots,m.
	$$
	Then there exists $\(u_{\alpha}^{\beta}\)_{\alpha,\beta=0}^m \in O(m+1)$ such that
	$$U(P_{\alpha})=\sum_{\beta=0}^{m} u_{\alpha}^{\beta} P_{\beta},\quad \forall \alpha=0,1,\dots,m.$$
	Thus we have
	\begin{align*}
		{G}_F'(x)&=|x|^4-\sum_{\alpha=0}^{m}\langle W^T U(P_{\alpha}) W x,x\rangle^2\\
		&=|Wx|^4-\sum_{\alpha=0}^{m}\langle  U(P_{\alpha}) W x,Wx\rangle^2\\
		&=|Wx|^4-\sum_{\alpha=0}^{m}\sum_{\beta,\gamma=0}^{m} u_{\alpha}^{\beta} u_{\alpha}^{\gamma}\langle  P_{\beta} W x,Wx\rangle \langle  P_{\gamma} W x,Wx\rangle\\
		&=|Wx|^4-\sum_{\beta,\gamma=0}^{m} \(\sum_{\alpha=0}^{m} u_{\alpha}^{\beta} u_{\alpha}^{\gamma}\)\langle  P_{\beta} W x,Wx\rangle \langle  P_{\gamma} W x,Wx\rangle\\
		&=|Wx|^4-\sum_{\beta,\gamma=0}^{m} \delta_{\beta\gamma}\langle  P_{\beta} W x,Wx\rangle \langle  P_{\gamma} W x,Wx\rangle\\
		&=|Wx|^4-\sum_{\beta=0}^{m} \langle  P_{\beta} W x,Wx\rangle^2={G}_F(Wx).
	\end{align*}
	This implies that ${G}_F'(x)$ is \emph{sos}.
\end{proof}

Note that henceforth, when we say $G_F$ is \emph{sos} for the pair $(m,l)$, we mean that for any Clifford system $\{P_0,\cdots,P_m\}$ on $\mathbb{R}^{2l}$, the polynomial $G_F(x) = |x|^4 - \sum_{\alpha=0}^m \langle P_\alpha x,x \rangle^2$ is \emph{sos}.

From the lemma above, we obtain the main proposition of this section:

\begin{prop}\label{reduction principle}
	If $G_F$ is \emph{sos} for $(m,l) = (m_0, l_0)$, then $G_F$ is \emph{sos} for all pairs $(m_1, l_0)$ with $1\leq m_1 \leq m_0$ and $m_1 \not\equiv 0 \pmod{4}$.
\end{prop}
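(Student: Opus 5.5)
The idea is to obtain the Clifford system for $(m_1,l_0)$ by truncating one for $(m_0,l_0)$, and then to show that $G_F$ cannot gain the \emph{sos} property under such a truncation. The direction matters: by \eqref{nonnegativepolyG}, $G_F$ \emph{decreases} when more squares $\langle P_\alpha x,x\rangle^2$ are subtracted. So if $G_{F_0}=|x|^4-\sum_{\alpha=0}^{m_0}\langle P_\alpha x,x\rangle^2$ is \emph{sos}, then
\[
G_{F_1}(x)=|x|^4-\sum_{\alpha=0}^{m_1}\langle P_\alpha x,x\rangle^2=G_{F_0}(x)+\sum_{\alpha=m_1+1}^{m_0}\langle P_\alpha x,x\rangle^2
\]
is again \emph{sos}, being a sum of \emph{sos} forms. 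Algebraically the argument is therefore trivial; all the content lies in matching Clifford systems.

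\textbf{Step 1 (truncation).} Fix a Clifford system $\{P_0,\dots,P_{m_0}\}$ on $\mathbb{R}^{2l_0}$. Then $\{P_0,\dots,P_{m_1}\}$ is again a symmetric Clifford system on $\mathbb{R}^{2l_0}$, and the displayed identity shows that its $G_F$ is \emph{sos}.

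\textbf{Step 2 (every class is reached).} The statement quantifies over \emph{all} Clifford systems with $(m_1,l_0)$, so we must show that each one is geometrically equivalent to a truncation as in Step 1. Then Lemma \ref{geom. equi. with sos} carries the \emph{sos} property to every system for $(m_1,l_0)$. Here the hypothesis $m_1\not\equiv 0\pmod 4$ enters, through the representation theory recalled from \cite{FKM81}.
\begin{itemize}
\item For $m_1\not\equiv0\pmod 4$, the Clifford algebra $C_{m_1-1}$ has a unique irreducible module up to equivalence. Hence any two Clifford systems $\{P_0,\dots,P_{m_1}\}$ on $\mathbb{R}^{2l_0}$ are algebraically equivalent, since each is a direct sum of $l_0/\delta(m_1)$ copies of the unique irreducible one.
\item The truncation from Step 1 is one such system, so every system for $(m_1,l_0)$ is algebraically, hence geometrically, equivalent to it.
\end{itemize}
We also need $l_0$ to be a multiple of $\delta(m_1)$. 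This holds because $\delta$ is nondecreasing in $m$ and takes values that are powers of $2$ (Table \ref{representation of Cl}), so $\delta(m_1)\mid\delta(m_0)\mid l_0$. In any case, restricting a $C_{m_0-1}$-module to $C_{m_1-1}$ already produces a system on $\mathbb{R}^{2l_0}$, so existence is automatic.

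\textbf{Main obstacle.} The only delicate point is the uniqueness claim in Step 2. If the paper's normalization of ``\emph{sos} for $(m,l)$'' quantifies over all systems, we must be sure that no inequivalent classes exist when $m_1\not\equiv0\pmod 4$. I would cite \cite{FKM81} directly: for these $m$ there is exactly one class. This is the reason the excluded case $m_1\equiv 0\pmod 4$ appears, since there several inequivalent classes (the underlined entries in Table \ref{table of m+,m-}) may occur, and a truncation need only land in one of them.
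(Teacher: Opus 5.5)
Your proposal is correct and follows the paper's proof essentially verbatim. You truncate a system for $(m_0,l_0)$ and observe that $G_F^1=G_F^0+\sum_{\alpha=m_1+1}^{m_0}\langle P_\alpha x,x\rangle^2$ is \emph{sos}; then, since for $m_1\not\equiv 0 \pmod 4$ there is only one geometric equivalence class of Clifford systems on $\mathbb{R}^{2l_0}$, Lemma~\ref{geom. equi. with sos} transfers the property to every such system. The paper simply cites \cite{Cecil15} for that uniqueness, whereas you sketch the representation-theoretic reason (uniqueness of the irreducible $C_{m_1-1}$-module), which is accurate.
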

\begin{proof}
	Assume $\{P_0,\cdots,P_{m_0}\}$ is a Clifford system on $\mathbb{R}^{2l_0}$. Then for any $m_1$ satisfying $1\leq m_1 \leq m_0$,  $\{P_0, \cdots, P_{m_1}\}$ is also a Clifford system on $\mathbb{R}^{2l_0}$. Denote  
	$$
	G_F^0(x) := |x|^4 - \sum_{\alpha=0}^{m_0} \langle P_\alpha x, x \rangle^2, \quad G_F^1(x) := |x|^4 - \sum_{\alpha=0}^{m_1} \langle P_\alpha x, x \rangle^2.
	$$ 
	Since $ G_F^0(x) $ is \emph{sos}, and observe that  
	$$
	G_F^1(x)=|x|^4 - \sum_{\alpha=0}^{m_0} \langle P_\alpha x, x \rangle^2 + \sum_{\alpha=m_1+1}^{m_0} \langle P_\alpha x, x \rangle^2 = G_F^0(x) + \sum_{\alpha=m_1+1}^{m_0} \langle P_\alpha x, x \rangle^2,
	$$
	it follows that $ G_F^1(x) $ is also \emph{sos}.  
	
	For $m_1 \not\equiv 0 \pmod{4}$, there exists exactly one geometric equivalence class of Clifford systems on $\mathbb{R}^{2l_0}$ (see \cite{Cecil15}). Then, by Lemma \ref{geom. equi. with sos}, the fact that $G_F^1(x)$ is \emph{sos} implies that $G_F$ is \emph{sos} for $(m,l)=(m_1, l_0)$.
\end{proof}

This proposition reduces the problem to proving the \emph{sos} and \emph{non-sos} property of $G_F$ for some multiplicity pairs $(m_+,m_-) = (m, l-m-1)$ listed in Theorem \ref{sos thm}.

\begin{cor}\label{reduced cases}
		To prove Theorem \ref{sos thm}, it suffices to verify the following:
		\begin{enumerate}
			\item $G_F$ is \emph{non-sos} for: 
			\begin{enumerate}
				\item  $(m_+,m_-) =(m, l-m-1)= (4,3)^D$ (of definite class),
				\item  $(m,l)=(3,4r)$ for all $r\geq 3$;
			\end{enumerate}
			\item $G_F$ is \emph{sos} for:
			\begin{enumerate}
				\item $(m,l) = (1,k+2)$ for all $k \in \mathbb{N}^+$,
				\item $(m,l) = (2,2k+2)$ for all $k \in \mathbb{N}^+$,
				\item $(m,l) = (6,8)$.
			\end{enumerate}
		\end{enumerate}
\end{cor}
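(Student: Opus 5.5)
The corollary is a bookkeeping statement: every multiplicity pair in Table~\ref{table of m+,m-} has to be settled either directly by one of the listed cases or from them via Proposition~\ref{reduction principle}. I will go through the sos list of Theorem~\ref{sos thm} and then its complement. For the complement I use the contrapositive of Proposition~\ref{reduction principle}: if $G_F$ is non-sos for $(m_1,l_0)$ with $m_1\not\equiv 0 \pmod 4$, then $G_F$ is non-sos for every $(m_0,l_0)$ with $m_0\ge m_1$. Indeed, if $G_F^0$ were sos for some Clifford system $\{P_0,\dots,P_{m_0}\}$, the identity $G_F^1=G_F^0+\sum_{\alpha>m_1}\langle P_\alpha x,x\rangle^2$ would make $G_F^1$ sos. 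Since $m_1\not\equiv 0\pmod 4$, this system lies in the unique geometric class, so Lemma~\ref{geom. equi. with sos} applies.

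\emph{The sos direction.} The pairs $(1,k)$ and $(2,2k-1)$ are exactly $(m,l)=(1,k+2)$ and $(2,2k+2)$. For $m=1,2$ there is a single geometric class, so items 2(a) and 2(b) cover them. The remaining pairs $(3,4),(4,3),(5,2),(6,1)$ all have $l=8$.
\begin{itemize}
\item Item 2(c) gives $(6,8)$ directly.
\item Proposition~\ref{reduction principle} then gives $m=3,5$ with $l=8$.
\item For $(4,3)^I$, I apply the identity above with $m_1=4$: for a system $\{P_0,\dots,P_6\}$ on $\mathbb{R}^{16}$, the form attached to $\{P_0,\dots,P_4\}$ is sos. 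I then show this truncated system is of indefinite class, i.e.\ $P_0P_1P_2P_3P_4\neq\pm I$. Since $P_5$ anticommutes with each of the five factors, it anticommutes with their product; but $\pm I$ commutes with $P_5$, and $P_5\neq 0$. Lemma~\ref{geom. equi. with sos} then transfers the sos property to the whole indefinite class.
\end{itemize}

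\emph{The non-sos direction.} Here I read off Table~\ref{representation of Cl}. Any pair with $m\ge3$ has $4\mid\delta(m)\mid l$. Also $l=4$ forces $m_-\le0$ for $m=3,4$, and $l=8$ forces $m_-\le 0$ for $m=7,8$. Pairs with $m\ge 9$ need $l\ge16$. So every pair with $m\ge 3$ falls into one of two groups.
\begin{itemize}
\item $l=8$ with $m\in\{3,4,5,6\}$. These are all on the sos list except $(4,3)^D$, which is item 1(a).
\item $l=4r$ with $r\ge3$. Item 1(b) gives non-sos for $(3,4r)$, and the contrapositive above extends this to every $m\ge3$ at the same $l$, including the classes with $m\equiv0\pmod4$.
\end{itemize}
All pairs with $m=1,2$ are already sos. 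Hence the listed verifications, together with Proposition~\ref{reduction principle}, determine every entry of Table~\ref{table of m+,m-} in agreement with Theorem~\ref{sos thm}.

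\emph{Main obstacle.} The delicate step is the indefinite class $(4,3)^I$. Proposition~\ref{reduction principle} excludes $m_1\equiv0\pmod4$, so one has to argue directly that the truncation of an $m=6$ system is indefinite rather than definite. I would settle it with the anticommutation argument for $P_0P_1P_2P_3P_4$ given above. The rest of the argument is exhaustion over Tables~\ref{representation of Cl} and \ref{table of m+,m-}.
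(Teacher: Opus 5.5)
Your proposal is correct and follows the paper's overall structure. You get $(3,4)$ and $(5,2)$ from item 2(c) via Proposition~\ref{reduction principle}, obtain $(4,3)^I$ by truncating an $m=6$ system on $\mathbb{R}^{16}$, and push item 1(b) upward in $m$ to cover every pair with $l\ge 12$.

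The one genuine difference is how you show that the truncated system $\{P_0,\dots,P_4\}$ lies in the indefinite class.

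\begin{itemize}
\item The paper argues by elimination. The truncated form is sos, while item 1(a) says the definite class is non-sos, so the truncation cannot be definite.
\item You prove it directly. $P_5$ anticommutes with each of the five factors, hence with the product $P_0P_1P_2P_3P_4$. So that product cannot be $\pm I$, since otherwise $P_5=-P_5$, contradicting $P_5^2=I$.
\end{itemize}

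Your route makes the sos conclusion for $(4,3)^I$ independent of the non-sos computation behind item 1(a), at the cost of one short algebraic step. The paper's route is shorter, since item 1(a) has to be proved anyway.

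A smaller point: you state the contrapositive of Proposition~\ref{reduction principle} for an individual Clifford system. This makes explicit that the non-sos conclusion at $l\ge 12$ holds for every geometric class when $m\equiv 0 \pmod 4$. The paper's phrasing ``sos for some such pair'' leaves this implicit, although its proof of Proposition~\ref{reduction principle} does work system by system.
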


\begin{proof}
	We claim that $G_F$ is \emph{non-sos} for all pairs $(m,l)$ with $m\geq 3$ and $l=k\delta(m)\geq 12$. Indeed, suppose for contradiction that $G_F$ were \emph{sos} for some such pair $(m_0,l_0)$. Then by Proposition \ref{reduction principle}, $G_F$ would also be \emph{sos} for $m=3$ and $l=l_0$ (where $l_0\geq 12$), contradicting condition~(1)(b).
	
	Assume that $\{P_0,\cdots,P_{6}\}$ is a Clifford system on $\mathbb{R}^{16}$. By condition~(2)(c), the polynomial $G_F$ associated with $\{P_0,\cdots,P_{6}\}$ is \emph{sos}. Therefore, by the proof of Proposition \ref{reduction principle}, the polynomial $G_F$ associated with $\{P_0,\cdots,P_{4}\}$ is also \emph{sos}. When $m=4$, there are two geometric equivalence classes of Clifford systems on $\mathbb{R}^{16}$, namely, the definite class and the indefinite class. The system $\{P_0,\cdots,P_{4}\}$ must belong to the indefinite class, because the polynomial $G_F$ in the definite class is \emph{non-sos} by condition~(1)(a). Consequently, together with Lemma \ref{geom. equi. with sos}, this shows that $G_F$ is \emph{sos} for $(m_+,m_-)=(4,3)^I$.
	
	Applying Proposition \ref{reduction principle} once more, condition~(2)(c) implies that $G_F$ is \emph{sos} for $(m,l)=(3,8)$ and $(5,8)$. At this stage, we have established the \emph{sos} or \emph{non-sos} property of $G_F$ for all multiplicity pairs listed in Table~\ref{table of m+,m-}, thus completing the proof of Theorem \ref{sos thm}.
\end{proof}

\section{The \emph{Non-sos} Cases in Theorem~\ref{sos thm}}\label{pr-nonsos}

In this section, we prove the \emph{non-sos} cases in Theorem~\ref{sos thm}. 
By Lemma~\ref{geom. equi. with sos}, it suffices to verify the \emph{sos} property of $G_F$ for a single representative Clifford system in each geometric equivalence class. 
Accordingly, by Corollary~\ref{reduced cases}, it remains to consider two types of multiplicities: the exceptional case $(m_+,m_-)=(4,3)^D$ and the family $(m,l)=(3,4r)$ with $r\geq 3$. 
For each case we choose a suitable representative Clifford system and show that the corresponding polynomial $G_F$ cannot be written as a sum of squares.

\subsection{The \emph{Non-sos} Case $(m_+,m_-)=(4,3)^D$}\label{non-sos (4,3)^D}
For $(m,l) = (m_+, m_++m_-+1)=(4,8)$, there are two geometric equivalence classes of Clifford systems on $\mathbb{R}^{2l}$, referred to as the indefinite class and the definite class (see \cite{Cecil15}). A Clifford system $\{P_0,\cdots,P_m\}$ is called definite if $P_0\cdots P_m=\pm I_{2l}$. In the case where $(m_+, m_-)=(4,3)^{D}$ with definite Clifford system $\{P_0,\cdots,P_m\}$, assuming the \emph{psd} form $G_F$ in \eqref{nonnegativepolyG} is \emph{sos}, we proceed with a proof by contradiction. 

Define a linear homomorphism $\iota: \CC \rightarrow M(2, \R)$ by
\begin{equation}\label{iota}
	\iota(1) := I_2, \quad \iota(\oi) := \begin{pmatrix} 0 & -1 \\ 1 & 0 \end{pmatrix}.
\end{equation}  
Further, for all $k \in \mathbb{N}^{+}$ and $E = (e_{ij})_{k\times k} \in M(k, \CC)$, define the linear homomorphism $\iota_k: M(k, \CC) \rightarrow M(2k, \R)$ by 
\begin{equation}\label{iota_k}
	\iota_k(E) := \(\iota(e_{ij})\)_{k\times k}.
\end{equation} 
Note that $\iota_1 = \iota$ and we call $\iota_k(E)$ the real matrix corresponding to $E$. 

A $2 \times 2$ complex matrix representation of Clifford algebra $C_{3}$ is given by
$$ -\oi\sigma_3, \quad \oi\sigma_2, \quad -\oi\sigma_1, $$
where 
\begin{equation}\label{Pauli}
	\sigma_1 = \begin{pmatrix} 
		0 & 1 \\ 1 & 0 \end{pmatrix},
	\quad \sigma_2 = \begin{pmatrix} 
		0 & -\oi \\ \oi & 0 \end{pmatrix},
	\quad \sigma_3 = \begin{pmatrix} 
		1 & 0 \\ 0 & -1 \end{pmatrix}
\end{equation}
are Pauli matrices.

Let $\widetilde{E}_1, \widetilde{E}_2, \widetilde{E}_3$ denote their corresponding real matrices, i.e., 
\begin{equation}\label{C3 on R4}
	\widetilde{E}_1 := -\iota_2(\oi\sigma_3), \quad \widetilde{E}_2 := \iota_2(\oi\sigma_2), \quad \widetilde{E}_3 := -\iota_2(\oi\sigma_1).
\end{equation}
We then construct an $8 \times 8$ real matrix representation of $C_{3}$ as follows:
$$E_1:=\widetilde{E}_1 \oplus \widetilde{E}_1, \quad E_2:= \widetilde{E}_2 \oplus \widetilde{E}_2, \quad E_3:=\widetilde{E}_3 \oplus \widetilde{E}_3. $$
Consider the Clifford system $\{P_0,\cdots,P_4\}$ on $\mathbb{R}^{16}$ obtained by substituting $E_1,E_2,E_3$ into  \eqref{Cliffordsys-alg}. A straightforward verification confirms that 
$$P_0 \cdots P_4 = (E_1 E_2 E_3)\ \oplus\ (E_1 E_2 E_3) = -I_{16},$$ thereby establishing that this is indeed the definite case. By the earlier assumption, the polynomial $G_F(x) = |x|^4 - \sum_{\alpha=0}^{4} \langle P_\alpha x, x \rangle^2$ is \emph{sos}.

Let $E_0=I_8$. For any $1\leq q\leq 8$ and $1\leq \alpha\leq 4$, the $\alpha$-th row of $R_q$ is the $q$-th row of $E_{\alpha-1}$ (see \eqref{Define R_q}).  One gets
$$ R_1 = \begin{pmatrix} I_4, O_4 \end{pmatrix}, \ R_2 = \begin{pmatrix} \tau_3(\widetilde{E}_1), O_4 \end{pmatrix}, \ R_3 = \begin{pmatrix} \tau_2(\widetilde{E}_2), O_4 \end{pmatrix}, \ R_4 = \begin{pmatrix} \tau_2(\widetilde{E}_3), O_4 \end{pmatrix}, $$
and $$ R_{i+4} = R_i \begin{pmatrix} O_4 & I_4 \\ -I_4 & O_4 \end{pmatrix}, \quad  i=1,2,3,4,$$ 
where $\tau_2$, $\tau_3$ are as defined in \eqref{tau_{k}} and $O_4$ denotes the $4\times 4$ zero matrix.

By Proposition \ref{QBcor}, there exists an $l^2\times l^2$ positive semidefinite matrix $B$ satisfying conditions \eqref{b_ijij}--\eqref{b_ji}, and $R_{i}B_{ij}=R_{j}$ for all $1 \leq i,j \leq l$. For all $1 \leq k \leq l$, we have
$ R_1 B_{1k} = R_k $, so that the first four rows of $B_{1k}$ equal $R_k$.
From the second row of $R_1 B_{1k} = R_k$, we have $v_2 B_{1k} = w_2 R_k$ for all $1 \leq k \leq l$, which implies that $B_{12} = -\tau_1(E_1)$ by Lemma \ref{Btauthm}. Consequently, by Lemma \ref{properties of B}, the matrix
$$B_{21} =-B_{12}= \tau_1(E_1)=\tau_1(\widetilde{E}_1) \oplus \widetilde{E}_1$$
is orthogonal.

Furthermore, considering the relations $R_1 B_{15} = R_5 \ \text{and} \ R_5 B_{15} =-R_5 B_{51} =-R_1,$
we conclude that 
$$
B_{15} = \begin{pmatrix} O_4 & I_4 \\ -I_4 & O_4 \end{pmatrix}.
$$
Then applying Lemma \ref{BBthm}, we obtain
$$B_{25} = B_{21}B_{15}=\begin{pmatrix}
	O_4 &  \tau_1(\widetilde{E}_1) \\
	-\widetilde{E}_1 & O_4 
\end{pmatrix},$$
which fails to be skew-symmetric. This yields a contradiction with Lemma \ref{properties of B}.

Therefore, $G_F$ is \emph{non-sos} in the case $(m_+, m_-)=(4,3)^{D}$.

\subsection{The \emph{Non-sos} Cases $(m,l)=(3,4r)$ with $r\ge 3$}\label{m=3 l=4r}
For the sake of contradiction, suppose the \emph{psd} form $G_F$ in \eqref{nonnegativepolyG} is \emph{sos} for $(m,l)=(3,4r)$. We still take the same matrices $\widetilde{E}_1, \widetilde{E}_2, \widetilde{E}_3$ as \eqref{C3 on R4}. Define the block-diagonal matrices 
\begin{equation}\label{C3 matries}
	E_0 := I_l, \quad E_1 :=\underbrace{\widetilde{E}_1 \oplus \cdots \oplus \widetilde{E}_1}_{r}, \quad E_2 :=\underbrace{\widetilde{E}_2 \oplus \cdots \oplus \widetilde{E}_2}_{r},
\end{equation}
then $\{E_1, E_2\}$ gives a real matrix representation of Clifford algebra $C_{2}$ on $\R^l$. Consider the Clifford system $\{P_0,\cdots,P_3\}$ on $\mathbb{R}^{2l}$ obtained by substituting $E_1,E_2$ into  \eqref{Cliffordsys-alg}. Then the polynomial $G_F(x) = |x|^4 - \sum_{\alpha=0}^{3} \langle P_\alpha x, x \rangle^2$ is \emph{sos}.

In the present case, for any $1\leq q\leq l$, $R_q$ is a $3 \times l$ matrix whose $\alpha$-th row (for $1 \leq \alpha \leq 3$) is given by the $q$-th row of $E_{\alpha-1}$, according to definition \eqref{Define R_q}.
Let 
\begin{equation}\label{Define Rj'}
	R_j' := \begin{pmatrix} 1 & 0 & 0 \\0 & 1 & 0 \\ 0 & 0 & 1 \\ 0 & 0 & 0 \end{pmatrix} R_j = \begin{pmatrix} R_j \\ O_{1\times l} \end{pmatrix}_{4 \times l},
\end{equation} 
where $O_{1\times l}$ denotes the $1\times l$ zero matrix.
For any $1 \leq i \leq 4$, let $D_i$ denote the matrix $I_4$ with the $i$-th row multiplied by $0$. For any $2 \leq s \leq r$, define the block matrix $J_s = (E_{1s}-E_{s1}) \otimes I_4$, where $\{E_{ij}: 1 \leq i,j \leq r\}$ denotes the set of $r \times r$ standard basis matrices, each having  $1$ in the $(i,j)$-entry and zeros elsewhere.
Then
$$ R_1' = \begin{pmatrix} D_4, O_4, \cdots, O_4 \end{pmatrix}, \quad R_2' = \begin{pmatrix} D_4 \tau_3(\widetilde{E}_1), O_4, \cdots, O_4 \end{pmatrix}, $$
$$ R_3' = \begin{pmatrix} D_4 \tau_2(\widetilde{E}_2), O_4, \cdots, O_4 \end{pmatrix}, \quad R_4' = \begin{pmatrix} D_4 \tau_2(\widetilde{E}_3), O_4, \cdots, O_4 \end{pmatrix}, $$
\begin{equation}\label{R_{4k+i}'}
	R_{4k+i}' = R_i' J_{k+1}, \quad 1\leq k\leq r-1, \quad 1\leq i\leq 4,
\end{equation}
where $\tau_2$, $\tau_3$ are as defined in \eqref{tau_{k}} and $O_4$ denotes the $4\times 4$ zero matrix.

By Proposition \ref{QBcor} and the defining equation \eqref{Define Rj'}, there exists $B$ satisfying \eqref{b_ijij}--\eqref{b_ji} such that $B$ is positive semidefinite and $R_{i}'B_{ij}=R_{j}'$ for all $1 \leq i,j \leq l$.

Recall that $B$ is partitioned into $l \times l$ blocks $(B_{ik})_{i,k=1}^{l}$, where each $B_{ik}$ is an $l \times l$ matrix. Let $(B_{ts}^{ik})_{1 \leq t,s \leq r}$ be the $r \times r$ block representation of $B_{ik}$, where $B_{ts}^{ik}$ is a $4 \times 4$ matrix. Since $B_{ik}$ is skew-symmetric, we have
$(B_{ts}^{ik})^T = -B_{st}^{ik}.$

First prove $B_{12}^{15} = I_{4}^{(4)}$, where $I_{4}^{(4)}$ is the same as in \eqref{tau_{k}}. Since $ R_1' B_{1k} = R_k'$ for all $1 \leq k \leq l$, we have
$$v_j B_{1k} = w_j R_k, ~j=1,2,3, ~\forall 1 \leq k \leq l, $$
where $v_j$ and $w_j$ are defined as in \eqref{v_j B_{ik}}.
By Lemma \ref{Btauthm},
\begin{equation}\label{B12 for (3,4r)}
	B_{12} = -\tau_1(E_1)=-(\underbrace{\tau_1(\widetilde{E}_1) \oplus \cdots \oplus \widetilde{E}_1}_{r})
\end{equation}
is orthogonal.

On one hand, the relations
$
R_1' B_{15} = R_5' \ \text{and} \ R_5' B_{15} = -R_5' B_{51}=-R_1'
$
lead to the conclusions that
\[
D_4 B_{12}^{15} = D_4, \ D_4 B_{21}^{15} = -D_4.
\]
Since $(B_{12}^{15})^T = -B_{21}^{15}$, we have
\begin{equation}\label{B1512}
	B^{15}_{12} =\odiag\{1,1,1,c\},
\end{equation}
where $c$ is to be determined. 

On the other hand, starting from the relation $R_2' B_{25} = R_5'$, we derive a sequence of implications. First, this implies $D_4 \tau_3(\widetilde{E}_1) B_{12}^{25} = D_4$. Multiplying both sides by $-\tau_3(\widetilde{E}_1)$ yields 
$$-\tau_3(\widetilde{E}_1) D_4 \tau_3(\widetilde{E}_1) B_{12}^{25} = -\tau_3(\widetilde{E}_1) D_4.$$ 
Moreover, since $\tau_3(\widetilde{E}_1) D_4 = D_3 \tau_3(\widetilde{E}_1)$ and since $\tau_3(\widetilde{E}_1)$ is a skew-symmetric orthogonal matrix, it follows that $$D_3 B_{12}^{25} = -D_3 \tau_3(\widetilde{E}_1).$$

And from the relation $R_5' B_{25} = -R_2'$, we directly obtain $$D_4 B_{21}^{25} = -D_4 \tau_3(\widetilde{E}_1).$$
Since $(B_{12}^{25})^T = -B_{21}^{25}$, we have
$$B_{12}^{25} =\begin{pmatrix} 0 & -1 & 0 & 0\\1 & 0 & 0 & 0\\ 0 & 0 & 0 & d \\ 0 & 0 & 1 & 0 \end{pmatrix},$$  where $d$ is to be determined.

Applying Lemma \ref{BBthm}, we derive the relation
$B_{15} = B_{12} B_{25}.$
By \eqref{B12 for (3,4r)}, we obtain
$$B_{21}^{15} = \sum_{s=1}^{r}B_{2s}^{12} B_{s1}^{25} =B_{22}^{12} B_{21}^{25} = \widetilde{E}_1 (B_{12}^{25})^T = \odiag\{-1,-1,-d,1\}. $$
It follows that
$$B_{12}^{15} = -(B_{21}^{15})^T = \odiag\{1,1,d,-1\},$$
which implies $d=1$ and $c=-1$ upon comparing with \eqref{B1512}. Consequently, $B_{12}^{15} = I_{4}^{(4)}$.

Observing \eqref{C3 matries} and \eqref{R_{4k+i}'}, $B_{ij}$ and $B_{kh}$ should have similar properties when $i \equiv k \pmod{4}$ and $j \equiv h \pmod{4}$. In fact, similarly to the above, we can compute that $B_{13}^{19} = I_{4}^{(4)}$, $B_{23}^{59} = I_{4}^{(4)}$.

Since $B$ is positive semidefinite, its principal submatrix
$$ S:=\begin{pmatrix} I_l & B_{15} & B_{19} \\ B_{51} & I_l & B_{59} \\ B_{91} & B_{95} & I_l \end{pmatrix} $$
must also be positive semidefinite.
Based on the preceding calculations, the matrix
$$ K:=\begin{pmatrix} I_4 & B_{12}^{15} & B_{13}^{19} \\ B_{21}^{51} & I_4 & B_{23}^{59} \\ B_{31}^{91} & B_{32}^{95} & I_4 \end{pmatrix} = \begin{pmatrix} I_4 & I_{4}^{(4)} & I_{4}^{(4)} \\ I_{4}^{(4)} & I_4 & I_{4}^{(4)} \\ I_{4}^{(4)} & I_{4}^{(4)} & I_4 \end{pmatrix}, $$
which is a principal submatrix of $S$.
$K$ must be positive semidefinite. However, we obtain a contradiction since
$$ \begin{pmatrix} I_4 & I_4 & I_4 \end{pmatrix} \begin{pmatrix} I_4 & I_{4}^{(4)} & I_{4}^{(4)} \\ I_{4}^{(4)} & I_4 & I_{4}^{(4)} \\ I_{4}^{(4)} & I_{4}^{(4)} & I_4 \end{pmatrix} \begin{pmatrix} I_4 \\ I_4 \\ I_4 \end{pmatrix} = 3I_4 + 6I_{4}^{(4)}=\odiag\{9,9,9,-3\} $$
contains negative values along its diagonal. 

Therefore, the polynomial $G_F(x) = |x|^4 - \sum_{\alpha=0}^{3} \langle P_\alpha x, x \rangle^2$ is \emph{non-sos}. For $m=3$, there exists exactly one geometric equivalence class of Clifford systems on $\mathbb{R}^{8r}$. By Lemma \ref{geom. equi. with sos}, $G_F$ is \emph{non-sos} for $(m,l)=(3,4r)$ with $r\geq 3$.

\begin{rem}
	In short, the reason why $G_F$ is \emph{non-sos} for $(m,l)=(3,4r)$ is that the matrix $B$ satisfying the conditions in Proposition \ref{QBcor} must have an indefinite principal submatrix 
	$$\begin{pmatrix} 1 & -1 & -1 \\ -1 & 1 & -1 \\ -1 & -1 & 1 \end{pmatrix},$$ and this only holds when $r\geq 3$.
\end{rem}

\section{The \emph{sos} Cases in Theorem~\ref{sos thm}}\label{pr-sos}

In this section, we establish the \emph{sos} cases in Theorem~\ref{sos thm}. 
By the SDP characterization obtained earlier, it suffices to construct, for each admissible multiplicity pair, a feasible matrix $B$ satisfying the constraints in Theorem~\ref{thm:SDP-feasible-B}. 
In other words, we construct explicit matrices $B$ for the three cases listed in Corollary~\ref{reduced cases}, thereby proving that $G_F$ is \emph{sos} in these situations.

Technically, we first derive a set of necessary conditions that any matrix $B$ satisfying the constraints of Theorem~\ref{thm:SDP-feasible-B} must fulfill. 
Guided by these conditions, we then construct specific candidate matrices $B$ and verify that they indeed satisfy all the required constraints. 
The three multiplicity cases are treated separately in the following subsections.

\subsection{Constructing Feasible Matrices for $(m,l)=(1,k+2)$}\label{sos (1,k+2)}
The case $m=1$ is degenerate. Let $E_0 := I_l$, and let the Clifford system $\{P_0,P_1\}$ on $\mathbb{R}^{2l}$ be defined as in \eqref{Cliffordsys-alg}. 

In the present case, for any $1\leq q\leq l$, $R_q$ is a row vector which, according to definition \eqref{Define R_q}, is the $q$-th row of $E_0$. Hence, $R_q = v_q E_0 = v_q$. To facilitate referencing in later parts of the paper, we introduce the notation $R_q(1,l)$ for $R_q$ in the present context, i.e., 
\begin{equation}\label{Define R_q(1,l)}
	R_q(1,l):=R_q=v_q,\quad R(1,l):=R=(v_1,\cdots,v_l),\quad 1\leq q\leq l.
\end{equation}

Suppose $G_F(x) = |x|^4 - \sum_{\alpha=0}^{1} \langle P_\alpha x, x \rangle^2$ is \emph{sos}. By Proposition~\ref{QBcor} there exists a positive semidefinite matrix $B$ fulfilling \eqref{b_ijij}--\eqref{b_ji} such that $R_i B_{ij}=R_j$ for all $i,j$. For $i \ne j$, Lemma~\ref{properties of B} tells us that $B_{ij}$ is skew‑symmetric. The relation $R_i B_{ij} = R_j$, i.e., $v_i B_{ij} = v_j$, therefore forces the $(i,j)$--entry of $B_{ij}$ to be $1$.

We now construct the simplest possible matrix $B$ satisfying these conditions. 
Let 
\begin{equation}\label{B(1,l)}
	B(1,l) := (B_{ij})_{i,j=1}^l
\end{equation} 
be the block matrix defined by
\[
B_{ii} = I_l \quad \text{and} \quad B_{ij} = E_{ij} - E_{ji} \qquad (1 \le i \ne j \le l),
\]
where $E_{ij}$ denotes the $l \times l$ matrix unit. 
\begin{prop}\label{prop:B(1,l)}
	 
	The matrix $B(1,l)$ satisfies all the conditions of Proposition~\ref{QBcor}. Moreover, 
	\[
	\mathrm{rank}\bigl(B(1,l)\bigr) = \frac{l(l-1)}{2} + 1.
	\]
\end{prop}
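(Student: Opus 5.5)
Since $B=B(1,l)$ is given explicitly, I will simply check each condition of Proposition~\ref{QBcor}. The cleanest route is to read off the entries of $B$ via $(B_{ik})_{jh}=b_{ij,kh}$. For $i\neq k$ we have $(E_{ik}-E_{ki})_{jh}=\delta_{ij}\delta_{kh}-\delta_{kj}\delta_{ih}$, so
\[
b_{ij,kh}=\delta_{ik}\delta_{jh}+(1-\delta_{ik})\bigl(\delta_{ij}\delta_{kh}-\delta_{kj}\delta_{ih}\bigr).
\]
For $i\ne k$ this equals $b_{ij,kh}=\delta_{ij}\delta_{kh}-\delta_{kj}\delta_{ih}$. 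For $i=k$ the same formula reads $\delta_{jh}$, and it agrees with $\delta_{ij}\delta_{kh}-\delta_{kj}\delta_{ih}$ only when $j=h=i$. The conditions \eqref{b_ijij}--\eqref{b_ji} then become finite case checks.

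\emph{Conditions \eqref{b_ijij}--\eqref{b_ji}.} Condition \eqref{b_ijij} holds because $B_{ii}=I_l$. Condition \eqref{b_iikk} holds because $v_iB_{ik}=v_k$ for $i\ne k$, since the $i$-th row of $E_{ik}-E_{ki}$ is $v_k$. Symmetry \eqref{b_sym} follows from $B_{ki}=-B_{ik}=B_{ik}^T$. Condition \eqref{b_anti} is the skew-symmetry of $B_{ik}$. For \eqref{b_ji}, I will check $b_{ij,kh}=-b_{ji,kh}$ with $i\ne j$ by splitting into the cases $k\notin\{i,j\}$, $k=i$ and $k=j$. When $k\notin\{i,j\}$ both sides vanish. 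When $k=i$, the left side $\delta_{jh}$ matches $-b_{ji,ih}=-(-\delta_{jh})$.

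\emph{Condition $R_iB_{ij}=R_j$.} Here $R_q=v_q$, and the check is immediate: for $i=j$ it is trivial, and for $i\ne j$ it is the row computation above.

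\emph{Positive semidefiniteness and rank.} This is the main step. I will compute the quadratic form. For $y=(y_{ij})\in\mathbb R^{l^2}$,
\[
y^TBy=\sum_{i,j}y_{ij}^2+\sum_{i\ne k}\bigl(y_{ii}y_{kk}-y_{ik}y_{ki}\bigr)
=\Bigl(\sum_i y_{ii}\Bigr)^2+\sum_{i\ne j}y_{ij}^2-\sum_{i\neq k}y_{ik}y_{ki}.
\]
The last two terms combine to $\tfrac12\sum_{i\ne j}(y_{ij}-y_{ji})^2=\sum_{i<j}(y_{ij}-y_{ji})^2$. Hence
\[
y^TBy=\Bigl(\sum_i y_{ii}\Bigr)^2+\sum_{i<j}(y_{ij}-y_{ji})^2\ \ge 0 .
\]
Thus $B=V^TV$, where $V$ has the $1+\binom{l}{2}$ rows corresponding to the linear forms $\sum_i y_{ii}$ and $y_{ij}-y_{ji}$ ($i<j$). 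These forms involve disjoint sets of coordinates and are each nonzero, so they are linearly independent. Therefore $\operatorname{rank}B=\operatorname{rank}V=\tfrac{l(l-1)}2+1$.

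The only place I expect care to be needed is the bookkeeping of the double sum into the diagonal and off-diagonal parts of $y$. Everything else is direct verification.
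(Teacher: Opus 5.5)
Your proof is correct and is essentially the paper's argument: the identity $y^TBy=(\sum_i y_{ii})^2+\sum_{i<j}(y_{ij}-y_{ji})^2$ is exactly the paper's splitting $B(1,l)=\widetilde B+\sum_{i<j}\widehat B_{ij}$ into rank-one positive semidefinite pieces with disjoint supports. Your entrywise checks of \eqref{b_ijij}--\eqref{b_ji} and $R_iB_{ij}=R_j$ simply spell out what the paper calls a direct computation. One harmless slip is in your aside: for $i=k$ the expression $\delta_{ij}\delta_{kh}-\delta_{kj}\delta_{ih}$ vanishes identically, so it agrees with $\delta_{jh}$ precisely when $j\neq h$ and fails for $j=h=i$; you never use that remark.
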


\begin{proof}
	It is straightforward to verify by direct computation that $B(1,l)$ satisfies
	conditions~\eqref{Bentry} and~\eqref{BRrelation} in Proposition~\ref{QBcor}.  
	It remains to establish condition~\eqref{Bpsd} and to compute the rank of $B(1,l)$.
	
	Observe that
	\[
	B(1,l)
	= \sum_{i,j} E_{ij} \otimes E_{ij}
	+ \sum_{i} \sum_{j>i}
	\bigl(
	E_{ii} \otimes E_{jj}
	- E_{ij} \otimes E_{ji}
	- E_{ji} \otimes E_{ij}
	+ E_{jj} \otimes E_{ii}
	\bigr).
	\]
	We write
	\[
	B(1,l) = \widetilde{B} + \sum_{i} \sum_{j>i} \widehat{B}_{ij}.
	\]
	
	The matrix $\widetilde{B}$ has exactly $l^2$ nonzero entries, forming an
	$l \times l$ all-ones submatrix, and is therefore positive semidefinite with
	rank one.  
	On the other hand, for each $i<j$, the matrix $\widehat{B}_{ij}$ contains only
	four nonzero entries, forming a $2 \times 2$ principal submatrix
	\[
	\begin{pmatrix}
		1 & -1 \\
		-1 & 1
	\end{pmatrix},
	\]
	which is positive semidefinite and of rank one.
	
	Since the supports of $\widetilde{B}$ and the matrices $\widehat{B}_{ij}$
	are mutually orthogonal, it follows that $B(1,l)$ is positive semidefinite and
	\[
	\mathrm{rank}\bigl(B(1,l)\bigr)
	= 1 + \binom{l}{2}
	= \frac{l(l-1)}{2} + 1.
	\]
	This completes the proof.
\end{proof}

In summary, the matrix $B(1,l)$ constructed above fulfills all three conditions of Proposition~\ref{QBcor}. Therefore, we conclude that $G_F$ is \emph{sos} for $(m,l) = (1, k+2)$ ($\forall \ k \in \mathbb{N}^+$).

\subsection{Constructing Feasible Matrices for $(m,l)=(2,2k+2)$}\label{sos (2,2k+2)}
Recall that $\iota (\oi) = \begin{pmatrix} 0 & -1 \\ 1 & 0 \end{pmatrix}$ by \eqref{iota}.
Let $E_0:=I_{l}$. Clifford algebra $C_{1}$ has a complex matrix representation on $\mathbb{C}^{k+1}$ given by $\oi I_{k+1}$. Let $E_1$ denote the corresponding real matrix of $-\oi I_{k+1}$, i.e., 
 $$E_1:=-\iota_{k+1}(\oi I_{k+1})=-I_{k+1}\otimes \iota (\oi),$$
 where $\iota_{k+1}$ is defined as in \eqref{iota_k} (here a negative sign is added for computational convenience). Consider the Clifford system $\{P_0,P_1,P_2\}$ on $\mathbb{R}^{2l}$ obtained by substituting $E_1$ into  \eqref{Cliffordsys-alg}.

Unless otherwise stated, we adopt the following index ranges in this subsection:
$$1\leq i,j\leq l,\quad 1\leq s,h\leq k+1,\quad t=0,1. $$
Let $\{E_{sh}\}$ denote the $(k+1) \times (k+1)$ standard matrix basis of $M(k+1,\mathbb{R})$. Let \[
L_1:=I_l,\qquad L_s:=(E_{1s}-E_{s1})\otimes I_2 \ \ (2\le s\le k+1).
\]

In the present case, for any $1\leq q\leq l$, $R_q$ is a $2 \times l$ matrix whose $\alpha$-th row (for $\alpha =1, 2$) is given by the $q$-th row of $E_{\alpha-1}$, according to definition \eqref{Define R_q}. 
Therefore,
\[
R_1=\begin{pmatrix} I_2 & O_2 & \cdots & O_2 \end{pmatrix},\quad
R_2=-\begin{pmatrix} \iota(\oi) & O_2 & \cdots & O_2 \end{pmatrix},\quad
R_{2s-t}=R_{2-t}L_s,\ \ \forall\, s,t,
\]
where $O_2$ denotes the $2\times2$ zero matrix.
To facilitate referencing in later parts of the paper, we introduce the notation $R_q(2,l)$ for $R_q$ in the present context, i.e., 
\begin{equation}\label{Define R_q(2,l)}
	R_q(2,l):=R_q,\quad R(2,l):=R=(R_1,\cdots,R_l),\quad  1\leq q\leq l, 
\end{equation}
where $R_q$ is as above.

Suppose $G_F(x) = |x|^4 - \sum_{\alpha=0}^{2} \langle P_\alpha x, x \rangle^2$ is \emph{sos}. By Proposition~\ref{QBcor} there exists a positive semidefinite matrix $B$ fulfilling \eqref{b_ijij}--\eqref{b_ji} such that $R_i B_{ij}=R_j$ for all $i,j$. The condition $R_i B_{ij} = R_j$ for all $i,j$ is equivalent to: 
\begin{align}
	&R_{2s-1} B_{2s-1,j} = R_j, \label{R2s-1} \\
	&R_{2s} B_{2s,j} = R_j, \label{R2s}
\end{align}
for any $s,j$.
Furthermore, using the relation $\iota(\oi)R_{2s} = R_{2s-1}$ and left-multiplying \eqref{R2s} by $\iota(\oi)$, we obtain the equivalent form 
\begin{equation}\label{equi form of R2s}
	R_{2s-1} B_{2s,j} = \iota(\oi)R_j,
\end{equation}
for any $s,j$. Hence, $R_i B_{ij} = R_j$ for all $i,j$ is equivalent to \eqref{R2s-1} and \eqref{equi form of R2s}. This means that the matrix formed by the $(2s-1)$-th and $2s$-th rows of $B_{2s-1,j}$ is exactly $R_j$, and the matrix formed by the $(2s-1)$-th and $2s$-th rows of $B_{2s,j}$ is
$$
\iota(\oi)R_j=
\begin{cases}
	-R_{2h},\ j=2h-1,\\
	R_{2h-1},\ j=2h.
\end{cases}
$$
On the other hand, taking the second line of \eqref{R2s-1} and applying Lemma \ref{Btauthm}, it is easy to deduce that $B_{2s-1,2s}=-\tau_{2s}(E_1)$.

Combining the above conditions, we choose a symmetric matrix 
\begin{equation}\label{B(2,l)}
	B(2,l) := (B_{ij})_{i,j=1}^l,
\end{equation}
such that $B_{ii}=I_l$, $B_{2s-1,2s}=-\tau_{2s}(E_1)$, and 
$$
\begin{pmatrix}
	B_{2s-1,2h-1} & B_{2s-1,2h} \\
	B_{2s,2h-1} & B_{2s,2h}
\end{pmatrix}
=
\begin{pmatrix}
	(E_{sh}-E_{hs})\otimes I_2 & -(E_{sh}+E_{hs})\otimes \iota(\oi) \\
	(E_{sh}+E_{hs})\otimes \iota(\oi) & (E_{sh}-E_{hs})\otimes I_2
\end{pmatrix}.
$$
for all $i,s,h$ with $s\neq h$.

\begin{prop}\label{prop:B(2,l)}
	The matrix $B(2,l)$ satisfies all the conditions of Proposition~\ref{QBcor}. 
	Moreover,
	\[
	\mathrm{rank}\bigl(B(2,l)\bigr)
	= \frac{l(l-2)}{4} + 2.
	\]
\end{prop}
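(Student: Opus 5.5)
The plan mirrors the proof of Proposition~\ref{prop:B(1,l)}: verify the affine constraints by direct block computation, then split $B(2,l)$ into positive semidefinite pieces with mutually orthogonal supports and add up their ranks. Throughout, write $p:=k+1$, so $l=2p$. Each index $i\in\{1,\dots,l\}$ is encoded as $i=2s-t$ with $1\le s\le p$ and $t\in\{0,1\}$. The target rank then reads $\frac{l(l-2)}{4}+2=p(p-1)+2=2\binom{p}{2}+2$. This suggests one rank-$2$ piece, plus one rank-$2$ piece for each unordered pair $\{s,h\}$ with $s\neq h$.

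\emph{Step 1 (affine constraints).} Conditions \eqref{b_ijij}, \eqref{b_sym} and \eqref{b_anti} follow from the block description. We have $B_{ii}=I_l$, and $B$ is symmetric by construction. The blocks $(E_{sh}-E_{hs})\otimes I_2$ and $\pm(E_{sh}+E_{hs})\otimes\iota(\oi)$ are skew-symmetric, since $\iota(\oi)$ is. The block $-\tau_{2s}(E_1)$ is skew-symmetric because $E_1$ is. For $R_iB_{ij}=R_j$ I would use the reformulation \eqref{R2s-1}, \eqref{equi form of R2s}. Rows $2s-1,2s$ of $(E_{sh}-E_{hs})\otimes I_2$ form exactly $R_{2h-1}=R_1L_h$. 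Rows $2s-1,2s$ of $\pm(E_{sh}+E_{hs})\otimes\iota(\oi)$ form $\mp R_{2h}$, respectively $R_{2h-1}$, as required. For the within-pair blocks, the case $s=h$ follows from Lemma~\ref{Btauthm}. The remaining relations \eqref{b_iikk} and \eqref{b_ji} are then automatic, by the argument given at the end of the proof of Theorem~\ref{thm:SDP-feasible-B} once positive semidefiniteness is known. Alternatively, they can be read off directly.

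\emph{Step 2 (decomposition).} Group the index pairs $ij$, with $i,j\le l$, according to the pair of block indices $(s(i),s(j))$. I expect the entries of $B(2,l)$ to be nonzero only between $ij$ and $kh$ with $\{s(i),s(j)\}=\{s(k),s(h)\}$ as multisets, mirroring the $m=1$ case, where $\widetilde B$ lives on the diagonal indices $ii$ and $\widehat B_{ij}$ on $\{ij,ji\}$. This gives the following pieces.
\begin{enumerate}
\item[(a)] A piece $B^{(0)}$ supported on the $4p$ indices $ij$ with $s(i)=s(j)$. Here I would show that $B^{(0)}=\bigl(R^TR\bigr)$ restricted to these coordinates, i.e.\ a Gram matrix of the two rows of $R$, so that it has rank $2=m$. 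This uses that the within-pair entries come from $I_2$ and $\iota(\oi)$ and are the columns of the $R_q$.
\item[(b)] For each $s<h$, a piece $B^{(sh)}$ supported on the $8$ indices $ij$ with $\{s(i),s(j)\}=\{s,h\}$. Its $8\times 8$ matrix is built from $\pm I_2$ and $\pm\iota(\oi)$. I would show that it equals $VV^T$ for an explicit $8\times 2$ matrix $V$, namely two orthonormal-type columns of the form $(e,\;-\iota(\oi)e,\dots)$. Equivalently, it equals $2P$ for a rank-$2$ projection. This gives rank $2$.
\end{enumerate}

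\emph{Step 3 (conclusion).} The supports in (a) and (b) are pairwise disjoint. Hence $B(2,l)=B^{(0)}+\sum_{s<h}B^{(sh)}$ is positive semidefinite, with rank $2+2\binom{p}{2}$.

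The main obstacle is bookkeeping in Step 2. One must confirm that no nonzero entry of $B(2,l)$ couples different supports. In particular, the blocks $B_{2s-1,2s}=-\tau_{2s}(E_1)$ must contribute only inside the pieces listed above. The entries with $s(i)=s(j)$ interacting across different $s$ must all be assembled into the single rank-$2$ piece $B^{(0)}$, rather than into $p$ separate pieces. I would settle this by writing each $B^{(sh)}$ and $B^{(0)}$ as a sum of Kronecker products, as in the proof of Proposition~\ref{prop:B(1,l)}, and checking that each $8\times8$ block squares to twice itself, which makes it twice a projection of trace $2$.
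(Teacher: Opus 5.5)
Your proposal is correct, but it takes a different route from the paper. The paper checks the affine constraints and then uses a block congruence: it adds $B_{2s-1,2s}$ times the $(2s-1)$-st block row to the $2s$-th block row. This kills all even block rows and columns and leaves $B(1,k+1)\otimes I_2$, from which positive semidefiniteness and the rank are read off via Proposition~\ref{prop:B(1,l)}.

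You instead split $B(2,l)$ directly into disjointly supported pieces, exactly as in the $m=1$ proof, and this works.

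\textbf{The piece $B^{(0)}$.} The relations $R_{2s-1}B_{2s-1,j}=R_j$ and $R_{2s}B_{2s,j}=R_j$ say the following about the rows of $B(2,l)$ indexed by $ij$ with $s(i)=s(j)$:
\[
\rho_{2s-1,2s-1}=\rho_{2s,2s}=w_1R,\qquad \rho_{2s-1,2s}=-\rho_{2s,2s-1}=w_2R.
\]
These are exactly the corresponding rows of $R^TR$, and $R^TR$ is supported on those indices. So $B^{(0)}=R^TR$, with $(R^TR)^2=lR^TR$. This also settles, for free, that $B^{(0)}$ does not couple to the other pieces.

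\textbf{The pieces $B^{(sh)}$.} Order the eight indices as $(2s-1,2h-1),(2s-1,2h),(2s,2h-1),(2s,2h)$, followed by the same four with $s$ and $h$ swapped. One then gets
\[
B^{(sh)}=\begin{pmatrix}1&-1\\-1&1\end{pmatrix}\otimes\begin{pmatrix}I_2&\iota(\oi)\\-\iota(\oi)&I_2\end{pmatrix}=VV^T,\qquad V=\begin{pmatrix}W\\-W\end{pmatrix},\quad W=\begin{pmatrix}I_2\\-\iota(\oi)\end{pmatrix}.
\]

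\textbf{Three small corrections.}
\begin{enumerate}
\item Here $V^TV=4I_2$, so $(B^{(sh)})^2=4B^{(sh)}$. Thus $B^{(sh)}$ is four times a rank-$2$ projection, not twice one; its trace is $8$, since every diagonal entry of $B$ is $1$.
\item Your multiset criterion is false as literally stated. For example, $(E_{sh}-E_{hs})\otimes I_2$ couples $(2s-1,2s-1)$ with $(2h-1,2h-1)$. Your pieces (a)--(b) already absorb this, so only the sentence needs fixing.
\item Lemma~\ref{Btauthm} gives necessity, not sufficiency. The within-pair relation $R_{2s-1}B_{2s-1,2s}=R_{2s}$ should simply be checked directly; it is one line.
\end{enumerate}

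\textbf{Trade-offs.} The paper's route is shorter and reuses the $m=1$ result. However, its key identity $B_{2s,j}=-B_{2s-1,2s}B_{2s-1,j}$ is quoted from Lemma~\ref{BBthm}, which presupposes $B\succeq0$, the very property being proved. So in a sufficiency proof that identity must be verified by hand; it does hold.

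Your route avoids this issue and yields more. The identity $B(2,l)-R^TR=\sum_{s<h}B^{(sh)}$ exhibits the Gram matrix of the associated sos representation as an orthogonal sum of $\binom{k+1}{2}$ rank-$2$ blocks. Hence the rank $l(l-2)/4$ of that representation is visible directly, without Lemma~\ref{rank(B)-m}. The cost is somewhat heavier index bookkeeping.
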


\begin{proof}
	We first verify conditions (\ref{Bentry}) and (\ref{BRrelation}).
	A direct computation shows that both \eqref{R2s-1} and \eqref{R2s} hold,
	and hence condition (\ref{BRrelation}) is satisfied.
	All parts of condition (\ref{Bentry}) follow from routine calculations,
	except for \eqref{b_ji}.
	To establish \eqref{b_ji}, it suffices to verify its equivalent form
	\[
	v_j B_{ik} = - v_i B_{jk}, \qquad j \ne i,
	\]
	which can be checked by a case-by-case discussion of the indices $i,j,k$.
	
	We now turn to condition (\ref{Bpsd}). By Lemma~\ref{BBthm} and the skew-symmetry relation
	$B_{2s,2s-1}=-B_{2s-1,2s}$, for every $1\le j\le l$ one has
	\[
	B_{2s,j}
	= B_{2s,2s-1} B_{2s-1,j}
	= -\, B_{2s-1,2s} B_{2s-1,j}.
	\]
	We perform a congruence transformation on $B(2,l)$ at the level of block rows and block columns.
	For each $s$, we left-multiply the $(2s-1)$-st block row of $B(2,l)$ by the block
	$B_{2s-1,2s}$ and add it to the $2s$-th block row, and simultaneously
	right-multiply the $(2s-1)$-st block column by $B_{2s-1,2s}^{T}$ and add it
	to the $2s$-th block column.
	Under this transformation, all even-numbered block rows and block columns become zero.
	
	Let $\widetilde{B}$ denote the submatrix formed by the odd-numbered block rows and
	block columns of the resulting matrix.
	Then $\widetilde{B}$ admits the block representation
	\[
	\widetilde{B} = \widehat{B} \otimes I_2,
	\]
	where $\widehat{B} = (\widehat{B}_{sh})_{s,h=1}^{k+1}$ satisfies
	$\widehat{B}_{ss} = I_{k+1}$ and $\widehat{B}_{sh} = E_{sh} - E_{hs}$ for all $1\leq s\neq h\leq k+1$.
	In particular, $\widehat{B}$ coincides with the matrix $B(1,k+1)$
	introduced in \eqref{B(1,l)}.
	Hence $\widehat{B}$ is positive semidefinite.
	It follows that there exists a matrix $G$ such that $\widehat{B} = G^{T} G$, and therefore
	\[
	\widetilde{B}
	= (G^{T} G) \otimes I_2
	= (G \otimes I_2)^{T} (G \otimes I_2),
	\]
	which shows that $\widetilde{B}$ is positive semidefinite.
	Since congruence transformations preserve positive semidefiniteness,
	we conclude that $B(2,l)$ is positive semidefinite.
	
	Finally, since
	\[
	\mathrm{rank}(\widetilde{B})
	= \mathrm{rank}(\widehat{B}) \cdot \mathrm{rank}(I_2),
	\]
	and Proposition~\ref{prop:B(1,l)} yields
	$\mathrm{rank}(\widehat{B}) = 1 + \binom{k+1}{2}$,
	we obtain
	\[
	\mathrm{rank}\bigl(B(2,l)\bigr)
	= 2\Bigl(1 + \binom{k+1}{2}\Bigr)
	= \frac{l(l-2)}{4} + 2.
	\]
	This completes the proof.
\end{proof}

The matrix $B(2,l)$ constructed above meets every requirement of Proposition~\ref{QBcor}. Consequently, $G_F$ must be \emph{sos} for  $(m,l) = (2, 2k+2)$, where $k$ is any positive integer.

\subsection{The Unique Feasible Matrix for $(m,l)=(6,8)$}\label{m=6}
The Dirac matrices are defined in terms of the Pauli matrices (see \eqref{Pauli}) as follows:
$$
\gamma_0 := \sigma_3 \otimes I_2, \quad 
\gamma_j := \oi \sigma_2 \otimes \sigma_j ~ (j=1,2,3), \quad 
\gamma_5 := \oi \gamma_0\gamma_1\gamma_2\gamma_3.
$$

A $4 \times 4$ complex matrix representation of Clifford algebra $C_{5}$ (\textit{cf.} \cite{Wilson2021}) is 
$$\oi \gamma_0,\quad \gamma_1,\quad \gamma_2,\quad \gamma_3,\quad \oi \gamma_5.$$
Let $E_1,\cdots, E_5$ denote their corresponding real matrices, i.e., 
\begin{equation*}\label{cl of m=6}
	E_1:=\iota_4(\oi\gamma_0),\  E_2:=\iota_4(\gamma_1),\  E_3:=\iota_4(\gamma_2),\  E_4:=\iota_4(\gamma_3),\  E_5:=\iota_4(\oi\gamma_5),
\end{equation*}
where $\iota_{4}$ is defined as in \eqref{iota_k}. Consider the Clifford system $\{P_0,\cdots,P_6\}$ on $\mathbb{R}^{16}$ obtained by substituting $E_1,\cdots, E_5$ into  \eqref{Cliffordsys-alg}.

Let $E_0:=I_8$ and $T:=I_4\otimes \oi \sigma_2$. In the present case, for any $1\leq q\leq 8$, $R_q$ is a $6 \times 8$ matrix whose $\alpha$-th row (for $1\leq \alpha \leq 6$) is given by the $q$-th row of $E_{\alpha-1}$, according to definition \eqref{Define R_q}. Therefore
$$
R_1=
\begin{pmatrix}
	\sigma_3 & O_2 & O_2 & O_2 \\
	O_2 & O_2 & O_2 & I_2 \\
	O_2 & O_2 & \sigma_3 & O_2
\end{pmatrix},\quad
R_2=R_1 T,\quad
R_3=
\begin{pmatrix}
	O_2 & \sigma_3 & O_2 & O_2 \\
	O_2 & O_2 & \sigma_3 & O_2 \\
	O_2 & O_2 & O_2 & -I_2
\end{pmatrix},\quad
R_4=R_3 T,
$$
$$
R_5=
\begin{pmatrix}
	O_2 & O_2 & I_2 & O_2 \\
	O_2 & -I_2 & O_2 & O_2 \\
	-I_2 & O_2 & O_2 & O_2
\end{pmatrix},\quad
R_6=R_5 T,\quad
R_7=
\begin{pmatrix}
	O_2 & O_2 & O_2 & I_2 \\
	-\sigma_3 & O_2 & O_2 & O_2 \\
	O_2 & \sigma_3 & O_2 & O_2
\end{pmatrix},\quad
R_8=R_7 T.
$$
where $O_2$ denotes the $2\times 2$ zero matrix. To facilitate referencing in later parts of the paper, we introduce the notation $R_q^{(6)}$ for $R_q$ in the present context, i.e., 
\begin{equation}\label{Define R_q^6}
	R_q^{(6)}:=R_q,\quad R^{(6)}:=R=(R_1,\cdots,R_8),\quad 1\leq q\leq 8,
\end{equation}
where $R_q$ is as above.

Suppose $G_F(x) = |x|^4 - \sum_{\alpha=0}^{6} \langle P_\alpha x, x \rangle^2$ is \emph{sos}. By Proposition~\ref{QBcor} there exists a positive semidefinite matrix $B$ fulfilling \eqref{b_ijij}--\eqref{b_ji} such that $R_i B_{ij}=R_j$ for all $i,j$. From the second row of $R_1 B_{1k}=R_k$ for all $k$, we obtain $v_2 B_{1k}=-w_2 R_k$ for all $k$. According to Lemma \ref{Btauthm}, it follows that $B_{12}=\tau_1 (E_1)=\tau_2 (E_1)$. Similarly, from rows $3,4,5,6$ of $R_1 B_{1k}=R_k$ for all $k$, we deduce
$$
B_{17}=-\tau_1 (E_2),\quad  B_{18}=-\tau_1 (E_3),\quad  B_{15}=-\tau_1 (E_4),\quad  B_{16}=\tau_1 (E_5).
$$
Moreover, from rows $3,4$ of $R_5 B_{5k}=R_k$ for all $k$, we get $$B_{53}=\tau_5 (E_2), \quad B_{54}=\tau_5 (E_3).$$
Note that, specifically for any matrix $B_{ij}$ obtained from Lemma \ref{Btauthm}, there are two equivalent representations: $B_{ij}=\mp \tau_{i}(E_{\alpha-1})=\mp \tau_{j}(E_{\alpha-1})$. In the subsequent calculations, we may alternate between the two forms for convenience.

Let $E_6:=E_2E_4,\ E_7:=E_3E_4$. Since $B_{15}$ is orthogonal, by Lemma \ref{BBthm} we obtain
\begin{align*}
	&B_{13}=B_{15}B_{53}=-\tau_1 (E_4)\tau_5 (E_2)=-\tau_5 (E_4)\tau_5 (E_2)=I_{8}^{(5)} E_2 E_4 I_{8}^{(5)}=\tau_5 (E_6),\\
	&B_{14}=B_{15}B_{54}=-\tau_1 (E_4)\tau_5 (E_3)=-\tau_5 (E_4)\tau_5 (E_3)=I_{8}^{(5)} E_3 E_4 I_{8}^{(5)}=\tau_5 (E_7).
\end{align*}
$B= (B_{ij})_{i,j=1}^8$ is an $8\times8$ symmetric block matrix. We have now determined its first block row, denoted by $V^{(6)}$:
\begin{align}
	V^{(6)}:&=(B_{1j})_{j=1}^8\notag\\
	&=
	\begin{pmatrix}
		I_8 & \tau_{1}(E_{1}) & \tau_{5}(E_{6}) & \tau_{5}(E_{7}) & -\tau_{1}(E_{4}) & \tau_{1}(E_{5}) & -\tau_{1}(E_{2}) & -\tau_{1}(E_{3})
	\end{pmatrix}.\label{eq:V6-block-row}
\end{align}
Each block of $V^{(6)}$ is an orthogonal matrix, and all blocks are skew-symmetric except for the first block. Therefore, applying Lemma \ref{BBthm}, it follows that $B_{ij} = B_{i1}B_{1j} = B_{1i}^T B_{1j}$ for all $1 \leq i,j \leq 8$. This shows that $B$ is completely determined by its first block row.

Let
\begin{equation}\label{Define B^6}
	B^{(6)} := (V^{(6)})^{T} V^{(6)} .
\end{equation}
Then $B = (V^{(6)})^{T} V^{(6)} = B^{(6)}$.
Since $V^{(6)}$ has $l=8$ rows and full row rank, it follows that
\[
\mathrm{rank}\bigl(B^{(6)}\bigr)
= \mathrm{rank}\bigl(V^{(6)}\bigr)
= l = 8 .
\]

Finding a matrix $B$ that satisfies the three conditions of Proposition \ref{QBcor} is, in essence, a semidefinite programming problem. The discussion above demonstrates that any feasible solution of the SDP must be $B^{(6)}$. It remains, of course, to verify that $B^{(6)}$ does indeed fulfill all three conditions stipulated in Proposition \ref{QBcor}.

By \eqref{eq:V6-block-row} and \eqref{Define B^6}, if
\[
B^{(6)}=\bigl(B^{(6)}_{ij}\bigr)_{i,j=1}^{8},
\]
then
\[
B^{(6)}_{ij}=B_{1i}^TB_{1j},\qquad 1\leq i,j\leq 8.
\]
In particular, $B^{(6)}\succeq 0$, so condition \eqref{Bpsd} of Proposition~\ref{QBcor} is satisfied.

Using the explicit block row \eqref{eq:V6-block-row}, together with the Clifford-algebra relations for $E_1,\ldots,E_5$, the definitions $E_6=E_2E_4$, $E_7=E_3E_4$, and the definition of $\tau_k$ in \eqref{tau_{k}}, a blockwise computation gives
\[
B_{1i}^TB_{1i}=I_8,\qquad
B_{1i}^TB_{1j}+B_{1j}^TB_{1i}=0\quad (i\neq j).
\]
Equivalently,
\[
B^{(6)}_{ii}=I_8,\qquad
B^{(6)}_{ij}=-(B^{(6)}_{ij})^T\quad (i\neq j).
\]
Hence the block relations in \eqref{Bentry} are satisfied.

It remains to check \eqref{BRrelation}. From the explicit formulas for $R_1,\ldots,R_8$ in \eqref{Define R_q^6} and the block row \eqref{eq:V6-block-row}, we have
\[
R_j^{(6)}=R_1^{(6)}B_{1j},\qquad 1\leq j\leq 8.
\]
Therefore, for all $1\leq i,j\leq 8$,
\[
R_i^{(6)}B^{(6)}_{ij}
=
R_1^{(6)}B_{1i}B_{1i}^TB_{1j}
=
R_1^{(6)}B_{1j}
=
R_j^{(6)}.
\]
Thus \eqref{BRrelation} holds. Consequently $B^{(6)}$ satisfies all the conditions in Proposition~\ref{QBcor}.

By Proposition \ref{QBcor}, the corresponding form $G_F$ is \emph{sos} for the representative Clifford system with $(m,l)=(6,8)$.

At this stage, cases (1) and (2) of Corollary \ref{reduced cases} have been verified. Hence, the proof of Theorem \ref{sos thm} is complete.

\begin{rem}
    For the block matrix $B^{(6)} = (B_{ij})_{i,j=1}^8$, using the properties of $B^{(6)}$ we obtain
    \[
    B_{1i}B_{1k} + B_{1k}B_{1i} = -(B_{ik} + B_{ki}) = -2\delta_{ik} I_8 \quad \text{for all } i,k \geq 2,
    \]
    where $\delta_{ik}$ is the Kronecker delta. This shows that $\{B_{12}, \cdots, B_{18}\}$ generate a Clifford algebra $C_{7}$ on $\mathbb{R}^8$.
\end{rem}

\section{Ranks of \emph{sos} Representations and the Proof of Theorem~\ref{rank thm}}\label{Rank proof}

In this section, we prove Theorem~\ref{rank thm} by combining a general discussion of \emph{sos} representation ranks with the SDP characterization established earlier for $G_F$. We first develop, in Subsection~\ref{Ranks via SDP}, a general framework relating the ranks of \emph{sos} representations of a polynomial to the ranks of positive semidefinite Gram matrices, or equivalently, to the ranks of feasible matrices of the associated semidefinite program. We then apply this framework to the OT-FKM type forms $G_F$, and determine the possible ranks in each \emph{sos} case, thereby completing the proof of Theorem~\ref{rank thm}.

\subsection{Ranks of \emph{sos} Representations via SDP}\label{Ranks via SDP}

For a nonnegative polynomial $p(x)$ of degree $2d$ with an \emph{sos} representation
\[
p(x)=\sum_{k=1}^{N}p_k(x)^2,
\]
the number of linearly independent polynomials among $\{p_1,\dots,p_N\}$ is called the \emph{rank of the sos representation}, denoted by $r$. Clearly, $1\le r\le N$, and this value depends on the chosen representation.

Given a column vector of polynomials $z(x)=(z_1(x),\dots,z_q(x))^T$ whose components are linearly independent, a symmetric matrix $S$ satisfying
\[
p(x)=z(x)^T S z(x)
\]
is called a \emph{Gram matrix} of $p(x)$ with respect to $z(x)$. In particular, let
\begin{equation}\label{z(x)}
	z(x):=\bigl(x^\alpha\bigr)_{|\alpha|\le d}
\end{equation}
be the vector of all monomials of degree at most $d$. By Proposition~\ref{sostoSDP}, the polynomial $p(x)$ is \emph{sos} if and only if there exists a positive semidefinite Gram matrix of $p(x)$ with respect to $z(x)$.

Indeed, given an \emph{sos} representation, write each $p_k(x)=V_k^T z(x)$ and set $V=(V_1,\dots,V_N)^T$. Then
\[
p(x)=\sum_{k=1}^{N}(V_k^T z(x))^2=z(x)^T(V^T V)z(x),
\]
so $S=V^T V$ is a positive semidefinite Gram matrix. Moreover, the rank of the representation equals the rank of $S$:
\begin{equation}\label{r=rank(S)}
	\mathrm{rank}(S)=\mathrm{rank}(V)=r.
\end{equation}

Related but distinct from the rank of a specific \emph{sos} representation is the \emph{sos} rank, a notion that has been more extensively studied in the general theory of \emph{sos} decompositions. The \emph{sos rank} of $p(x)$, denoted by $\mathrm{rank}(p)$, is defined as
\[
\mathrm{rank}(p):=\min\Bigl\{N:\; p(x)=\sum_{k=1}^{N}p_k(x)^2\Bigr\},
\]
namely, the minimum number of squares in any \emph{sos} representation of $p(x)$.

The next proposition identifies the minimum possible representation rank with the \emph{sos} rank.

\begin{prop}
	Let
	\[
	r_{\min}(p):=\min\{r:\text{$p(x)$ admits an \emph{sos} representation of rank $r$}\}.
	\]
	Then
	\[
	r_{\min}(p)=\mathrm{rank}(p).
	\]
\end{prop}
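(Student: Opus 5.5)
We prove the two inequalities $r_{\min}(p)\ge \mathrm{rank}(p)$ and $r_{\min}(p)\le \mathrm{rank}(p)$ separately, using the Gram-matrix dictionary \eqref{r=rank(S)}. Both quantities are defined as minima over nonempty sets whenever $p$ is \emph{sos}. If $p$ is not \emph{sos}, both sets are empty and the statement is vacuous, so we assume $p$ is \emph{sos}.

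For $r_{\min}(p)\le \mathrm{rank}(p)$: take a representation $p=\sum_{k=1}^{N}p_k^2$ with $N=\mathrm{rank}(p)$ squares. Its rank $r$ equals $\dim\operatorname{span}\{p_1,\dots,p_N\}$, so $r\le N$. Hence $r_{\min}(p)\le r\le N=\mathrm{rank}(p)$. This direction is immediate.

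For $\mathrm{rank}(p)\le r_{\min}(p)$, the key step is to compress a representation of rank $r$ into one using exactly $r$ squares. Start from a representation $p=\sum_{k=1}^N p_k^2$ of rank $r=r_{\min}(p)$. Write $p_k=V_k^Tz(x)$ with $z(x)$ as in \eqref{z(x)}, and set $V=(V_1,\dots,V_N)^T$, so that $S=V^TV\succeq0$ and $\mathrm{rank}(S)=r$ by \eqref{r=rank(S)}. Since $S$ is positive semidefinite of rank $r$, it has a factorization $S=W^TW$ with $W$ an $r\times q$ matrix, obtained for instance from the spectral decomposition $S=\sum_{i=1}^r\lambda_i u_iu_i^T$ with $\lambda_i>0$, taking the rows of $W$ to be $\sqrt{\lambda_i}\,u_i^T$. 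Defining $\tilde p_i:=\sqrt{\lambda_i}\,u_i^Tz(x)$ for $i=1,\dots,r$ gives
\[
p(x)=z(x)^TSz(x)=\|Wz(x)\|^2=\sum_{i=1}^r\tilde p_i(x)^2 ,
\]
a representation with exactly $r$ squares. Therefore $\mathrm{rank}(p)\le r=r_{\min}(p)$.

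There is no real obstacle; the only point needing care is the identity $\mathrm{rank}(V^TV)=\mathrm{rank}(V)=\dim\operatorname{span}\{p_k\}$. This relies on the linear independence of the monomials in $z(x)$, which makes the map $V_k\mapsto V_k^Tz(x)$ injective, so the row space of $V$ is isomorphic to $\operatorname{span}\{p_k\}$. As a remark, the new summands $\tilde p_i$ are themselves linearly independent, since $W$ has full row rank. So the compressed representation has rank $r$ and simultaneously uses $r$ squares, confirming that the minimum is attained by a representation whose number of squares equals its rank.
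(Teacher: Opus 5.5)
Your proof is correct and follows essentially the same route as the paper. The first inequality is identical, and for the reverse one you likewise compress a rank-$r$ representation into exactly $r$ squares by factoring a positive semidefinite Gram matrix of rank $r$. The only difference is cosmetic: the paper factors the $r\times r$ positive definite matrix $C^TC$ written in a basis $q_1,\dots,q_r$ of $\operatorname{span}\{p_k\}$. You instead factor the monomial Gram matrix $S=V^TV$ via \eqref{r=rank(S)}, which is why you need, and correctly justify, the linear independence of the monomials in $z(x)$.
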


\begin{proof}
	Let
	\[
	p(x)=\sum_{k=1}^{N}p_k(x)^2
	\]
	be an \emph{sos} representation with the minimum number of squares, so that $N=\mathrm{rank}(p)$. The rank of this representation is at most $N$, hence 
	$$r_{\min}(p)\le N=\mathrm{rank}(p).$$
	
	Conversely, let
	\[
	p(x)=\sum_{k=1}^{N}p_k(x)^2
	\]
	be any \emph{sos} representation of rank $r$. Choose linearly independent polynomials $q_1(x),\dots,$ $q_r(x)$ spanning $\mathrm{span}\{p_1(x),\dots,p_N(x)\}$. Then $$p_k(x)=\sum_{i=1}^{r} c_{ki}q_i(x)$$ for some matrix $C=(c_{ki})\in\mathbb{R}^{N\times r}$. Writing $q(x)=(q_1(x),\dots,q_r(x))^T$, we obtain
	\[
	p(x)=\sum_{k=1}^{N}p_k(x)^2=q(x)^T C^T C\, q(x).
	\]
	Since the representation has rank $r$, the matrix $C$ has rank $r$. Therefore $C^T C$ is positive definite, so there exists an invertible matrix $M\in\mathbb{R}^{r\times r}$ such that $C^T C=M^T M$. Let $\tilde q(x):=Mq(x)$, with components $\tilde q_1(x),\dots,\tilde q_r(x)$. Then
	\[
	p(x)=q(x)^T M^T M q(x)=\tilde q(x)^T \tilde q(x)=\sum_{i=1}^{r}\tilde q_i(x)^2.
	\]
	Hence $p(x)$ admits an \emph{sos} representation with exactly $r$ squares, and so $\mathrm{rank}(p)\le r$. Since this holds for every \emph{sos} representation of rank $r$, we obtain $$\mathrm{rank}(p)\le r_{\min}(p).$$
	
	Combining the two inequalities yields $r_{\min}(p)=\mathrm{rank}(p)$.
\end{proof}

We now turn to a broader question: what are all possible ranks that can occur among \emph{sos} representations of $p(x)$? The following theorem answers this question by linking \emph{sos} representation ranks to the ranks of positive semidefinite Gram matrices of $p(x)$ with respect to $z(x)$. Equivalently, it identifies $\mathcal{R}(p)$ with the set of ranks attained by feasible solutions of the semidefinite program in Proposition~\ref{sostoSDP}.

\begin{thm}\label{rank equivalence}
	For a polynomial $p(x)$ of degree $2d$, let $\mathcal{R}(p)$ denote the set of all possible ranks of its \emph{sos} representations. Then
	\[
	\mathcal{R}(p) = \{ \mathrm{rank}(S) : S \succeq 0, \; p(x) = z(x)^T S z(x) \},
	\]
	where $ z(x) $ is defined as in \eqref{z(x)}.
\end{thm}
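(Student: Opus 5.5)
The plan is to prove the two inclusions separately, using the correspondence already set up in Proposition~\ref{sostoSDP} and identity \eqref{r=rank(S)}.

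For ``$\subseteq$'', take an \emph{sos} representation $p=\sum_{k=1}^N p_k^2$ of rank $r$. Since $\deg p=2d$, no $p_k$ can have degree exceeding $d$: the top-degree homogeneous parts of the $p_k$ of maximal degree would give a nonzero sum of squares of degree $>2d$, which cannot cancel. Hence every $p_k$ lies in the span of the monomials in $z(x)$, and we may write $p_k=V_k^Tz(x)$ with uniquely determined coefficient vectors $V_k$, because the entries of $z(x)$ are linearly independent. Set $V=(V_1,\dots,V_N)^T$ and $S=V^TV\succeq0$. Then $p=z^TSz$, and $\mathrm{rank}(S)=\mathrm{rank}(V)$. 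The map $c\mapsto c^Tz(x)$ is injective, so linear relations among the $p_k$ are exactly linear relations among the rows of $V$. Therefore $\mathrm{rank}(V)=\dim\operatorname{span}\{p_k\}=r$, as in \eqref{r=rank(S)}.

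For ``$\supseteq$'', take a feasible $S\succeq0$ with $p=z^TSz$ and let $s=\mathrm{rank}(S)$. Choose a factorization $S=V^TV$ with $V\in\mathbb{R}^{s\times q}$ of full row rank $s$; for example, use the spectral decomposition and keep only the nonzero eigenvalues. Define $p_k:=V_k^Tz(x)$ for $k=1,\dots,s$, where $V_k^T$ is the $k$-th row of $V$. Then $p=\|Vz(x)\|^2=\sum_{k=1}^s p_k^2$. By the same injectivity argument, $p_1,\dots,p_s$ are linearly independent, so this representation has rank exactly $s$. Hence $s\in\mathcal{R}(p)$.

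The only delicate point is the linear independence of the components of $z(x)$ as polynomials, together with the degree bound on the summands. The first holds because distinct monomials are linearly independent. The second is the standard leading-form argument above. If $p$ is homogeneous, one can instead use the degree-$d$ monomial vector as in Remark~\ref{rem of sostoSDP}; the argument is identical, since each $p_k$ is then forced to be a form of degree $d$.
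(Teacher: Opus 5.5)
Your proposal is correct and takes the same approach as the paper's proof. Both inclusions go through $S=V^TV$: the rank identity \eqref{r=rank(S)} gives one direction, and a full-row-rank factorization $V\in\mathbb{R}^{r\times q}$ together with linear independence of the entries of $z(x)$ gives the other. Your leading-form argument for $\deg p_k\le d$ and the injectivity of $c\mapsto c^Tz(x)$ just make explicit two points the paper uses implicitly.
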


\begin{proof}
	First let $r\in\mathcal{R}(p)$. Then $p(x)$ admits an \emph{sos} representation of rank $r$. As above, this representation produces a positive semidefinite Gram matrix $S$ satisfying
	\[
	p(x)=z(x)^T S z(x),
	\]
	and \eqref{r=rank(S)} gives $\mathrm{rank}(S)=r$. Hence
	\[
	\mathcal{R}(p)\subseteq \{ \mathrm{rank}(S) : S \succeq 0,\; p(x)=z(x)^T S z(x)\}.
	\]
	
	Conversely, let $S\succeq 0$ satisfy
	\[
	p(x)=z(x)^T S z(x),
	\]
	and let $\mathrm{rank}(S)=r$. Since $S\succeq 0$ and $\mathrm{rank}(S)=r$, there exists a matrix $V\in\mathbb{R}^{r\times q}$ of full row rank $r$ such that
	$
	S=V^T V
	$.
	 Let $V_1,\dots,V_r$ be the rows of $V$, and define
	\[
	q_i(x):=V_i^T z(x),\qquad i=1,\dots,r.
	\]
	Then
	\[
	p(x)=z(x)^T V^T V z(x)=\sum_{i=1}^{r} q_i(x)^2.
	\]
	Since the rows of $V$ are linearly independent and the components of $z(x)$ are linearly independent, the polynomials $q_1(x),\dots,q_r(x)$ are linearly independent. Thus this is an \emph{sos} representation of rank $r$, and therefore
	\[
	\{ \mathrm{rank}(S) : S \succeq 0,\; p(x)=z(x)^T S z(x)\}\subseteq \mathcal{R}(p).
	\]
	The two inclusions imply the desired equality.
\end{proof}

Theorem~\ref{rank equivalence} has an immediate consequence:

\begin{cor}
	If the positive semidefinite Gram matrix of $p(x)$ with respect to $z(x)$ is unique, then all \emph{sos} representations of $p(x)$ have the same rank. Equivalently, $\mathcal{R}(p)$ consists of a single rank.
\end{cor}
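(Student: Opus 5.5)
This corollary is an immediate consequence of Theorem~\ref{rank equivalence}, and I would prove it in two short steps.

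First, suppose the positive semidefinite Gram matrix of $p(x)$ with respect to $z(x)$ is unique; call it $S_0$. Then the feasible set $\{S\succeq 0: p(x)=z(x)^TSz(x)\}$ is exactly $\{S_0\}$. Theorem~\ref{rank equivalence} says $\mathcal{R}(p)$ is the set of ranks of the matrices in this feasible set, so $\mathcal{R}(p)=\{\mathrm{rank}(S_0)\}$. This set is a singleton, so every \emph{sos} representation of $p$ has rank $\mathrm{rank}(S_0)$.

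Second, I would address the word ``equivalently.'' I read it as restating the conclusion: saying that all \emph{sos} representations have the same rank is, by definition of $\mathcal{R}(p)$, the same as saying $\mathcal{R}(p)$ has exactly one element. This set is nonempty, because the existence of a psd Gram matrix makes $p$ \emph{sos} by Proposition~\ref{sostoSDP}. The reverse implication, that a singleton $\mathcal{R}(p)$ forces uniqueness of the Gram matrix, is not claimed and is false in general, so I would not try to prove it.

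There is no real obstacle here. The only care needed is that $z(x)$ consists of linearly independent monomials, which Theorem~\ref{rank equivalence} already uses. For later use with $G_F$, one would note that Remark~\ref{rem of sostoSDP} allows replacing $z(x)$ by the vector $X$ of degree-$d$ monomials: the proof of Theorem~\ref{rank equivalence} goes through verbatim for any linearly independent $z(x)$ that spans the polynomials appearing in the \emph{sos} representations.
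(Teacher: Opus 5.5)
Your proof is correct and follows the paper's route, which states the corollary as an immediate consequence of Theorem~\ref{rank equivalence}: uniqueness collapses the feasible set to $\{S_0\}$, and that theorem then gives $\mathcal{R}(p)=\{\mathrm{rank}(S_0)\}$. Your aside that the converse is ``false in general'' is mistaken, though. The Gram spectrahedron is compact, since for $S\succeq 0$ one has $\mathrm{tr}(SM)=\int p\,d\mu$ with $M=\int z z^{T}\,d\mu\succ 0$ for a Gaussian measure $\mu$. Every feasible $S$ has range inside that of a relative-interior point $S_0$. If a second feasible matrix exists, the last feasible point on the ray from $S_0$ through it is singular on the range of $S_0$, so it has strictly smaller rank. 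Hence a singleton $\mathcal{R}(p)$ does force uniqueness of the psd Gram matrix.
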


\subsection{Rank Sets of \emph{sos} Representations of $G_F$}\label{sec:sos-rank-GF}

We now focus on the ranks of \emph{sos} representations for the specific nonnegative polynomial 
\[
G_F(x)=|x|^4-\sum_{\alpha=0}^m\langle P_\alpha x,x\rangle^2,
\]
constructed from an OT-FKM type isoparametric polynomial. Here $\{P_0,\dots,P_m\}$ is a Clifford system on $\mathbb{R}^{2l}$ whose algebraic representation is given by \eqref{Cliffordsys-alg}, with the associated skew‑symmetric matrices $E_1,\dots,E_{m-1}$ generating a Clifford algebra on $\mathbb{R}^l$. By Theorem~\ref{sos thm}, $G_F$ is a sum of squares precisely when the multiplicity pair $(m_+, m_-)=(m, l-m-1)$ belongs to the list 
\[
(1,k),\;(2,2k-1),\;(3,4),\;(4,3)^I,\;(5,2),\;(6,1),\qquad k\in\mathbb{N}^+,
\]
where the superscript $I$ denotes the indefinite class. In the following we always assume that $(m,l)$ is one of these admissible pairs, so that $G_F$ admits at least one \emph{sos} representation.

By Lemma~\ref{SDP}, $G_F$ is \emph{sos} if and only if there exists a positive semidefinite matrix $Q$ satisfying $G_F(x)=X^{T}QX$. Recall the matrices $R_1,\cdots,R_l \in M(m\times l,~\R)$ defined in \eqref{Define R_q} and the aggregated matrix $R := (R_1, \dots, R_l) \in M(m \times l^2,~\R)$ from \eqref{Define R}. 
For the matrix $R$, define
\begin{equation}\label{B(R)}
	\mathcal{B}(R):=\{ B\succeq0 \; |\; 
	R_iB_{ij}=R_j\;(1\le i,j\le l),\;\eqref{b_ijij}\text{--}\eqref{b_ji}\text{ hold}\}.
\end{equation}
According to Proposition~\ref{QBcor}, the existence of $Q$ is equivalent to the existence of an $l^2 \times l^2$ matrix $B\in \mathcal{B}(R)$.

For an \emph{sos} representation $G_F = \sum_{k=1}^{N} p_k(x)^2$, let $r$ denote its rank, i.e., the number of linearly independent polynomials among $\{p_1,\dots,p_N\}$.  By \eqref{r=rank(S)}, $r$ equals the rank of a corresponding positive semidefinite Gram matrix $S$ with respect to $z(x)$.  For the quartic form $G_F$ the natural choice of the monomial basis is the vector $X$ of all quadratic monomials (see Remark~\ref{rem of sostoSDP}); consequently the Gram matrix becomes exactly the matrix $Q$ appearing in Lemma~\ref{SDP}.  Hence  
$
r = \mathrm{rank}(Q).
$
Moreover, Proposition~\ref{R(Q)=R(B-RTR)} gives $$r=\mathrm{rank}(Q) = \mathrm{rank}(B - R^{T}R).$$  Therefore, combining Proposition~\ref{QBcor} and Theorem~\ref{rank equivalence} we obtain
\begin{align}
	\mathcal{R}(G_F)&=\{ \mathrm{rank}(Q) : Q \succeq 0, \; G_F(x) = X^T Q X \}\notag\\
    &=\{ \mathrm{rank}(B - R^{T}R)  :  B\in\mathcal{B}(R)\}.\label{rank(B - RTR)}
\end{align}

The main result concerning $r \in \mathcal{R}(G_F)$ is summarized in Theorem~\ref{rank thm}.  
To prove this theorem, we first present several lemmas.  
Initially, we establish the invariance of $\mathcal{R}(G_F)$ under geometric equivalence of Clifford systems. 

\begin{lem}\label{R(G_F) invariant}
	Let $\{P_0,\dots,P_m\}$ and $\{P'_0,\dots,P'_m\}$ be two geometrically equivalent Clifford systems on $\mathbb{R}^{2l}$, and denote
	\[
	G_F(x):=|x|^4-\sum_{\alpha=0}^{m}\langle P_\alpha x,x\rangle^2,\qquad 
	G'_F(x):=|x|^4-\sum_{\alpha=0}^{m}\langle P'_\alpha x,x\rangle^2 .
	\]
	Then the sets of possible ranks of their \emph{sos} representations coincide, i.e.\ 
	\[
	\mathcal{R}(G_F)=\mathcal{R}(G'_F).
	\]
\end{lem}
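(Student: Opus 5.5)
The plan is to reuse the computation in the proof of Lemma~\ref{geom. equi. with sos}, which shows that geometrically equivalent Clifford systems produce forms related by an orthogonal change of variables. Concretely, I would pick $U\in O(\mathrm{Span}\{P_0,\dots,P_m\})$ and $W\in O(\mathbb{R}^{2l})$ with $P'_\alpha=W^TU(P_\alpha)W$. The same chain of equalities as in that lemma, using the orthogonality of $(u_\alpha^\beta)$, then gives
\[
G'_F(x)=G_F(Wx)\qquad\text{for all }x\in\mathbb{R}^{2l}.
\]
Once this identity is established, the statement becomes a general fact: representation ranks are invariant under an invertible linear change of variables. Since the relation is symmetric ($G_F(y)=G'_F(W^Ty)$), it suffices to prove the inclusion $\mathcal{R}(G_F)\subseteq\mathcal{R}(G'_F)$.

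For the inclusion, take an \emph{sos} representation $G_F=\sum_{k=1}^N p_k^2$ of rank $r$. Setting $p'_k(x):=p_k(Wx)$ gives the representation
\[
G'_F=\sum_{k=1}^N (p'_k)^2 .
\]
The substitution map $\phi_W:p\mapsto p\circ W$ is a linear isomorphism of the space of polynomials (in particular of quadratic forms), with inverse $\phi_{W^T}$. Hence
\[
\dim\operatorname{span}\{p'_1,\dots,p'_N\}=\dim\phi_W\bigl(\operatorname{span}\{p_1,\dots,p_N\}\bigr)=r,
\]
so $r\in\mathcal{R}(G'_F)$.

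Alternatively, the same argument can be phrased through Theorem~\ref{rank equivalence}. There is an invertible matrix $M_W$ with $X(Wx)=M_WX(x)$, so a psd Gram matrix $Q$ of $G_F$ yields the psd Gram matrix $M_W^TQM_W$ of $G'_F$, which has the same rank. I would present the direct span argument, since it is cleaner.

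I expect no real obstacle. The only point needing care is confirming that the identity $G'_F(x)=G_F(Wx)$ holds exactly as written, including the direction of $W$ versus $W^T$. This is already verified in Lemma~\ref{geom. equi. with sos}, and the symmetry of the argument makes the direction immaterial anyway.
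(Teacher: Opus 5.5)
Your proposal is correct and follows the same route as the paper: it takes the identity $G'_F(x)=G_F(Wx)$ from the proof of Lemma~\ref{geom. equi. with sos}, uses the substitution $x\mapsto Wx$ to show that linear independence, and hence the rank, of the summands is preserved, and gets the reverse inclusion from the inverse transformation. Your alternative Gram-matrix formulation via $M_W^TQM_W$ is also valid, but it is not needed.
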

\begin{proof}
	As shown in the proof of Lemma~\ref{geom. equi. with sos}, there exists an orthogonal matrix $W\in O(\mathbb{R}^{2l})$ such that
	\[
	G'_F(x)=G_F(Wx)\qquad\text{for all }x\in\mathbb{R}^{2l}.
	\]
	Assume $r\in\mathcal{R}(G_F)$. Then there exists an \emph{sos} representation $G_F(x)=\sum_{k=1}^{N}p_k(x)^2$ with rank $r$.  
	Substituting $x\mapsto Wx$ gives
	\[
	G'_F(x)=G_F(Wx)=\sum_{k=1}^{N}p_k(Wx)^2,
	\]
	which is an \emph{sos} representation of $G'_F$ whose rank is again $r$ because the polynomials $\{p_k(Wx)\}$ are linearly independent iff $\{p_k(x)\}$ are. Hence $r\in\mathcal{R}(G'_F)$.  
	The converse inclusion $\mathcal{R}(G'_F)\subset\mathcal{R}(G_F)$ follows by the same argument applied to the inverse transformation $W^{-1}$.  
	Therefore $\mathcal{R}(G_F)=\mathcal{R}(G'_F)$.
\end{proof}

Consequently, when describing $\mathcal{R}(G_F)$ for a given admissible pair $(m_+, m_-)$, it suffices to consider a single representative from each geometric equivalence class.

We now turn to a special case. Consider a Clifford system $\{P_0,\cdots,P_m\}$ expressed as in \eqref{Cliffordsys-alg}, with associated skew‑symmetric matrices $\{E_1,\cdots,E_{m-1}\}$. For any integer $m' < m$, the smaller Clifford system $\{P_0,$ $\cdots,$ $P_{m'}\}$ is obtained by taking the first $m'$ matrices; consequently, its corresponding skew‑symmetric matrices are simply $\{E_1,\cdots,E_{m'-1}\}$. Let $E_0 = I_l$. Using the notation in \eqref{Define R_q}, let $R_j$ (resp. $R'_j$) be the $m \times l$ (resp. $m' \times l$) matrix whose $\alpha$-th row is $v_j E_{\alpha-1}$ for $\alpha = 1,\cdots,m$ (resp. $\alpha = 1,\cdots,m'$). Set $R = (R_1,\cdots,R_l)$ and $R' = (R'_1,\cdots,R'_l)$.  Here $R'$ is simply formed by taking the first $m'$ rows of $R$.  The following lemma concerns the relation between $\mathcal{B}(R)$ and $\mathcal{B}(R')$.

\begin{lem}\label{B(R)inB(R')}
For any integer $m' < m$, let $R = (R_1,\cdots,R_l)\in M(m \times l^2,~\R)$ and let $R' = (R'_1,\cdots,R'_l)\in M(m' \times l^2,~\R)$ be the submatrix consisting of the first $m'$ rows of $R$. Then $$\mathcal{B}(R) \subseteq \mathcal{B}(R'),$$ where $\mathcal{B}(\cdot)$ is defined in \eqref{B(R)}.
\end{lem}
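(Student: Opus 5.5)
This inclusion follows directly from the definition \eqref{B(R)}: the conditions defining $\mathcal{B}(R)$ split into ones that do not involve $R$ at all and one relation per pair $(i,j)$ involving $R$. The plan is to take an arbitrary $B\in\mathcal{B}(R)$ and check that it satisfies every defining condition of $\mathcal{B}(R')$.

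The conditions $B\succeq 0$ and \eqref{b_ijij}--\eqref{b_ji} involve only the entries of $B$, not $R$. They therefore transfer verbatim from membership in $\mathcal{B}(R)$ to membership in $\mathcal{B}(R')$. The same $l^2\times l^2$ matrix $B$ is used, since the block size $l$ is unchanged when passing from $m$ to $m'$.

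The only condition depending on $R$ is $R'_iB_{ij}=R'_j$ for all $1\le i,j\le l$. Let $\Pi:=(I_{m'},\,O_{m'\times(m-m')})\in M(m'\times m,\R)$ be the projection onto the first $m'$ coordinates. By construction, the $\alpha$-th row of $R'_j$ is $v_jE_{\alpha-1}$ for $\alpha\le m'$, which is the $\alpha$-th row of $R_j$. Hence $R'_j=\Pi R_j$ for every $j$. Left-multiplying the relation $R_iB_{ij}=R_j$ (valid since $B\in\mathcal{B}(R)$) by $\Pi$ gives
\[
R'_iB_{ij}=\Pi R_iB_{ij}=\Pi R_j=R'_j .
\]
Equivalently, the row-by-row form of $R_iB_{ij}=R_j$ is $v_iE_{\alpha-1}B_{ij}=v_jE_{\alpha-1}$ for $\alpha=1,\dots,m$, and we simply keep the rows with $\alpha\le m'$.

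Combining the two steps gives $B\in\mathcal{B}(R')$. There is no real obstacle here. The only point to check is that the subsystem $\{P_0,\dots,P_{m'}\}$ is presented in the normal form \eqref{Cliffordsys-alg} with the same $E_0=I_l$ and the truncated list $E_1,\dots,E_{m'-1}$, so that $R'$ really is the row-truncation of $R$. This is exactly how $R'$ is defined before the lemma.
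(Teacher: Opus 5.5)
Your proposal is correct and follows essentially the same argument as the paper: the conditions not involving $R$ carry over unchanged, and truncating $R_iB_{ij}=R_j$ to its first $m'$ rows (your left-multiplication by $\Pi$) gives $R'_iB_{ij}=R'_j$. Your remark about the normal form is harmless but unnecessary, since $R'_j=\Pi R_j$ already follows from $R'$ being the first $m'$ rows of the block row $R=(R_1,\cdots,R_l)$.
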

\begin{proof}
	Take any $B \in \mathcal{B}(R)$. By definition, $B$ is positive semidefinite, satisfies conditions \eqref{b_ijij}--\eqref{b_ji}, and fulfills $R_i B_{ij} = R_j$ for all $1 \le i,j \le l$. Because $R'_i$ consists of the first $m'$ rows of $R_i$ and $R'_j$ consists of the first $m'$ rows of $R_j$, taking the first $m'$ rows of the equality $R_i B_{ij} = R_j$ gives $R'_i B_{ij} = R'_j$ for all $i,j$. Hence $B$ satisfies all three conditions required for membership in $\mathcal{B}(R')$, so $B \in \mathcal{B}(R')$. This proves the inclusion $\mathcal{B}(R) \subseteq \mathcal{B}(R')$.
\end{proof}

From \eqref{rank(B - RTR)}, the determination of $\mathcal{R}(G_F)$ reduces to the computation of $\mathrm{rank}(B - R^{T}R)$ for feasible matrices $B$. The following lemma provides a simple relation between this rank and the rank of $B$ itself.

\begin{lem}\label{rank(B)-m}
	For every $B\in\mathcal{B}(R)$ we have
	\[
	\mathrm{rank}(B - R^{T}R)=\mathrm{rank}(B)-\mathrm{rank}(R)=\mathrm{rank}(B)-m.
	\]
\end{lem}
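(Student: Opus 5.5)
We need to show that for $B\in\mathcal{B}(R)$ one has $\mathrm{rank}(B-R^TR)=\mathrm{rank}(B)-m$. The plan is to exploit the block matrix $\mathcal{B}=\begin{pmatrix} I_m & R\\ R^T & B\end{pmatrix}$ from the proof of Lemma~\ref{BRthm} and compute its rank in two ways.

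First, $B-R^TR$ is the Schur complement of the invertible block $I_m$ in $\mathcal{B}$. The standard factorization
\[
\mathcal{B}=\begin{pmatrix} I_m & 0\\ R^T & I\end{pmatrix}\begin{pmatrix} I_m & 0\\ 0 & B-R^TR\end{pmatrix}\begin{pmatrix} I_m & R\\ 0 & I\end{pmatrix}
\]
then gives $\mathrm{rank}(\mathcal{B})=m+\mathrm{rank}(B-R^TR)$.

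Second, we use the congruence performed in the proof of Lemma~\ref{BRthm}. For $B\in\mathcal{B}(R)$ we have $R_iB_{ij}=R_j$ for all $i,j$, so the off-diagonal blocks of \eqref{RBij} vanish. (It suffices to do the row and column operation with a single index, say $i=1$: subtract $R_1$ times the second block row from the first block row. The resulting first block row is $I_m-R_1B_{11}=I_m-R_1R_1^T=0$ by \eqref{R_qR^T_q}, and in the other positions it is $R_j-R_1B_{1j}=0$.) Thus $\mathcal{B}$ is congruent, via an invertible transformation, to $0_m\oplus B$. Since congruence by an invertible matrix preserves rank, $\mathrm{rank}(\mathcal{B})=\mathrm{rank}(B)$. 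Comparing the two computations gives $\mathrm{rank}(B-R^TR)=\mathrm{rank}(B)-m$.

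It remains to check that $\mathrm{rank}(R)=m$. This holds because $R_1R_1^T=I_m$ by \eqref{R_qR^T_q}, so the $m$ rows of $R_1$, and hence those of $R$, are linearly independent.

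The only delicate point is making sure the elimination is a genuine invertible congruence. I would carry it out with the single block index $i=1$, so that the transformation matrix is block unipotent with one off-diagonal block $-R_1$. This avoids any issue with performing the operations for all $i$ simultaneously, since doing so for every $i$ at once would subtract $l$ copies and would need care.
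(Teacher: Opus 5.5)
Your proof is correct, and it takes a different route from the paper's.

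\textbf{The paper's route.} The paper argues spectrally. Summing the constraints $R_iB_{ij}=R_j$ over $i$ gives $RB=lR$, and summing $R_jR_j^T=I_m$ gives $RR^T=lI_m$. So the columns of $R^T/\sqrt{l}$ are orthonormal eigenvectors of $B$ with eigenvalue $l$. Consequently $R^TR=l\,P_V$, where $P_V$ is the orthogonal projection onto $V=\mathrm{range}(R^T)$. This is exactly the part of the spectral decomposition of $B$ living on $V$, and $B-R^TR$ is the complementary part, of rank $\mathrm{rank}(B)-m$.

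\textbf{Your route.} You compute the rank of the bordered matrix $\mathcal{B}=\begin{pmatrix} I_m & R\\ R^T & B\end{pmatrix}$ twice. First via the Schur complement of $I_m$. Second via the unipotent congruence $T$ with the single off-diagonal block $-R_1$: it kills the first block row of $\mathcal{B}$, and then, by symmetry of $T\mathcal{B}T^T$, the first block column too, leaving $0_m\oplus B$.

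\textbf{What each buys.} Your argument is leaner in its hypotheses. It uses only the symmetry of $B$, $R_1R_1^T=I_m$, and the first-block-row constraints $R_1B_{1j}=R_j$. It needs neither $B\succeq 0$ nor the constraints with $i\ge 2$. Indeed, since $R=(R_1,0,\dots,0)B$, it is an instance of the Wedderburn rank-reduction formula. It also reuses the bordered matrix already introduced in Lemma~\ref{BRthm}. The paper's argument yields more structure in return: $R^TR$ commutes with $B$, and $R^TR$ and $B-R^TR$ have mutually orthogonal ranges. So $B=R^TR+(B-R^TR)$ is a splitting of the spectral decomposition of $B$.

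\textbf{A small slip.} The $(1,1)$ block after your row operation is $I_m-R_1R_1^T$, not $I_m-R_1B_{11}$, which is $m\times l$. The block $R_1-R_1B_{11}$ is already covered by your case $j=1$.
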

\begin{proof}
	The condition $R_iB_{ij}=R_j$ for all $i,j$ implies the matrix equality $RB = lR$.  Taking transposes yields $BR^{T}=lR^{T}$.  Hence every column of $R^{T}$ is an eigenvector of $B$ with eigenvalue $l$. Let $v_1,\dots,v_m$ denote the columns of $R^{T}$; they span a subspace $V\subseteq\mathbb{R}^{l^{2}}$.
	
	By \eqref{R_qR^T_q} we have $R_jR_j^{T}=I_m$ for each $j$; summing over $j=1,\cdots,l$ gives $RR^{T}=lI_m$.  Therefore $v_i^{T}v_j=l\delta_{ij}$, so the $v_i$ are pairwise orthogonal with norm $\sqrt{l}$.
	Set $u_k:=v_k/\sqrt{l}$ for $k=1,\dots,m$.  Then $\{u_1,\cdots,u_m\}$ is an orthonormal basis of $V$ and satisfies $Bu_k=lu_k$.  
	
	Let $r(B):=\mathrm{rank}(B)$.  Because $B\succeq0$, it admits a spectral decomposition with an orthonormal set of eigenvectors corresponding to its positive eigenvalues.  Explicitly, we may extend $\{u_1,\dots,u_m\}$ to an orthonormal set $\{u_k\}_{k=1}^{r(B)}$ of eigenvectors of $B$ with eigenvalues $\lambda_k>0$ such that
	\[
	B = \sum_{k=1}^{r(B)} \lambda_k u_k u_k^{T}.
	\]
	From the construction above we have $\lambda_1=\cdots=\lambda_m=l$.  Define
	\[
	B_V:=\sum_{k=1}^{m} l\,u_ku_k^{T}, \qquad 
	B_0:=\sum_{k=m+1}^{r(B)} \lambda_k u_k u_k^{T},
	\]
	so that $B=B_V+B_0$ and $B_VB_0=B_0B_V=0$ (since $u_k^{T}u_j=0$ for $k\le m<j$).
	
	Now observe that $R^{T}$ can be expressed as $R^{T}=\sqrt{l}\,(u_1,\cdots,u_m)$.  Hence
	\[
	R^{T}R = \bigl(\sqrt{l}\,(u_1,\cdots,u_m)\bigr)\bigl(\sqrt{l}\,(u_1,\cdots,u_m)\bigr)^{T}
	= l\sum_{k=1}^{m} u_ku_k^{T}=B_V.
	\]
	Therefore
	\[
	B-R^{T}R = (B_V+B_0)-B_V = B_0.
	\]
	Since the supports of $B_V$ and $B_0$ are orthogonal,
	\begin{align*}
		\mathrm{rank}(B-R^{T}R)&=\mathrm{rank}(B_0)\\
		&=\mathrm{rank}(B)-\mathrm{rank}(B_V)=\mathrm{rank}(B)-\mathrm{rank}(R)=\mathrm{rank}(B)-m,
	\end{align*}
	which completes the proof.
\end{proof}

As a consequence of Lemmas~\ref{B(R)inB(R')} and~\ref{rank(B)-m}, we obtain the following corollary.

\begin{cor}\label{cor:r+m-m'}
	Let $\{P_0,\dots,P_m\}$ be a Clifford system on $\mathbb{R}^{2l}$, and let
	$m'<m$. Define
	\[
	G_F(x):=|x|^4-\sum_{\alpha=0}^{m}\langle P_\alpha x,x\rangle^2,
	\qquad
	G'_F(x):=|x|^4-\sum_{\alpha=0}^{m'}\langle P_\alpha x,x\rangle^2 .
	\]
	Then, for any $r\in\mathcal{R}(G_F)$, one has
	$
	r+m-m'\in\mathcal{R}(G'_F).
	$
\end{cor}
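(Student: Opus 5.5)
The argument only needs to pass a feasible matrix from the larger system to the smaller one and compare the two rank formulas. Let $r\in\mathcal{R}(G_F)$. By \eqref{rank(B - RTR)}, applied to the Clifford system $\{P_0,\dots,P_m\}$, there is a feasible matrix $B\in\mathcal{B}(R)$ with
\[
r=\mathrm{rank}(B-R^TR).
\]
Lemma~\ref{rank(B)-m} then gives $\mathrm{rank}(B)=r+m$.

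The smaller system $\{P_0,\dots,P_{m'}\}$ is again of the form \eqref{Cliffordsys-alg}, with skew-symmetric matrices $E_1,\dots,E_{m'-1}$. Its matrix $R'$ consists of the first $m'$ rows of $R$. By Lemma~\ref{B(R)inB(R')}, the same $B$ lies in $\mathcal{B}(R')$, so it is a feasible matrix for the SDP attached to $G'_F$.

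Now apply Lemma~\ref{rank(B)-m} to the pair $(B,R')$. Its proof uses only $B\succeq0$, the relations $R'_iB_{ij}=R'_j$, and $R'_jR_j'^T=I_{m'}$. The last identity follows from \eqref{R_qR^T_q} for the smaller system, or directly by deleting rows. These give
\[
\mathrm{rank}(B-R'^TR')=\mathrm{rank}(B)-m'=r+m-m'.
\]

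Finally, apply \eqref{rank(B - RTR)} to $G'_F$. This is legitimate because $G'_F$ is sos: $G'_F=G_F+\sum_{\alpha=m'+1}^{m}\langle P_\alpha x,x\rangle^2$, and in any case feasibility of $B$ already certifies it. Hence $r+m-m'\in\mathcal{R}(G'_F)$.

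The only point to check is that Lemma~\ref{rank(B)-m} applies verbatim with $R'$ in place of $R$, in particular that $\mathrm{rank}(R')=m'$. This holds because $R'R'^T=lI_{m'}$.
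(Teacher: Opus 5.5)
Your proposal is correct and follows the paper's own route. You take a feasible $B\in\mathcal{B}(R)$ with $\mathrm{rank}(B-R^TR)=r$, pass it to $\mathcal{B}(R')$ via Lemma~\ref{B(R)inB(R')}, and compare $\mathrm{rank}(B)-m$ with $\mathrm{rank}(B)-m'$ using Lemma~\ref{rank(B)-m} and \eqref{rank(B - RTR)}. Your extra checks are left implicit in the paper, and they are fine: that $R'R'^T=lI_{m'}$, so the lemma applies to $R'$, and that $G'_F$ is sos.
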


\begin{proof}
	Let $\{E_1,\dots,E_{m-1}\}$ be the associated real matrix representation of the
	Clifford algebra induced by the Clifford system $\{P_0,\dots,P_m\}$, and set $E_0=I_l$.
	Let $R$ and $R'$ be the matrices constructed from
	$\{E_1,\dots,E_{m-1}\}$ and $\{E_1,\dots,E_{m'-1}\}$ via
	\eqref{Define R_q} and \eqref{Define R}, respectively.
	Then $R'$ is obtained from $R$ by taking its first $m'$ rows.
	By Lemma~\ref{B(R)inB(R')}, we have
	$
	\mathcal{B}(R)\subseteq\mathcal{B}(R').
	$
	
	If $r\in\mathcal{R}(G_F)$, then there exists $B\in\mathcal{B}(R)$ such that
	$
	\mathrm{rank}(B-R^{T}R)=r
	$
	by \eqref{rank(B - RTR)}.
	By Lemma~\ref{rank(B)-m}, this implies
	$
	\mathrm{rank}(B)=r+m.
	$
	Since $B\in\mathcal{B}(R)\subseteq\mathcal{B}(R')$, applying
	\eqref{rank(B - RTR)} again yields
	\[
	\mathrm{rank}(B-{R'}^{T}R')=\mathrm{rank}(B)-\mathrm{rank}(R')
	=r+m-m'.
	\]
	Hence $r+m-m'\in\mathcal{R}(G'_F)$, completing the proof.
\end{proof}

For a matrix $B = (B_{ij})_{i,j=1}^l \in \mathcal{B}(R)$, Lemma~\ref{properties of B} implies that each diagonal block $B_{ii}$ is the identity matrix $I_l$ and each off‑diagonal block $B_{ik}$ ($i\neq k$) is skew‑symmetric. Because $B$ possesses an $l\times l$ principal submatrix equal to $I_l$, its rank satisfies $\mathrm{rank}(B) \ge l$. The following lemma provides a necessary condition when $\mathrm{rank}(B)=l$.

\begin{lem}\label{rank(B)=l gives Cl algebra}
	Let $B = (B_{ij})_{i,j=1}^l \in \mathcal{B}(R)$. If $\mathrm{rank}(B)=l$, then the blocks satisfy the Clifford relations
	\[
	B_{1i}B_{1j} + B_{1j}B_{1i} = -2\delta_{ij} I_l \qquad (2\le i,j \le l),
	\]
	which implies that $\{B_{12},\dots,B_{1l}\}$ define a representation of the Clifford algebra $C_{l-1}$ on $\mathbb{R}^l$.
	
\end{lem}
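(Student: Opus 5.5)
Since $B\in\mathcal{B}(R)$ is positive semidefinite of rank $l$, factor it as $B=V^TV$ with $V\in\mathbb{R}^{l\times l^2}$ of full row rank $l$. Write $V=(V_1,\dots,V_l)$ with blocks $V_i\in M(l,\mathbb{R})$, so that $B_{ij}=V_i^TV_j$. The first block row is $(B_{11},\dots,B_{1l})=V_1^TV$.

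The first step is to show $V_1$ is invertible. Since $B_{11}=I_l$ by Lemma~\ref{properties of B}, we get $V_1^TV_1=I_l$. Hence $V_1$ is orthogonal, and $V=V_1B_{11}^{-1}\cdots$; more precisely $V=(V_1^T)^{-1}(V_1^TV)=V_1\,(B_{11},\dots,B_{1l})$. Thus $B=V^TV=(B_{1i})^T V_1^TV_1(B_{1j})$, i.e.
\[
B_{ij}=B_{1i}^TB_{1j}\qquad(1\le i,j\le l).
\]
In the same way, $V_i^TV_i=I_l$ shows every $V_i$ is orthogonal. Therefore each $B_{1i}=V_1^TV_i$ is orthogonal, and Lemma~\ref{BBthm} applied with the orthogonal block $B_{1i}$ gives the same identity.

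Next, use the structure from Lemma~\ref{properties of B}: for $i\ge2$ the block $B_{1i}$ is skew-symmetric, so $B_{1i}^T=-B_{1i}$. For $2\le i,j\le l$ the identity above gives $B_{ij}=-B_{1i}B_{1j}$. Taking $i=j$ gives $B_{1i}^2=-B_{ii}=-I_l$. For $i\ne j$, the block $B_{ij}$ is skew-symmetric, so
\[
-B_{1i}B_{1j}=B_{ij}=-B_{ij}^T=(B_{1i}B_{1j})^T=B_{1j}^TB_{1i}^T=B_{1j}B_{1i},
\]
hence $B_{1i}B_{1j}+B_{1j}B_{1i}=0$. Combining the two cases yields $B_{1i}B_{1j}+B_{1j}B_{1i}=-2\delta_{ij}I_l$ for $2\le i,j\le l$. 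So $\{B_{12},\dots,B_{1l}\}$ are $l-1$ skew-symmetric orthogonal anticommuting matrices on $\mathbb{R}^l$, which is a representation of $C_{l-1}$.

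The only nontrivial step is the factorization argument that the rank condition forces $B_{ij}=B_{1i}^TB_{1j}$, namely that $V_1$ is square and orthogonal. This works precisely because $\mathrm{rank}(B)=l$ equals the size of the identity block $B_{11}$. The remaining steps are formal consequences of the block symmetries already recorded in Lemma~\ref{properties of B}. The condition $R_iB_{ij}=R_j$ is not needed.
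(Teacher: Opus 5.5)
Your proof is correct and follows essentially the same route as the paper: you factor $B=V^TV$ with $V\in\mathbb{R}^{l\times l^2}$, use $B_{11}=I_l$ to see that the square block $V_1$ is orthogonal, and deduce $B_{ij}=B_{1i}^TB_{1j}$; you then read off the Clifford relations from the skew-symmetry recorded in Lemma~\ref{properties of B}. The only cosmetic differences are that you invoke $B_{ij}^T=-B_{ij}$ where the paper invokes the equivalent relation $B_{ji}=-B_{ij}$, and your side remark via Lemma~\ref{BBthm} is a redundant second derivation of the same identity.
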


\begin{proof}
	Since $B\succeq0$ and $\mathrm{rank}(B)=l$, there exists a matrix $U\in\mathbb{R}^{l\times l^{2}}$ with $\mathrm{rank}(U)=l$ such that $B=U^{T}U$.  Write $U$ in block form as $U=(U_{1}, \cdots, U_{l})$ where each $U_{i}\in\mathbb{R}^{l\times l}$.  Then $B_{ij}=U_{i}^{T}U_{j}$ for all $1\le i,j\le l$.  From $B_{11}=I_{l}$ we obtain $U_{1}^{T}U_{1}=I_{l}$, i.e. $U_{1}$ is orthogonal.
	
	Define $V:=(B_{11}, B_{12}, \cdots, B_{1l})$; this is the matrix formed by the first $l$ rows of $B$.  Because $B_{1j}=U_{1}^{T}U_{j}$, we have
	\[
	V=(U_{1}^{T}U_{1}, U_{1}^{T}U_{2}, \cdots, U_{1}^{T}U_{l})=U_{1}^{T}U .
	\]
	Since $U_{1}$ is orthogonal, $U=U_{1}V$ and consequently
	\begin{equation}\label{eq:B=VTV}
		B=U^{T}U=V^{T}U_{1}^{T}U_{1}V=V^{T}V .
	\end{equation}
	Note that $B_{11}=I_{l}$ and, for $j\ge2$, Lemma~\ref{properties of B} gives $B_{1j}^{T}=-B_{1j}$.  Moreover $B_{jj}=I_{l}$ implies $B_{1j}^{T}B_{1j}=I_{l}$; together with skew‑symmetry this yields $(-B_{1j})B_{1j}=I_{l}$, hence $B_{1j}^{2}=-I_{l}$.  Therefore each $B_{1j}\;(j\ge2)$ is an orthogonal skew‑symmetric matrix.
	
	Now consider $B_{1i}B_{1j}+B_{1j}B_{1i}$ for $2\le i,j\le l$.  Using $B_{1i}^{T}=-B_{1i}$ and the relation $B_{ij}=B_{1i}^{T}B_{1j}$ from \eqref{eq:B=VTV}, we obtain
	\[
	B_{1i}B_{1j}= -B_{1i}^{T}B_{1j}= -B_{ij},\qquad   
	B_{1j}B_{1i}= -B_{1j}^{T}B_{1i}= -B_{ji}.
	\]
	Hence
	\[
	B_{1i}B_{1j}+B_{1j}B_{1i}= -(B_{ij}+B_{ji}).
	\]
	If $i\neq j$, Lemma~\ref{properties of B} tells us $B_{ij}+B_{ji}=0$.  If $i=j$, we already have $B_{1i}^{2}=-I_{l}$, whence $B_{1i}B_{1i}+B_{1i}B_{1i}=2B_{1i}^{2}=-2I_{l}$.  Thus in all cases
	\[
	B_{1i}B_{1j}+B_{1j}B_{1i}= -2\delta_{ij}I_{l}\qquad (2\le i,j\le l),
	\]
	which are precisely the defining relations of the Clifford algebra $C_{l-1}$ on $\mathbb{R}^{l}$.  Therefore $\{B_{12},\dots,B_{1l}\}$ generates a Clifford algebra $C_{l-1}$.
\end{proof}

Equipped with the SDP characterization developed above (especially the description of $\mathcal{R}(G_F)$ via the feasible solutions set $\mathcal{B}(R)$) and the structural lemmas on the matrix $B$, we now turn to a case‑by‑case determination of $\mathcal{R}(G_F)$ for
\[
(m_+,m_-)=(1,k),\;(2,2k-1),\;(3,4),\;(4,3)^I,\;(5,2),\;(6,1),\qquad k\in\mathbb{N}^+ .
\]  
For each of these admissible pairs we shall examine the possible ranks of \emph{sos} representations.  Because $\mathcal{R}(G_F)$ is invariant under geometric equivalence of Clifford systems (Lemma~\ref{R(G_F) invariant}), it suffices to analyse one representative from each geometric equivalence class.  In the following subsections we treat the two infinite families $(1,k)$ and $(2,2k-1)$ and the four remaining cases
$(3,4)$, $(4,3)^I$, $(5,2)$, and $(6,1)$ separately, using the concrete form of the matrices $R$ and the constraints on $B$ to obtain a complete description of $\mathcal{R}(G_F)$.

\subsection{Possible Ranks for $(m_+,m_-)=(3,4),(4,3)^I,(5,2),(6,1)$}\label{m=3,4,5,6}

For the four cases $(m_+,m_-)=(3,4),(4,3)^I,(5,2),(6,1)$, the corresponding values of $m$ are $3, 4, 5, 6$ and  $l$ is always $8$, because $(m_+,m_-)=(m, l-m-1)$.  
By Lemma~\ref{R(G_F) invariant}, which states that $\mathcal{R}(G_F)$ is invariant under geometric equivalence of Clifford systems, it suffices to examine a single Clifford system representation for each case.

In this subsection, we adopt the same Clifford algebra $E_1,\cdots,E_5$ on $\mathbb{R}^8$ and the same Clifford system $P_0,\cdots,P_6$ on $\mathbb{R}^{16}$ as in Subsection~\ref{m=6}.  For each $ m \in \{3,4,5,6\} $, define
\[
G_F^{(m)}(x) := |x|^4 - \sum_{\alpha=0}^{m} \langle P_\alpha x, x \rangle^2,
\]
which is precisely the polynomial $ G_F $ corresponding to the pair $(m,l)=(m,8)$. 

Let $E_0=I_8$. For $m=3,4,5,6$ and $1\leq q\leq 8$, let $R^{(m)}_q$ be the matrix obtained from $\{E_0,\cdots,E_{m-1}\}$ via Definition \eqref{Define R_q}; and let $R^{(m)}:=(R^{(m)}_1,\cdots,R^{(m)}_8)$ (note that $R^{(6)}_q$ and $R^{(6)}$ are the same as defined in \eqref{Define R_q^6}). By definition, $R^{(m)}$ is the submatrix of $R^{(6)}$ consisting of its first $m$ rows. Consequently, Lemma \ref{B(R)inB(R')} yields the chain of inclusions
\begin{equation}\label{chain of inclusions}
	\mathcal{B}(R^{(6)}) \subseteq \mathcal{B}(R^{(5)}) \subseteq \mathcal{B}(R^{(4)}) \subseteq \mathcal{B}(R^{(3)}).
\end{equation}
In Subsection~\ref{m=6} we have shown that $\mathcal{B}(R^{(6)})=\{B^{(6)}\}$, where $B^{(6)}$ is defined in \eqref{Define B^6}. Next we show that $\mathcal{B}(R^{(3)})$ likewise consists of a single element; that is, the following SDP for the matrix $B = (B_{ij})_{i,j=1}^{8}$ admits a unique solution:
\begin{equation}\label{SDP for m=3}
	\begin{cases}
		B \succeq 0, \\[2pt]
		R^{(3)}_{i} B_{ij} = R^{(3)}_{j}, \quad 1 \le i, j \le 8, \\[2pt]
		\text{conditions } \eqref{b_ijij}\text{--}\eqref{b_ji} \text{ hold}.
	\end{cases}
\end{equation}

As in Subsections \ref{m=3 l=4r} and \ref{m=6}, the solution of $\mathcal{B}(R^{(3)})$ is obtained analogously; we outline it briefly. Recall that $\{v_q\}_{q=1}^l \subset \mathbb{R}^l$ and $\{w_{\alpha}\}_{\alpha=1}^m \subset \mathbb{R}^m$ are the standard basis row vectors. Computing the second and third rows of $R^{(3)}_{1} B_{1j} = R^{(3)}_{j}$, the third row of $R^{(3)}_{2} B_{2j} = R^{(3)}_{j}$, the second and third rows of $R^{(3)}_{4} B_{4j} = R^{(3)}_{j}$, and the second row of $R^{(3)}_{5} B_{5j} = R^{(3)}_{j}$ yields  
\begin{align*}
	v_2 B_{1j} &= - w_{2} R^{(3)}_{j}, &  v_7 B_{1j} &= w_{3} R^{(3)}_{j}, &  v_8 B_{2j} &= w_{3} R^{(3)}_{j},\\
	v_3 B_{4j} &= w_{2} R^{(3)}_{j}, &  v_6 B_{4j} &= w_{3} R^{(3)}_{j}, &  v_6 B_{5j} &= w_{2} R^{(3)}_{j}.
\end{align*}
By Lemma \ref{Btauthm}, we obtain
\begin{align}
	B_{12} &= \tau_{1}(E_{1}), &
	B_{17} &= - \tau_{1}(E_{2}), &
	B_{28} &= - \tau_{2}(E_{2}), \label{B12,17,28}\\
	B_{43} &= - \tau_{4}(E_{1}), &
	B_{46} &= - \tau_{4}(E_{2}), &
	B_{56} &= - \tau_{5}(E_{1}),\label{B43,46,56}
\end{align}
all of which are orthogonal matrices.

Lemma \ref{properties of B} gives $B_{ii}=I_l$, and $B_{ik}$ is skew-symmetric with $B_{ki}=-B_{ik}$ for $i \neq k$. Since 
\[
R^{(3)}_{1} B_{15} = R^{(3)}_{5}\quad \text{and}\quad R^{(3)}_{5} B_{15} = -R^{(3)}_{5} B_{51}=-R^{(3)}_{1},
\] 
the 1st, 2nd, 3rd, 5th, 6th, and 7th rows of $B_{15}$ are completely determined. Moreover, by the skew-symmetry of $B_{15}$, only the $(4,8)$ and $(8,4)$ entries of $B_{15}$ remain undetermined. Denote the $(4,8)$ entry by $d$; then the $(8,4)$ entry is $-d$. 

On the other hand, the relation 
$$R^{(3)}_{6} B_{16} = -R^{(3)}_{6} B_{61}=-R^{(3)}_{1}$$ 
yields $v_4 B_{16} =  w_{3} R^{(3)}_{1}$, that is, the fourth row of $B_{16}$ equals $ w_{3} R^{(3)}_{1}$. Since $B_{56}$ is an orthogonal matrix, by Lemma \ref{BBthm} we have
\[
B_{15}=-B_{51}=-B_{56}B_{61}=B_{56}B_{16}.
\]
Thus,
\[
-d=(v_{8}B_{56}) (-v_{4}B_{16})^T=(v_{8}\tau_{5}(E_{1})) (w_{3} R^{(3)}_{1})^{T}=(-v_7) (v_7)^{T}=-1.
\]
Then $d=1$, and we have now completely determined the matrix $B_{15}$:
\[
B_{15} = \begin{pmatrix}
	0 & 0 & 0 & 0 & 1 & 0 & 0 & 0 \\
	0 & 0 & 0 & 0 & 0 & -1 & 0 & 0 \\
	0 & 0 & 0 & 0 & 0 & 0 & 1 & 0 \\
	0 & 0 & 0 & 0 & 0 & 0 & 0 & 1 \\
	-1 & 0 & 0 & 0 & 0 & 0 & 0 & 0 \\
	0 & 1 & 0 & 0 & 0 & 0 & 0 & 0 \\
	0 & 0 & -1 & 0 & 0 & 0 & 0 & 0 \\
	0 & 0 & 0 & -1 & 0 & 0 & 0 & 0
\end{pmatrix}=-\tau_{1}(E_{4}).
\]
$B_{15}$ is orthogonal, and the six matrices in \eqref{B12,17,28} and \eqref{B43,46,56} are also orthogonal. Hence, by Lemma \ref{BBthm}, we obtain
\[
B_{16}=B_{15}B_{56},\quad B_{14}=-B_{16}B_{46},\quad B_{13}=B_{14}B_{43},\quad B_{18}=B_{12}B_{28}.
\]
This implies that all $B_{1i}$ ($1\leq i\leq 8$) are orthogonal matrices. Consequently, for $1\leq i,j\leq 8$,  
\[
B_{ij}=B_{i1}B_{1j}=B_{1i}^TB_{1j},\quad 1\leq i,j\leq 8.
\]

Thus, the SDP \eqref{SDP for m=3} has been shown to have a unique solution; i.e., the set $\mathcal{B}(R^{(3)})$ consists of a single element. Applying the inclusion relations in \eqref{chain of inclusions} yields
\[
\mathcal{B}(R^{(3)}) = \mathcal{B}(R^{(4)}) = \mathcal{B}(R^{(5)}) = \mathcal{B}(R^{(6)}) = \{B^{(6)}\}.
\]
From \eqref{rank(B - RTR)} and Lemma \ref{rank(B)-m}, it follows that
\[
\mathcal{R}(G_F^{(m)})=\Big\{\mathrm{rank}\(B^{(6)} - (R^{(m)})^{T}(R^{(m)})\)\Big\}=\{8-m\}.
\]
In summary, let $r$ denote the rank of any \emph{sos} representation of $G_F$. Then:
\begin{enumerate}
	\item For $(m_+, m_-) = (3,4)$, $r = 8 - 3 = 5$.
	\item For $(m_+, m_-) = (4,3)^I$, $r = 8 - 4 = 4$.
	\item For $(m_+, m_-) = (5,2)$, $r = 8 - 5 = 3$.
	\item For $(m_+, m_-) = (6,1)$, $r = 8 - 6 = 2$.
\end{enumerate}

\subsection{Possible Ranks for $(m_+,m_-)=(1,k)$}\label{sos-rank for m=1}

As discussed in Subsection~\ref{sos (1,k+2)}, we recall the case $(m,l) = (1,k+2)$. 
Here the matrices $P_0$ and $P_1$ are fixed constant matrices (see \eqref{Cliffordsys-alg}). 
We consider
\[
G_F(x) = |x|^4 - \sum_{\alpha=0}^{1} \langle P_\alpha x, x \rangle^2 .
\]
In this section, we write $R_q$ and $R$ for $R_q(1,l)$ and $R(1,l)$, respectively.
Subsection~\ref{sos (1,k+2)} shows that $\mathcal{B}(R)$ is nonempty and constructs an element $B(1,l) \in \mathcal{B}(R)$, which in turn implies that $G_F$ is \emph{sos}.
 
Let $B \in \mathcal{B}(R)$ satisfy conditions \eqref{b_ijij}--\eqref{b_ji} and write
\[
B=\bigl(b_{ij,kh}\bigr)_{l^2\times l^2}=\bigl(B_{ik}\bigr)_{i,k=1}^{l}.
\]
We claim that
\[
l \le \mathrm{rank}(B) \le \frac{l(l-1)}{2}+1.
\]

	By Lemma~\ref{properties of B}, one has $B_{ii}=I_l$ for every $i$. Hence, for each $i$, the diagonal block $B_{ii}$ is an $l\times l$ principal submatrix of $B$ and is nonsingular. Therefore,
	\[
	\mathrm{rank}(B)\ge \mathrm{rank}(B_{ii})=l.
	\]
	For the upper bound, index the rows of $B$ by ordered pairs $(i,j)$ with $1\le i,j\le l$, and denote by $\rho_{ij}\in\mathbb{R}^{1\times l^2}$ the $(i,j)$-row of $B$ in this indexing (equivalently, the row corresponding to the $j$-th row of the $i$-th block row).
	Condition \eqref{b_iikk} implies that all diagonal rows coincide, namely
	\[
	\rho_{11}=\rho_{22}=\cdots=\rho_{ll}.
	\]
	Moreover, by \eqref{b_ji}, for $i\neq j$ the off-diagonal rows satisfy
	\[
	\rho_{ji}=-\rho_{ij}.
	\]
	Consequently, the row space of $B$ is spanned by the single row $\rho_{11}$ together with the rows $\rho_{ij}$ for $1\le i<j\le l$. 
	Since $\mathrm{rank}(B)=\dim(\mathrm{Row}(B))$, we obtain
	\[
	\mathrm{rank}(B)\le \frac{l(l-1)}{2}+1,
	\]
	as desired.
	
By Proposition~\ref{prop:B(1,l)}, the upper bound of $\mathrm{rank}(B)$ is attained, for instance, when $B=B(1,l)$.
We now turn to the characterization of the equality case
$\mathrm{rank}(B)=l$ for the lower bound.

Assume that $\mathrm{rank}(B)=l$.
By Lemma~\ref{rank(B)=l gives Cl algebra},
the matrices $\{B_{12},\dots,B_{1l}\}$ define a representation of the Clifford
algebra $C_{l-1}$ on $\mathbb{R}^l$.
In particular, $C_{l-1}$ admits a real representation on $\mathbb{R}^l$.
On the other hand, the minimal dimension of an irreducible real representation
of $C_{l-1}$ is given by $\delta(l)$
(see Table~\ref{representation of Cl}).
Since $l\geq 3$, the condition $\mathrm{rank}(B)=l$ can only occur when
$l=4$ or $l=8$.
We now examine these two cases separately.

\noindent
\emph{Case $l=4$.}
By Lemma~\ref{B(R)inB(R')},
\[
B(2,4)\in \mathcal{B}(R(2,4)) \subseteq \mathcal{B}(R(1,4)).
\]
Moreover, Proposition~\ref{prop:B(2,l)} shows that
\[
\mathrm{rank}\bigl(B(2,4)\bigr)=4,
\]
which attains the lower bound.

\noindent
\emph{Case $l=8$.}
By Lemma~\ref{B(R)inB(R')},
\[
B^{(6)}\in \mathcal{B}(R(6,8)) \subseteq \mathcal{B}(R(1,8)).
\]
From the definition \eqref{Define B^6}, it is immediate that
\[
\mathrm{rank}\bigl(B^{(6)}\bigr)=8,
\]
which again attains the lower bound.

\noindent
Consequently, the equality $\mathrm{rank}(B)=l$ can occur if and only if
$l=4$ or $l=8$.

From \eqref{rank(B - RTR)} and Lemma \ref{rank(B)-m}, we have
\[
\mathcal{R}(G_F)
=\Big\{\mathrm{rank}\bigl(B-R^{T}R\bigr):B\in\mathcal{B}(R)\Big\}
=\{\mathrm{rank}(B)-1:B\in\mathcal{B}(R)\}.
\]
Let $r$ denote the rank of an arbitrary \emph{sos} representation of $G_F$. Then $r\in\mathcal{R}(G_F)$ and hence
\[
l-1\le r\le \frac{l(l-1)}{2}.
\]
Moreover, the upper bound is attainable, for instance by taking $B=B(1,l)$.
Finally, the lower bound $r=l-1$ is attainable if and only if $l=4$ or $l=8$.

\subsection{Possible Ranks for $(m_+,m_-)=(2,2k-1)$}\label{sos-rank for m=2}

By Lemma~\ref{R(G_F) invariant}, which states that $\mathcal{R}(G_F)$ is invariant
under geometric equivalence of Clifford systems, it suffices to examine
a single Clifford system representation in the case $(m_+,m_-)=(2,2k-1)$.

As discussed in Subsection~\ref{sos (2,2k+2)}, we recall the case $(m,l) = (2,2k+2)$.
Here the matrices $P_0$, $P_1$, and $P_2$ are fixed constant matrices chosen
as in Subsection~\ref{sos (2,2k+2)}.
We consider
\[
G_F(x) = |x|^4 - \sum_{\alpha=0}^{2} \langle P_\alpha x, x \rangle^2 .
\]
In this section, we write $R_q$ and $R$ for $R_q(2,l)$ and $R(2,l)$, respectively.
Subsection~\ref{sos (2,2k+2)} shows that $\mathcal{B}(R)$ is nonempty and constructs
an element $B(2,l) \in \mathcal{B}(R)$, which in turn implies that $G_F$ is
\emph{sos}.

Let $B \in \mathcal{B}(R)$ satisfy conditions \eqref{b_ijij}--\eqref{b_ji} and write
\[
B=\bigl(b_{ij,kh}\bigr)_{l^2\times l^2}
=\bigl(B_{ik}\bigr)_{i,k=1}^{l}.
\]
Then
\begin{equation}\label{range of rank(B) of m=2}
	l \le \mathrm{rank}(B) \le \frac{l(l-2)}{4}+2.
\end{equation}

By Lemma~\ref{properties of B}, one has $B_{ii}=I_l$ for every $i$.
Hence, for each $i$, the diagonal block $B_{ii}$ is an $l\times l$
principal submatrix of $B$ and is nonsingular. Therefore,
\[
\mathrm{rank}(B)\ge \mathrm{rank}(B_{ii})=l.
\]

We now prove the upper bound.
Index the rows of $B$ by ordered pairs $(i,j)$ with $1\le i,j\le l$,
and denote by $\rho_{ij}\in\mathbb{R}^{1\times l^2}$ the $(i,j)$-row of $B$
in this indexing.

As shown in Subsection~\ref{sos (2,2k+2)}, for each $1\le s\le k+1$
the block $B_{2s-1,2s}=-\tau_{2s}(E_1)$ is an orthogonal matrix. Moreover, by Lemma~\ref{BBthm} and the skew-symmetry relation
$B_{2s,2s-1}=-B_{2s-1,2s}$, for every $1\le j\le l$ one has
\[
B_{2s,j}
= B_{2s,2s-1} B_{2s-1,j}
= -\, B_{2s-1,2s} B_{2s-1,j}.
\]
For each such $s$, we left-multiply the $(2s-1)$-st block row of $B$
by $B_{2s-1,2s}$ and add it to the $2s$-th block row.
These elementary row operations eliminate all even block rows of $B$.
Consequently, the row space of $B$ is spanned by at most $l^2/2$ rows.

On the other hand, by \eqref{R2s-1} we have
\[
R_{2s-1}\bigl(B_{2s-1,1},\cdots,B_{2s-1,l}\bigr)
=(R_1,\cdots,R_l)=R
\]
for every $s$.
By the explicit construction in Section~\ref{sos (2,2k+2)},
one has
$
R_{2s-1}=R_1 L_s,
$
where $R_1$ and $L_s$ are given there.
Thus, for each $s$ the rows $\rho_{2s-1,2s-1}$ and
$\rho_{2s-1,2s}$ coincide with the first and second rows of $R$,
respectively.
Equivalently, one has
\[
\rho_{11}=\rho_{33}=\cdots=\rho_{2k+1,2k+1}=w_1R,
\qquad
\rho_{12}=\rho_{34}=\cdots=\rho_{2k+1,2k+2}=w_2R,
\]
where $w_1=(1,0), w_2=(0,1)\in\mathbb{R}^2$. Since $R$ has full row rank, the above relations impose independent
affine constraints on the row space of $B$.

Moreover, by \eqref{b_ji}, for $i\neq j$ the off-diagonal rows satisfy
\[
\rho_{ji}=-\rho_{ij}.
\]
Therefore, after removing the $l/2$ identical rows
$\rho_{2s-1,2s-1}$ and the $l/2$ identical rows $\rho_{2s-1,2s}$,
and taking into account the skew-symmetry $\rho_{ji}=-\rho_{ij}$,
the dimension of the row space is bounded by
\[
\frac{1}{2}\Bigl(\frac{l^2}{2}-l\Bigr)+2=\frac{l(l-2)}{4}+2.
\]
Hence,
\[
\mathrm{rank}(B)\le \frac{l(l-2)}{4}+2,
\]
as claimed.

By Proposition~\ref{prop:B(2,l)}, the upper bound of $\mathrm{rank}(B)$
is attained when $B=B(2,l)$.
In particular, when $l=4$, the upper and lower bounds coincide, and hence
$
\mathrm{rank}(B)=4.
$
In this case, the matrix $B(2,4)$ realizes this value.

We now turn to the characterization of the equality case
$\mathrm{rank}(B)=l$ for the lower bound when $l>4$.
Assume that $\mathrm{rank}(B)=l$ with $l\ge 5$.
By Lemma~\ref{rank(B)=l gives Cl algebra},
the matrices $\{B_{12},\dots,B_{1l}\}$ define a representation of the Clifford
algebra $C_{l-1}$ on $\mathbb{R}^l$.
In particular, $C_{l-1}$ admits a real representation on $\mathbb{R}^l$.
On the other hand, the minimal dimension of an irreducible real representation
of $C_{l-1}$ is given by $\delta(l)$
(see Table~\ref{representation of Cl}).
It follows that the condition $\mathrm{rank}(B)=l$ with $l>4$
can occur only when $l=8$.

For $l=8$, we emphasize that the present situation is different from the case
$(m,l)=(1,8)$.
In particular, Lemma~\ref{B(R)inB(R')} cannot be applied directly to relate
$\mathcal{B}(R(6,8))$ and $\mathcal{B}(R(2,8))$, since the second row of $R(2,8)$
does not coincide with that of $R(6,8)$, and hence the assumptions of
Lemma~\ref{B(R)inB(R')} are not satisfied. Let $\{P_0',\dots,P_6'\}$ be a Clifford system on $\mathbb{R}^{16}$, and define
\[
G_F'(x):=|x|^4-\sum_{\alpha=0}^{6}\langle P_\alpha' x,x\rangle^2,
\qquad
G_F''(x):=|x|^4-\sum_{\alpha=0}^{2}\langle P_\alpha' x,x\rangle^2 .
\]
As shown in Subsection~\ref{m=3,4,5,6}, one has $\mathcal{R}(G_F')=\{2\}$.
It then follows from Corollary~\ref{cor:r+m-m'} that
$
6\in\mathcal{R}(G_F'').
$
Since the Clifford systems $\{P_0,P_1,P_2\}$ and $\{P_0',P_1',P_2'\}$ are
geometrically equivalent, Lemma~\ref{R(G_F) invariant} implies that
$
\mathcal{R}(G_F)=\mathcal{R}(G_F''),
$
and hence $6\in\mathcal{R}(G_F)$.
Therefore, there exists $B\in\mathcal{B}(R)$ such that
$
\mathrm{rank}(B-R^{T}R)=6
$
by \eqref{rank(B - RTR)}.
Applying Lemma~\ref{rank(B)-m}, we obtain
\[
\mathrm{rank}(B)=\mathrm{rank}(B-R^{T}R)+\mathrm{rank}(R)=8=l.
\]
Consequently, the lower bound in \eqref{range of rank(B) of m=2} is attainable
when $l=8$.

From \eqref{rank(B - RTR)} and Lemma~\ref{rank(B)-m}, we have
\[
\mathcal{R}(G_F)
=\Big\{\mathrm{rank}\bigl(B-R^{T}R\bigr):B\in\mathcal{B}(R)\Big\}
=\{\mathrm{rank}(B)-2:B\in\mathcal{B}(R)\}.
\]
Let $r$ denote the rank of an arbitrary \emph{sos} representation of $G_F$.
Then $r\in\mathcal{R}(G_F)$ and hence
\[
l-2\le r\le \frac{l(l-2)}{4}.
\]
Moreover, the upper bound is attainable, for instance by taking $B=B(2,l)$.
Finally, the lower bound $r=l-2$ is attainable if and only if $l=4$ or $l=8$.

Combining the case-by-case analysis in Subsections~\ref{m=3,4,5,6}, \ref{sos-rank for m=1}, and~\ref{sos-rank for m=2},
the proof of Theorem~\ref{rank thm} is now complete.



\begin{thebibliography}{123}
	
\bibitem{Cecil07}
Thomas E. Cecil, Quo-Shin Chi, Gary R. Jensen.
Isoparametric hypersurfaces with four principal curvatures.
\emph{Ann. of Math.}, 166: 1--76, 2007.

\bibitem{Cecil15}
Thomas E. Cecil, Patrick J. Ryan.
\emph{Geometry of Hypersurfaces}.
Springer Monographs in Mathematics. New York: Springer, 2015.

\bibitem{Chi11}
Quo-Shin Chi.
Isoparametric hypersurfaces with four principal curvatures, II.
\emph{Nagoya Math. J.}, 204: 1--18, 2011.

\bibitem{Chi13}
Quo-Shin Chi.
Isoparametric hypersurfaces with four principal curvatures, III.
\emph{J. Differential Geom.}, 94(3): 469--504, 2013.

\bibitem{Chi20}
Quo-Shin Chi.
Isoparametric hypersurfaces with four principal curvatures, IV.
\emph{J. Differential Geom.}, 115: 225--301, 2020.

\bibitem{Fa17}
Fuquan Fang.
Dual submanifolds in rational homology spheres.
\emph{Sci. China Math.}, 60(9): 1549--1560, 2017.

\bibitem{FKM81}
Dirk Ferus, Hermann Karcher, Hans-Friedrich M\"unzner.
Cliffordalgebren und neue isoparametrische Hyperfl\"achen.
\emph{Math. Z.}, 177: 479--502, 1981.

\bibitem{GeQianTangYan25}
Jianquan Ge, Chao Qian, Zizhou Tang, Wenjiao Yan.
An overview of the development of isoparametric theory (in Chinese).
\emph{Sci. Sin. Math.}, 55: 145--168, 2025.

\bibitem{GT13}
Jianquan Ge, Zizhou Tang.
Isoparametric functions and exotic spheres.
\emph{J. Reine Angew. Math.}, 683: 161--180, 2013.

\bibitem{GT23}
Jianquan Ge, Zizhou Tang.
Isoparametric polynomials and sums of squares.
\emph{Int. Math. Res. Not. IMRN}, 24: 21226--21271, 2023.

\bibitem{Miyaoka13}
Reiko Miyaoka.
Isoparametric hypersurfaces with $(g,m)=(6,2)$.
\emph{Ann. of Math. (2)}, 177: 53--110, 2013.

\bibitem{Miyaoka16}
Reiko Miyaoka.
Errata of ``Isoparametric hypersurfaces with $(g,m)=(6,2)$''.
\emph{Ann. of Math. (2)}, 183: 1057--1071, 2016.

\bibitem{Mun}
Hans-Friedrich M\"unzner.
Isoparametrische Hyperfl\"achen in Sph\"aren, I and II.
\emph{Math. Ann.}, 251: 57--71, 1980 and 256: 215--232, 1981.

\bibitem{OT75}
Hideki Ozeki, Masaru Takeuchi.
On some types of isoparametric hypersurfaces in spheres, I and II.
\emph{Tohoku Math. J.}, 27: 515--559, 1975 and 28: 7--55, 1976.

\bibitem{Lall03}
Pablo A. Parrilo, Sanjay Lall.
Semidefinite programming relaxations and algebraic optimization in control.
\emph{European J. Control}, 9(2--3): 307--321, 2003.

\bibitem{Lall09}
Antonis Papachristodoulou, Matthew M. Peet, Sanjay Lall.
Analysis of polynomial systems with time delays via the sum of squares decomposition.
\emph{IEEE Trans. Automat. Control}, 54(5): 1058--1064, 2009.

\bibitem{QT15}
Chao Qian, Zizhou Tang.
Isoparametric functions on exotic spheres.
\emph{Adv. Math.}, 272: 611--629, 2015.

\bibitem{Solomon92}
Bruce Solomon.
Quartic isoparametric hypersurfaces and quadratic forms.
\emph{Math. Ann.}, 293(3): 387--398, 1992.

\bibitem{TangYan13}
Zizhou Tang, Wenjiao Yan.
Isoparametric foliation and Yau conjecture on the first eigenvalue.
\emph{J. Differential Geom.}, 94(3): 521--540, 2013.

\bibitem{TangXieYan14}
Zizhou Tang, Yuquan Xie, Wenjiao Yan.
Isoparametric foliation and Yau conjecture on the first eigenvalue, II.
\emph{J. Funct. Anal.}, 266: 6174--6199, 2014.

\bibitem{Wilson2021}
Robert Arnott Wilson.
On the problem of choosing subgroups of Clifford algebras for applications in fundamental physics.
\emph{Adv. Appl. Clifford Algebras}, 31: 59, 2021.


	
\end{thebibliography}
\end{document}